\providecommand{\U}[1]{\protect\rule{.1in}{.1in}}
\def\theenumi{\arabic{enumi}}
\def\theenumii{\alph{enumii}}
\def\p@enumii{\theenumi.}
\def\theenumiii{\arabic{enumiii}}
\def\p@enumiii{(\theenumi)(\theenumii)}
\def\p@enumiv{\p@enumiii.\theenumiii}
\theoremstyle{plain}
\newtheorem{theorem}{Theorem}[section]
\newtheorem{lemma}[theorem]{Lemma}
\newtheorem{proposition}[theorem]{Proposition}
\newtheorem{corollary}[theorem]{Corollary}
\newtheorem{conjecture}[theorem]{Conjecture}
\numberwithin{equation}{section}
\theoremstyle{definition}
\newtheorem{definition}[theorem]{Definition}
\newtheorem{remark}[theorem]{Remark}
\newtheorem{thmab}{Theorem}
\renewenvironment{proof}[1][\proofname]{{\bfseries #1\\}}{\qed}
\DeclareMathOperator{\Mod}{-Mod}
\DeclareMathOperator{\module}{-mod}
\DeclareMathOperator{\Hom}{Hom}
\DeclareMathOperator{\End}{End}
\DeclareMathOperator{\hd}{hd}
\DeclareMathOperator{\gd}{gd}
\DeclareMathOperator{\td}{td}
\DeclareMathOperator{\reg}{reg}
\DeclareMathOperator{\coker}{coker}
\DeclareMathOperator{\im}{im}
\DeclareMathOperator{\Ext}{Ext}
\DeclareMathOperator{\Res}{Res}
\DeclareMathOperator{\depth}{depth}
\newcommand{\as}{\text{*}}
\newcommand{\dt}{\bullet}
\newcommand{\C}{{\mathscr{C}}}
\newcommand{\Z}{{\mathbb{Z}}}
\newcommand{\N}{\mathbb{N}}
\newcommand{\FI}{{\mathscr{FI}}}
\newcommand{\mi}{\mathfrak{m}}
\newcommand{\fiHom}{\mathscr{H}om}
\newcommand{\fiExt}{{\mathscr{E}xt}}
\newcommand{\arXiv}[1]{\href{http://arxiv.org/abs/#1}{\nolinkurl{arXiv:#1}}}
\newcommand{\arXivV}[2]{\href{http://arxiv.org/abs/#1}{\nolinkurl{arXiv:#1v#2}}}
\title{On the Degree-Wise Coherence of $\FI_G$-modules}
\author{Eric Ramos}
\address{Department of Mathematics, University of Wisconsin - Madison.}
\email{eramos@math.wisc.edu}
\thanks{The author was supported by NSF-RTG grant 1502553.}
\begin{document}

\begin{abstract}
In this work we study a kind of coherence condition on $\FI_G$-modules, which generalizes the usual notion of finite generation. We prove that a module is coherent, in the appropriate sense, if and only if its generators, as well as its torsion, appears in only finitely many degrees. Using this technical result, we prove that the category of coherent $\FI_G$-modules is abelian, independent of any assumptions on the group $G$, or the coefficient ring $k$. Following this, we consider applications towards the local cohomology theory of $\FI_G$-modules, introduced by Li and the author in \cite{LR}.\\
\end{abstract}

\maketitle

\section{Introduction}

Let $\FI$ be the category whose objects are the sets $[n] := \{1,\ldots, n\}$, and whose morphisms are injections. An \textbf{$\FI$-module} over a commutative ring $k$ is a functor from the category $\FI$ to the category of $k$-modules. $\FI$-modules were first introduced by Church, Ellenberg, and Farb as a way to study stability phenomena common throughout mathematics \cite{CEF}. Following this work, representations of various other categories were studied by a large collection of authors. See \cite{W}, \cite{SS}, \cite{SS2}, \cite{PS}, for examples of this work. In this paper, we will be concerned with modules over a category which naturally generalizes $\FI$, $\FI_G$.\\

Let $G$ be a group. Then the category $\FI_G$ is that whose objects are the sets $[n]$, and whose morphisms $(f,g):[n] \rightarrow [m]$ are pairs of an injection $f$ with a map of sets $g:[n] \rightarrow G$. If $(f,g)$ and $(f',g')$ are two composable morphisms in $\FI_G$, then we define
\[
(f,g)\circ(f',g') := (f\circ f', h), \hspace{1cm} h(x) = g'(x)\cdot g(f'(x))
\]
If $G = 1$ is the trivial group, then it is easily seen that $\FI_G$ is equivalent to the category $\FI$. If, instead, we assume that $G = \Z/2\Z$, then $\FI_G$ is equivalent to the category $\FI_{BC}$ first introduced by Wilson in \cite{W}. An \textbf{$\FI_G$-module} over a commutative ring $k$ is defined in the same way as it was for $\FI$-modules. $\FI_G$-modules were first introduced by Sam and Snowden in \cite{SS2}.\\

For much of this paper, we will be concerned with the category $\FI_G\Mod$ of $\FI_G$-modules. It is immediate that $\FI_G\Mod$ is an abelian category with the usual abelian operations being computed on points. Because of its close connections with the category $k\Mod$, one may define many properties of $\FI_G$-modules which are analogous to properties of $k$-modules. One such property, which is most important to us, is finite generation. We say that an $\FI_G$-module $V$ is \textbf{finitely generated} if there exists a finite set $\{v_i\} \subseteq \sqcup_{n\geq 0} V([n])$, which no proper submodule contains. Perhaps the most significant fact about finitely generated $\FI_G$-modules is that they are often times Noetherian.\\

\begin{theorem}[Corollary 1.2.2 \cite{SS2}]
Let $G$ be a polycyclic-by-finite group, and let $k$ be a Noetherian ring. Then submodules of finitely generated $\FI_G$-modules are themselves finitely generated.\\
\end{theorem}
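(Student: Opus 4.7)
The plan is to follow the Gröbner-theoretic strategy pioneered by Sam and Snowden in \cite{SS2}. First I would reduce the theorem to the case of the representable (principal projective) $\FI_G$-modules $M(n) := k[\Hom_{\FI_G}([n], -)]$. A finitely generated $\FI_G$-module is a quotient of some $\bigoplus_{i=1}^s M(n_i)$, and since quotients and finite direct sums of Noetherian modules are Noetherian, it suffices to prove that each $M(n)$ is Noetherian.

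To prove $M(n)$ is Noetherian, I would introduce a \emph{divisibility} preorder on $\bigsqcup_m \Hom_{\FI_G}([n],[m])$, in which $(f,g) \preceq (f',g')$ whenever $(f',g')$ can be obtained from $(f,g)$ by post-composition with some $\FI_G$-morphism. Fixing a total order refining this preorder, each element of a submodule $W \subseteq M(n)$ has a well-defined leading morphism, and a standard Gröbner/division argument shows that $W$ is generated by finitely many elements provided the set of leading morphisms appearing in $W$ has only finitely many $\preceq$-minimal elements; that is, provided $\preceq$ is a well-quasi-ordering.

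The heart of the proof, and the main obstacle, is establishing this well-quasi-ordering. A morphism $(f,g)\colon [n] \to [m]$ can be encoded as a word of length $m$ over the alphabet $G \sqcup \{\star\}$: positions in $\im(f)$ receive the corresponding label from $G$, while the remaining positions receive $\star$. The divisibility preorder translates to a Higman-style subword embedding on these words, and a generalization of Higman's lemma reduces the required wqo to the existence of a wqo on $G$ that is suitably compatible with the group operation. The polycyclic-by-finite hypothesis is precisely what guarantees such an ordering: by induction along a subnormal series with infinite cyclic or finite quotients, one constructs the needed wqo on $G$, in close analogy with Hall's theorem asserting Noetherianity of $k[G]$. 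With this wqo in hand the division argument goes through, and the theorem follows.
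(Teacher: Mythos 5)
You should first note that the paper does not prove this statement at all: it is imported verbatim, with citation, from \cite{SS2}, so the only meaningful comparison is with Sam and Snowden's own argument. Your opening reduction (a finitely generated module is a quotient of a finite direct sum of principal projectives $M(n_i)$, so it suffices to show each $M(n)$ is Noetherian) is fine, and for \emph{finite} $G$ the Gr\"obner--Higman strategy you sketch is essentially the correct one: the alphabet $G \sqcup \{\star\}$ is finite, Higman's lemma gives the well-quasi-order, and the division argument closes the proof. The gap is precisely in the step you call the heart of the proof, which is where the infinite polycyclic-by-finite case actually lives.

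For infinite $G$ the problem does not reduce to a well-quasi-ordering on $G$. Under the divisibility preorder given by post-composition, any label $g$ at a position can be converted into any other label $g'$ by post-composing with an automorphism of the target (take the underlying permutation to be the identity and choose the $G$-valued function appropriately, using the composition rule $h(x) = g'(x)\cdot g(f'(x))$). Hence the induced quasi-order on morphisms forgets the $G$-labels entirely and is a wqo for \emph{every} group $G$. What actually fails is the division step: to cancel the leading term of an element of a submodule $W \subseteq M(n)$ against chosen generators, you must solve equations among coefficients lying in group algebras of the form $k[G_m]$, and the finiteness needed is that the resulting modules of leading coefficients are finitely generated. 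That is a ring-theoretic input --- Hall's theorem that $k[G]$, and hence $k[G_n]$, is Noetherian when $G$ is polycyclic-by-finite --- and no ordering of basis vectors can substitute for it, since an ordering cannot see $k$-linear combinations. A concrete test: take $k$ a field and $G = \mathbb{Q}$. Your preorder on morphisms is still a wqo, yet $M(1)$ is not Noetherian: if $I \subseteq k[\mathbb{Q}] \cong M(1)_1$ is a non-finitely-generated ideal (the augmentation ideal works), the $\FI_G$-submodule it generates has degree-one piece equal to $I$ and, since $M(1)_0 = 0$, cannot be finitely generated. So as written your key step would ``prove'' Noetherianity for every group, which is false. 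A correct argument must feed Noetherianity of the group algebras into the machinery --- either by running the Gr\"obner formalism with coefficient modules over these Noetherian rings rather than with monomials alone, or, as Sam and Snowden do, by an induction along a subnormal series of $G$ in the style of the skew-polynomial proof of Hall's theorem, with the Higman-type combinatorics of injections layered on top.
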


Note that another way of thinking of the above theorem is that the category $\FI_G\module$ of finitely generated $\FI_G$-modules is abelian under sufficient restrictions on $k$ and $G$. The hypotheses of the above theorem are currently the most general known. It is conjectured that $G$ being polycyclic-by-finite is also necessary for the Noetherian property to hold \cite{SS2}. One of the main goals of this paper is to argue that many theoretical constructions in the theory of $\FI_G$-modules can actually be done independent of the Noetherian property. Instead, we argue that \textbf{degree-wise coherence} is often sufficient.\\

We say that an $\FI_G$-module is degree-wise coherent if there is a set (not necessarily finite) $\{v_i\} \subseteq \sqcup_{n\geq 0} V([n])$ such that:
\begin{enumerate}
\item no proper submodule contains $\{v_i\}$, and there is some $N \gg 0$ such that $\{v_i\} \subseteq \sqcup_{n = 0}^{N}V([n])$. In this case we say that $V$ is \textbf{generated in finite degree};
\item the module of relations between the elements $\{v_i\}$ is itself generated in finite degree (see Definition \ref{pres}).\\
\end{enumerate}

One can think about the above definition in the following way. Instead of requiring that our module have finitely many generators, we only require that it admits a generating set whose elements appear in at most finitely many degrees. In addition, we also require that these generators have relations which are bounded in a similar sense. The significance of this condition traces its origins to the paper \cite{CE}, although they do not use the same terminology. Following this work, degree-wise coherent modules were studied more deeply by the author in \cite{R}. The first goal of this paper will be to understand the connection between being degree-wise coherent, and having finite torsion.\\

We say an element $v \in V([n])$ is \textbf{torsion} if there is some morphisms $(f,g):[n] \rightarrow [m]$ in $\FI_G$, such that $V(f,g)(v) = 0$. The \textbf{torsion degree} of an $\FI_G$-module is the quantity,
\[
\td(V) := \sup\{n \mid V_n \text{ contains a torsion element.}\}
\]
It was first observed by Church and Ellenberg that degree-wise coherent $\FI$-modules will necessarily have finite torsion degree \cite[Theorem D]{CE}. It was then later shown by the author that the same statement was true for $\FI_G$-modules \cite[Theorem 3.19]{R}. More recently, Li has conjectured that the converse of this statement was true as well \cite{L3}. In this paper, we will prove this conjecture in the affirmative.\\

\begin{thmab}
Let $G$ be a group, and let $k$ be a commutative ring. If $V$ is an $\FI_G$-module which is generated in finite degree, then $V$ is degree-wise coherent if and only if $\td(V) < \infty$.\\
\end{thmab}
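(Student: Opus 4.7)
The forward implication is \cite[Theorem 3.19]{R}, so it remains to establish the converse: assuming $V$ is generated in finite degree with $\td(V) < \infty$, show that $V$ is degree-wise coherent. My plan is to decompose $V$ via the short exact sequence
\[
0 \to V_T \to V \to V/V_T \to 0,
\]
where $V_T \subseteq V$ is the torsion subfunctor. Since $\td(V) < \infty$ forces $V_T([n]) = 0$ for $n > \td(V)$, the module $V_T$ is supported in only finitely many degrees; meanwhile $V/V_T$ is torsion-free and still generated in finite degree. The proof then reduces to three sub-results: (a) any $\FI_G$-module supported in finitely many degrees is degree-wise coherent; (b) any torsion-free $\FI_G$-module generated in finite degree is degree-wise coherent; and (c) degree-wise coherence is closed under extensions.

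Part (a) is established directly. Given $U$ with $U([n]) = 0$ for $n > N$, take any surjection $F \twoheadrightarrow U$ from a free $\FI_G$-module $F$ generated in degrees $\leq N$, and let $K$ denote the kernel. Then $K([n]) = F([n])$ for all $n > N$. Since every $\FI_G$-morphism $[i] \to [n]$ with $i \leq N < n$ admits a factorization through some intermediate $[N+1]$, and because $F([N+1]) = K([N+1])$, the $\FI_G$-submodule generated by $K$ in degrees $\leq N+1$ already exhausts $K$ in all higher degrees; together with the kernels of $F([n]) \twoheadrightarrow U([n])$ for $n \leq N$, this shows $K$ is generated in degrees $\leq N+1$. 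Part (c) is a routine diagram chase: for $0 \to A \to B \to C \to 0$ with $A$ and $C$ degree-wise coherent, the long exact sequence obtained from the derived functors of the generator functor $H_0$ bounds both the generation and relation degrees of $B$ in terms of those of $A$ and $C$.

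The main obstacle is part (b). My plan is to induct on the generation degree $d$ of a torsion-free $W$. The base case $d = 0$ is immediate, since a torsion-free $W$ generated in degree $0$ is necessarily of the form $M(W([0]))$, hence free and trivially degree-wise coherent. For the inductive step, I would deploy the shift and derivative functors, with $(\tau W)([n]) := W([n+1])$ and $\Delta W := \tau W / W$. Torsion-freeness of $W$ ensures that the natural map $W \hookrightarrow \tau W$ is injective, producing the short exact sequence
\[
0 \to W \to \tau W \to \Delta W \to 0,
\]
in which $\Delta W$ has strictly smaller generation degree than $W$. Controlling the torsion degree of $\Delta W$ via the techniques of \cite{R} and the local cohomology framework of \cite{LR} lets one apply the inductive hypothesis to $\Delta W$. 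The subtle point, which I anticipate to be the technical heart of the argument, is that $\tau W$ typically retains the same generation degree as $W$ rather than a strictly smaller one, so a naive induction on $d$ alone does not close; overcoming this will require either a joint induction on the pair (generation degree, torsion degree) or a finer structural comparison between the syzygies of $W$ and those of $\Delta W$, after which (c) transfers degree-wise coherence from the end terms back to $W$.
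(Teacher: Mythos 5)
Your overall skeleton matches the paper's proof: the forward direction is indeed the known result, the reduction to the torsion-free case via $0 \to V_T \to V \to V/V_T \to 0$ is the paper's first step (your part (a) is a correct, explicit proof of the paper's claim that $\deg(H_1(V_T)) < \infty$, and your part (c) is Proposition \ref{needtwo}), the base case of (b) is fine, and the inductive setup $0 \to W \to \Sigma W \to DW \to 0$ with $\gd(DW) < \gd(W)$ and $\td(DW) < \infty$ (Proposition \ref{dervtor}) is exactly what the paper uses.

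However, the step you defer as ``the technical heart'' is precisely where the proof actually lives, and it is left genuinely open in your proposal. To transfer coherence from $DW$ back to $W$ via your part (c) you would need $\hd_1(\Sigma W) < \infty$, i.e.\ coherence of $\Sigma W$ --- but $\Sigma W$ is again torsion-free with the same generating degree as $W$, so neither of your suggested fixes closes the loop: in a joint induction on (generating degree, torsion degree) no parameter drops for $\Sigma W$, and the needed bound on $\hd_1(\Sigma W)$ is essentially the statement being proved. The paper's resolution is different: since $W$ is torsion-free, $H_1^D(W) = 0$, so the Grothendieck spectral sequence for $D^b \circ D$ degenerates and yields $H_i^{D^{b+1}}(W) \cong H_i^{D^b}(DW)$; coherence of $DW$ then forces $\deg(H_i^{D^b}(W)) < \infty$ for all $i,b$ (Proposition \ref{dervdervprop}), and Theorem \ref{dreg} converts this into the Nagpal-type statement that $\Sigma_{N(W)}W$ is $\sharp$-filtered for a finite $N(W)$. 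Finally, the homology sequence of $0 \to W \to \Sigma_{N(W)}W \to D_{N(W)}W \to 0$, together with $H_1(\Sigma_{N(W)}W) = 0$ (Theorem \ref{homacyclic}) and $\hd_2(D_{N(W)}W) < \infty$ (finite regularity, Theorem \ref{finreg}, applied to the coherent module $D_{N(W)}W$), gives $\hd_1(W) \leq \hd_2(D_{N(W)}W) < \infty$. Without this input from Theorems \ref{nagpal}/\ref{dreg} (or some substitute for it), your inductive step does not go through.
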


As a first application of the above technical theorem, we will be able to show that degree-wise coherent modules form an abelian category.\\

\begin{thmab}\label{abel}
Let $G$ be a group, and $k$ a commutative ring. Then the category $\FI_G\Mod^{coh}$ of degree-wise coherent modules is abelian.\\
\end{thmab}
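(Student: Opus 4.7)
The plan is to leverage Theorem A, which characterizes degree-wise coherence as the conjunction of being generated in finite degree and having finite torsion degree. Given a morphism $f : V \to W$ of degree-wise coherent $\FI_G$-modules, I will verify that $\im f$, $\ker f$, and $\coker f$ are each degree-wise coherent; since $\FI_G\Mod$ is already abelian, this is enough.

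The image is handled immediately: as a quotient of $V$ it is generated in finite degree, and as a submodule of $W$ it has $\td(\im f) \leq \td(W) < \infty$, so Theorem A applies. For the kernel, the containment $\ker f \subseteq V$ bounds $\td(\ker f) \leq \td(V) < \infty$, so again by Theorem A it suffices to prove that $\ker f$ is generated in finite degree. I would fix a surjection $\pi : P \twoheadrightarrow V$ from a free $\FI_G$-module $P$ generated in finite degree; the composite $f\circ\pi : P \to \im f$ is then a surjection from such a free module onto a coherent module, and a Schanuel's-lemma argument shows that its kernel $K = \pi^{-1}(\ker f)$ is generated in finite degree. Since $\ker\pi \subseteq K$ with $\ker f \cong K/\ker\pi$, the kernel of $f$ is generated in finite degree as a quotient of $K$.

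For the cokernel, choose a surjection $\pi : P \twoheadrightarrow W$ with $P$ free and generated in finite degree. The composite $P \twoheadrightarrow W \twoheadrightarrow \coker f$ has kernel $K' = \pi^{-1}(\im f)$, which sits in a short exact sequence $0 \to \ker\pi \to K' \to \im f \to 0$. Coherence of $W$ forces $\ker\pi$ to be generated in finite degree, $\im f$ is generated in finite degree by the first step, and generation in finite degree is preserved under extensions, so $K'$ is generated in finite degree. The presentation $K' \to P \to \coker f \to 0$ then displays $\coker f$ as generated in finite degree with relations generated in finite degree, fulfilling the definition directly.

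The only nonformal ingredient is the presentation-independence of coherence used in the two kernel-based steps, namely that for any coherent $V$ and any surjection from a free module generated in finite degree, the kernel is also generated in finite degree. This I would deduce from the classical Schanuel lemma, comparing an arbitrary such cover against one witnessing the coherence of $V$. I expect this to be the most delicate bookkeeping in the proof, but it is not the conceptual hard part; the substantive content of the theorem has already been absorbed into Theorem A.
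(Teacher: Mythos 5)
Your treatment of the image coincides with the paper's: as a quotient of $V$ it is generated in finite degree, as a submodule of $W$ it has finite torsion degree, and Theorem \ref{altchar} (Theorem A) concludes. Where you genuinely diverge is in the kernel and cokernel. The paper proves a two-out-of-three statement for short exact sequences (Proposition \ref{needtwo}), using the long exact sequence of the homology functors $H_i$ together with the finiteness of regularity (Theorem \ref{finreg}) to control $\hd_2$ of the quotient term, and then applies it to $0\to\ker\phi\to V\to\im\phi\to 0$ and $0\to\im\phi\to V'\to\coker\phi\to 0$. You instead argue through explicit free covers and a presentation-independence claim: for a degree-wise coherent module, the kernel of any surjection from a free module generated in finite degree is again generated in finite degree. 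Your route is correct and has the virtue of never invoking the regularity bound, at the price of that lemma, which you rightly flag as the delicate point. One caveat there: Schanuel's lemma is not quite the right tool, because free $\FI_G$-modules $M(W)$ need not be projective (the paper notes this explicitly), and the presentation witnessing coherence may have a non-projective $\sharp$-filtered middle term. The clean fix is already in the paper's remark after the definition of the homology functors: $V$ is degree-wise coherent iff $\gd(V)<\infty$ and $\hd_1(V)<\infty$, and then for any surjection $P\to V$ with $P$ free (or $\sharp$-filtered) of finite generating degree, the exact sequence $0=H_1(P)\to H_1(V)\to H_0(K)\to H_0(P)$ yields $\gd(K)\le\max\{\hd_1(V),\gd(P)\}<\infty$. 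With that substitution (or by restricting your covers to sums of the principal projectives $M(r)$ and first producing one projective presentation whose kernel has finite generating degree), your argument is complete, and it stands as a slightly more hands-on alternative to the paper's homological two-out-of-three lemma.
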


This theorem was recently proven independently by Li in his note \cite[Proposition 3.4]{L3}. One immediately sees that the above theorem is independent of the ring $k$, as well as the group $G$. As stated previously, working in the category $\FI_G\Mod^{coh}$ often has benefits which the category $\FI_G\module$ does not permit. Perhaps the most explicit of these benefits is the existence of infinite shifts, which we discuss below. Of course, one should note that there are also benefits which are exclusive to finitely generated modules. The most obvious of these is the ability to do explicit computations.\\

Much of the remainder of the paper is dedicated to showing how well known theorems about finitely generated $\FI_G$-modules will continue to hold in the category $\FI_G\Mod^{coh}$. In particular, we focus on generalizing the local cohomology theory of $\FI_G$-modules, introduced by Li and the author in \cite{LR}.\\

If $V$ is an $\FI_G$-module, then the \textbf{0-th local cohomology functor} is defined by
\[
H^0_\mi(V) := \text{ the maximal torsion submodule of $V$.}
\]
$H^0_\mi$ is a left exact functor, and we denote its derived functors by $H^i_\mi$. Section \ref{locsection} is largely dedicated to arguing that the theorems of \cite{LR} will continue to hold in $\FI_G\Mod^{coh}$. One of the main results of \cite{LR}, is that whenever $V$ is finitely generated there is a complex $\C^\dt V$ which computes $H^i_\mi$ (see Definition \ref{nonfunctcomplex}). One problem with this complex, is that it's not functoral in $V$. Allowing ourselves to work in the category $\FI_G\Mod^{coh}$, we can fix this issue using the infinite shift.\\

Let $\iota:\FI_G \rightarrow \FI_G$ be the functor defined by the assignments,
\[
\iota([n]) = [n+1], \hspace{1cm} \iota((f,g):[n] \rightarrow [m]) = (f_+,g_+)
\]
where
\[
f_+(x) = \begin{cases} f(x) &\text{ if $x < n+1$}\\ m+1 &\text{ otherwise}\end{cases}, \hspace{1cm} g_+(x) = \begin{cases} g(x) &\text{ if $x < n+1$}\\ 1 &\text{ otherwise.}\end{cases}
\]
The shift functor $\Sigma$ is defined to be
\[
\Sigma(V) := V \circ \iota.
\]
We write $\Sigma_b$ to denote the $b$-th iterate of $\Sigma$. In Section \ref{shift}, it is show that there is a commutative diagram for all $b \geq 1$,
\[
\begin{CD}
V @>>> \Sigma_{b+1}\\
@| @AAA\\
V @>>> \Sigma_{b}
\end{CD}
\]
The \textbf{infinite shift} $\Sigma_\infty$ is the directed limit of the right column of this diagram. That is,
\[
\Sigma_\infty V := \lim_{\rightarrow} \Sigma_b V.
\]
The collection of maps $V \rightarrow \Sigma_b$ in the above diagram induce a morphism $V \rightarrow \Sigma_\infty V$. The \textbf{infinite derivative} is defined to be the cokernel of this map
\[
D_\infty V := \coker(V \rightarrow \Sigma_\infty V).
\]
One should observe that is rarely ever the case that the infinite derivative or the infinite shift are finitely generated. We will see, however, that if $V$ is degree-wise coherent, then the same is true of both $\Sigma_\infty V$ and $D_\infty V$. It is shown in Section \ref{infderv} that the infinite derivative functor is right exact. We use $H_i^{D_\infty^b}$ to denote the $i$-th left derived functor of the $b$-th iterate of $D_\infty$. The main result of the final section of the paper is the following.\\

\begin{thmab}
Let $V$ be a degree-wise coherent $\FI_G$-module of dimension $d < \infty$ (see Definition \ref{dim}). Then there are isomorphisms for all $i \geq 1$,
\[
H_i^{D_\infty^{d+1}}(V) \cong H^{d+1-i}_\mi(V).
\]
\text{}\\
\end{thmab}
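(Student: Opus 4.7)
The plan is to compute the left derived functors of $D_\infty^{d+1}$ by iterating a one-step dimension shift between $D_\infty$ and $H^\bullet_\mi$. Since an element of $V$ is torsion precisely when some iterate of $\iota$ annihilates it, the kernel of $V \to \Sigma_\infty V$ is $H^0_\mi(V)$, producing the functorial four-term exact sequence
\[
0 \to H^0_\mi(V) \to V \to \Sigma_\infty V \to D_\infty V \to 0.
\]
The functor $\Sigma_\infty$ is exact as a directed colimit of the exact functors $\Sigma_b$, so this sequence will drive everything that follows, at both the module level and the level of derived functors.

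The first concrete step is to prove the vanishing $H^i_\mi(\Sigma_\infty V) = 0$ for all $i \geq 0$ whenever $V$ is degree-wise coherent. For $i = 0$ this follows from the exact sequence above, once one notes that $\Sigma_b V$ is torsion-free for $b > \td(V) < \infty$ (finite by Theorem A) and that torsion-freeness passes to the directed colimit. For $i \geq 1$ I would combine the shift-vanishing theorems from the finitely generated theory of \cite{LR} with the finite torsion degree and then commute $H^i_\mi$ past the colimit defining $\Sigma_\infty$. Splitting the four-term sequence at $V/H^0_\mi V$ into two short exact sequences and taking the long exact sequences of $H^\bullet_\mi$ then yields the dimension-shift identity
\[
H^i_\mi(D_\infty V) \cong H^{i+1}_\mi(V) \qquad (i \geq 0).
\]
A parallel calculation using a projective resolution $P_\bullet \to V$, together with the fact that projectives are torsion-free so that $0 \to P_\bullet \to \Sigma_\infty P_\bullet \to D_\infty P_\bullet \to 0$ is a short exact sequence of complexes whose outer terms are acyclic in positive degrees, produces
\[
H_1^{D_\infty}(V) \cong H^0_\mi(V), \qquad H_i^{D_\infty}(V) = 0 \text{ for } i \geq 2.
\]
Thus $D_\infty$ is right exact of homological dimension one, and its only nontrivial higher derived functor is $H^0_\mi$.

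To conclude, I would induct on $b$ via the Grothendieck spectral sequence
\[
E^2_{p,q} = H_p^{D_\infty}\bigl(H_q^{D_\infty^b}(V)\bigr) \Longrightarrow H_{p+q}^{D_\infty^{b+1}}(V),
\]
whose applicability rests on $D_\infty^b$ sending projectives to torsion-free (hence $D_\infty$-acyclic) modules; this holds because $H^0_\mi(D_\infty^b P) \cong H^b_\mi P$, which vanishes for free $\FI_G$-modules. By induction, $H_q^{D_\infty^b}(V)$ equals $D_\infty^b V$ for $q = 0$ and the torsion module $H^{b-q}_\mi V$ for $1 \leq q \leq b$, with the rest vanishing. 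Feeding this into the previous paragraph, and using the dimension shift to compute $H_1^{D_\infty}(D_\infty^b V) \cong H^0_\mi(D_\infty^b V) \cong H^b_\mi V$, the only nonzero entries of the $E^2$-page are $E^2_{0,0} = D_\infty^{b+1} V$ and $E^2_{1,q} \cong H^{b-q}_\mi V$ for $0 \leq q \leq b$. Being supported on only two columns, all higher differentials vanish and $E^\infty = E^2$, giving $H_i^{D_\infty^{b+1}}(V) \cong H^{b+1-i}_\mi V$ for $1 \leq i \leq b+1$. Taking $b = d$ then yields the theorem. The main obstacle is the vanishing $H^i_\mi(\Sigma_\infty V) = 0$ for $i \geq 1$: without a Noetherian hypothesis one cannot simply reduce to a finitely generated submodule, so exchanging $H^i_\mi$ with the directed colimit defining $\Sigma_\infty$ is precisely where the degree-wise coherence machinery of Theorem A becomes essential.
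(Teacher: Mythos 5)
Your overall architecture is sound and runs close to the paper's: the four-term sequence $0 \to H^0_\mi(V) \to V \to \Sigma_\infty V \to D_\infty V \to 0$, the identifications $H_1^{D_\infty}(V) \cong \ker(V \to \Sigma_\infty V) = H^0_\mi(V)$ and $H_i^{D_\infty} = 0$ for $i \geq 2$, and the two-column Grothendieck spectral sequence for $D_\infty \circ D_\infty^{b}$ are exactly the content of Propositions \ref{infprop}, \ref{infdervdervprop} and \ref{induction}. Where the paper routes the comparison with local cohomology through the functorial complex $\C_\infty^\dt V$ (Theorem \ref{cohomologycoh}), you instead prove the dimension shift $H^i_\mi(D_\infty V) \cong H^{i+1}_\mi(V)$ directly and induct; that is a legitimate repackaging of the same computation, and your bookkeeping of the $E^2$-page and its degeneration is correct.

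The genuine gap is the step you yourself flag: the vanishing $H^i_\mi(\Sigma_\infty V) = 0$ for $i \geq 1$. Your proposed justification---import the shift-acyclicity statements from the finitely generated theory of \cite{LR} and commute $H^i_\mi$ past the colimit defining $\Sigma_\infty$---does not work at the level of generality of the theorem. The results of \cite{LR} are proved for finitely generated modules with $G$ finite and $k$ Noetherian, which are precisely the hypotheses this theorem is meant to dispense with, and the commutation of $H^i_\mi$ (equivalently, of the functors $\fiExt^i(k\FI_G/\mi^n,-)$) with filtered colimits is neither established in the paper nor obvious here, since $M(r)/\mi^n M(r)$ need not be a compact object when $k$ is non-Noetherian or $G$ is infinite; and even granting the commutation you would still owe the vanishing $H^i_\mi(\Sigma_b V)=0$ for large $b$ in this generality. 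The needed statement follows instead from tools already in the paper, with no colimit interchange: by Theorem \ref{nagpal} the module $\Sigma_{N(V)}V$ is $\sharp$-filtered, and since $\Sigma_\infty V \cong \Sigma_\infty(\Sigma_{N(V)}V)$ and $\Sigma_\infty$ preserves $\sharp$-filtered modules, Proposition \ref{infprop} gives that $\Sigma_\infty V$ is itself $\sharp$-filtered; Proposition \ref{acyclic} then yields $H^i_\mi(\Sigma_\infty V) = 0$ for $i \geq 1$, while torsion-freeness of $\sharp$-filtered modules handles $i = 0$. With that substitution your dimension shift, and hence the induction, goes through; the remaining inputs are fine as you state them (torsion modules of finite degree sitting inside a coherent module are degree-wise coherent and $H_\mi$-acyclic by Propositions \ref{coh} and \ref{acyclic}, and for a projective $P$ the module $D_\infty^b P$ is free by Proposition \ref{infprop}, hence torsion-free and $D_\infty$-acyclic without any appeal to local cohomology).
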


One can think of the above theorem as a kind of local duality for $\FI_G$-modules, in so far as it describes the equivalence of local cohomology with the derived functors of some right exact functor. We have already discussed the fact that the functor $D_\infty$ does not exist within the category of finitely generated modules, and therefore the above represents a means of uniformly describing local cohomology modules in a way which is inaccessible by simply working with finitely generated modules.\\

\section*{Acknowledgments}
The author would like to send thanks to Liping Li, Rohit Nagpal, and Andrew Snowden for the many conversations which inspired this work. The author would also like the acknowlege the generous support of the National Science Foundation through NSF-RTG grant 1502553.\\
\section{Preliminaries}

\subsection{Elementary Definitions}

Let $G$ be a group, and let $k$ be a commutative ring.\\

\begin{definition}
The category $\FI_G$ is that whose objects are the finite sets $[n] := \{1,\ldots,n\}$, and whose morphisms are pairs $(f,g):[n] \rightarrow [m]$, where $f:[n] \rightarrow [m]$ is an injection of sets and $g:[n] \rightarrow G$ is a map of sets. For two composable morphisms $(f,g),(h,g')$, we define
\[
(f,g) \circ (h,g') := (f \circ h,g'')
\]
where $g''(x) = g'(x)\cdot g(h(x))$. For each non-negative integer $n$, we denote the group of endomorphisms $\End_{\FI_G}([n]) = \mathfrak{S}_n \wr G$ by $G_n$.\\

An \textbf{$\FI_G$-module} over $k$ is a covariant functor $V:\FI_G \rightarrow k\Mod$. We use $V_n$ to denote the $k$-module $V([n])$. For any $\FI_G$-morphism $(f,g):[n] \rightarrow [m]$ we write $(f,g)_\as$ for the map $V(f,g)$. We call these maps the \textbf{induced maps} of $V$, and in the case where $n < m$ we say that $(f,g)_\as$ is a \textbf{transition map} of $V$.\\

Given any $\FI_G$-module $V$, its \textbf{degree} is the quantity,
\[
\deg(V) := \sup\{n \mid V_n \neq 0\} \in \N \cup \{\pm \infty\}
\]
where we use the convention that the supremum of the empty set is $-\infty$.\\
\end{definition}

We note that the category of $\FI_G$-modules and natural transformations $\FI\Mod$ is abelian. Indeed, one computes kernels and cokernels in a pointwise fashion. One nice feature of $\FI_G$-modules is that many properties of $k$-modules have natural analogs. Perhaps the most significant of these properties is finite generation.\\

\begin{definition}
Let $V$ be an $\FI_G$-module. We say that $V$ is \textbf{finitely generated} if there is a finite collection $S \subseteq \sqcup_{n \geq 0} V_n$ which no proper submodule of $V$ contains. We denote the category of finitely generated $\FI_G$-modules by $\FI_G\module$.\\
\end{definition}

Finitely generated $\FI_G$-modules were first studied by Sam and Snowden in \cite{SS2}. Prior to this, the case wherein $G = 1$ was studied by Church, Ellenberg, Farb, and Nagpal in \cite{CEF}, and \cite{CEFN}. This case was also featured prominently in the work of Sam and Snowden \cite{SS3}. We note that Church, Ellenberg, Farb, and Nagpal refer to these modules as being $\FI$-modules. The case wherein $G = \Z/2\Z$ was studied by Wilson in \cite{W}. Wilson refers to these modules as being $\FI_{BC}$-modules.\\

\begin{theorem}[Corollary 1.2.2 \cite{SS2}]
Assume that $G$ is a polycyclic-by-finite group, and that $k$ is a Noetherian ring. Then the category $\FI_G\module$ is abelian. That is, submodules of finitely generated modules are finitely generated.\\
\end{theorem}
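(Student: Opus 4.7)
The plan is to reduce the claim to Noetherianity of the principal projectives $M(n) := k\{\Hom_{\FI_G}([n], -)\}$ and then prove the latter via a Gr\"obner-basis argument in the spirit of Sam-Snowden. Since every finitely generated $\FI_G$-module is a quotient of a finite direct sum of the $M(n)$, it suffices to show that each $M(n)$ is itself Noetherian as an $\FI_G$-module.

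To set up combinatorics, I would encode a morphism $(f,g) : [n] \to [m]$ as a word of length $m$ over the alphabet $A := ([n] \times G) \sqcup \{\as\}$: position $i$ receives the symbol $(f^{-1}(i), g(f^{-1}(i)))$ if $i \in \im(f)$ and $\as$ otherwise, subject to the constraint that each element of $[n]$ appears in at most one position. This identifies $M(n)_m$ with a $k$-submodule of the free $k$-module on such words, and the $\FI_G$-induced maps correspond to inserting $\as$'s into words and post-multiplying the $G$-labels by group elements. Equipping the set of words with the embedding preorder ($w \leq w'$ if $w'$ is obtained from $w$ by inserting $\as$'s) and extending to a monomial order, I would define initial submodules of $M(n)$ and reduce to showing that monomial submodules are finitely generated.

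To conclude, I would analyze an ascending chain $V_1 \subseteq V_2 \subseteq \cdots$ in $M(n)$ by decomposing according to \emph{shape} (a partial injection $[n] \rightharpoonup [m]$). For each shape $\sigma$, the leading $G$-tuples appearing in $V_i$ at shape $\sigma$ form an ascending chain of submodules of a free $k[G^n]$-module. Hall's classical theorem guarantees that $k[G]$, and hence $k[G^n]$, is Noetherian whenever $G$ is polycyclic-by-finite and $k$ is Noetherian, so this chain stabilizes for each fixed $\sigma$. Higman's lemma, applied to the embedding preorder on words, ensures that only finitely many shape classes actually need to be tracked -- upward-closed sets of words are determined by their finitely many minima -- and the combination forces the original chain to stabilize.

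The principal obstacle is coordinating the combinatorial stabilization from Higman with the algebraic stabilization from Hall. A monomial order must be chosen so that initial terms are simultaneously stable under the $\FI_G$-action and well-behaved with respect to the embedding order on shapes; otherwise leading-term computations may not descend cleanly to the shape-by-shape analysis above. This is precisely where the polycyclic-by-finite hypothesis becomes indispensable: without Noetherianity of $k[G]$ one could not rule out an infinite strictly ascending sequence of distinct $G$-tuples within a single shape class, and no amount of purely combinatorial control of shapes could then prevent the ambient chain from failing to stabilize.
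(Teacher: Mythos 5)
This statement is not proved in the paper at all: it is imported verbatim from Sam--Snowden \cite{SS2} (their Corollary 1.2.2), so the relevant comparison is with their argument rather than with anything in the present text. Your overall reduction (Noetherianity of $\FI_G\module$ follows from Noetherianity of each principal projective $M(n)$, whose degree-$m$ piece splits over ``shapes'' as a direct sum of copies of $k[G^n]$) is fine and is indeed how one begins; the inputs you name, Hall's theorem that $k[G^n]$ is Noetherian for $G$ polycyclic-by-finite and Higman's lemma for the subword order, are also the right circle of ideas. But the step you yourself flag as ``the principal obstacle'' is a genuine gap, not a technicality, and as written the argument does not close.

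Concretely: Higman's lemma requires a well-quasi-ordered alphabet, and your alphabet $([n]\times G)\sqcup\{\as\}$ is infinite with no useful quasi-order on the $G$-labels, so there is no monomial/embedding order on the full set of decorated words for which upward-closed sets have finitely many minima; this is exactly why the Sam--Snowden Gr\"obner machinery applies to $\FI_G$ only when $G$ is finite. Splitting off the $G$-data shape by shape does not repair this. For a fixed shape the relevant data is a right ideal of $k[G^n]$, and Hall gives stabilization of each per-shape chain, but the transition maps between shapes translate the $G^n$-coordinates by arbitrary group elements, so ``the chain stabilizes for each fixed shape'' gives no uniform bound: new generators can keep appearing at ever larger shapes, with per-shape stabilization degrees growing without bound. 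To rule this out you would need an equivariant finiteness statement of the form ``finitely many shapes generate all per-shape ideals under the twisted transition maps,'' and that is essentially the theorem being proved, not something Higman on bare shapes plus Hall per shape delivers. Sam and Snowden avoid this circle by a different mechanism for infinite $G$: an algebraic induction along a subnormal series of $G$ with cyclic or finite quotients, in the spirit of the Hilbert basis theorem and of Hall's own proof that $k[G]$ is Noetherian, rather than a shape-by-shape Gr\"obner argument. So your proposal needs either that inductive structure on $G$ or a genuinely new idea at the coordination step; as it stands the conclusion does not follow.
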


One should observe the two hypotheses of the above theorem. In this paper we will not be studying finitely generated $\FI_G$-modules, instead focusing on degree-wise coherent modules (see Definition \ref{pres}). Working with these more general modules will allow us to prove many theorems without needing to restrict the ring $k$ or the group $G$. One goal of this paper is to argue that degree-wise coherence is a more natural condition than finite generation in many contexts.\\

\begin{definition}\label{pres}
Let $r \geq 0$ be an integer. The \textbf{principal projective $\FI_G$-module generated in degree $r$} $M(r)$ is defined on points by
\[
M(r)_n := k[\Hom_{\FI_G}([r],[n])],
\]
where $k[\Hom_{\FI_G}([r],[n])]$ is the free $k$-module with basis labeled by the set $\Hom_{\FI_G}([r],[n])$. The induced maps of this module act by composition on the basis vectors. More generally, if $W$ is a $kG_r$-module, then we define the \textbf{free $\FI_G$-module relative to $W$} $M(W)$ by the assignments
\[
M(W)_n := k[\Hom_{\FI_G}([r],[n])] \otimes_{kG_r} W.
\]
The induced maps of this module act by composition in the first component. In this case, we say that $M(W)$ is generated in degree $r$. Direct sums of modules of either of these two types will generally be referred to as \textbf{free modules}. The \textbf{generating degree} of a free module is the supremum of the generating degrees of its free summands.\\

We say that a module $V$ is \textbf{$\sharp$-filtered} if it admits a finite filtration
\[
0 = V^{(-1)} \subseteq \ldots \subseteq V^{(n)} = V.
\]
such that $V^{(i)}/V^{(i-1)}$ is a free module for each $i$. In this case, the integer $n$ is called the generating degree of $V$.\\

A \textbf{presentation} for a module $V$ is an exact sequence of the form,
\[
0 \rightarrow K \rightarrow F \rightarrow V \rightarrow 0,
\]
where $F$ is a free-module. If $F$ is $\sharp$-filtered with generating degree $n$, then we say that $V$ is \textbf{generated in degree $\leq n$}. If, in addition, $K$ is generated in finite degree, then we say that $V$ is \textbf{degree-wise coherent}. We denote the category of modules which are generated in finite degree by $\FI_G\Mod^{coh}$.\\
\end{definition}

Note that free modules are not always projective, although projective modules are always free. Indeed, it can be shown that for a $kG_r$-module $W$, $M(W)$ is projective as an $\FI_G$-module if and only if $W$ is projective as a $kG_r$-module. Proofs of these facts can be found in \cite{R}.

\subsection{The homology functors and regularity}

\begin{definition}
Let $V$ be an $\FI_G$-module. Then the \textbf{0-th homology functor} is defined on points by
\[
H_0(V)_n := V_n / V_{<n},
\]
where $V_{<n}$ is the submodule of $V_n$ spanned by the images of all transition maps into $V_n$. We write $H_i$ to denote the $i$-th derived functor of $H_0$.\\

The \textbf{$i$}-th homological degree of a module $V$ is the quantity
\[
\hd_i(V) := \deg(H_i(V)) \in \N \cup \{\pm \infty\}.
\]
the 0-th homological degree $\hd_0(V)$ will be referred to as the \textbf{generating degree} of the module, and is denoted by $\gd(V)$. The \textbf{regularity} of a module $V$ is 
\[
\reg(V) := \inf\{N \mid \hd_i(V) - i \leq N \forall i \geq 1\} \in \N \cup \{\pm \infty\}.
\]
\text{}\\
\end{definition}

\begin{remark}
Note that in the above definition, regularity is computed using strictly positive homological degrees. This is slightly different from how regularity is defined in classical commutative algebra. When we discuss local cohomology later in this paper, it will be explained why the above definition was chosen.\\

It is an easy check to show that the definition of $\gd(V)$ given above agrees with the notion of generating degree given in Definition \ref{pres}. It is also important that one notes the connection between the module of relations of $V$, and the first homological degree $hd_1(V)$. Given a presentation,
\[
0 \rightarrow K \rightarrow F \rightarrow V \rightarrow 0
\]
we may apply the homology functor to find,
\[
hd_1(V) \leq gd(K) \leq \max\{gd(V),hd_1(V)\}.
\]
In particular, $V$ is degree-wise coherent if and only if both $gd(V)$ and $hd_1(V)$ are finite.\\

If $V$ is acyclic with respect to the homology functors, then we define its regularity to be $-\infty$.\\
\end{remark}

The regularity of $\FI$-modules was first studied by Sam and Snowden in \cite[Corollary 6.3.5]{SS3}, in the case where $k$ is a field of characteristic 0. Following this, Church and Ellenberg provided explicit bounds on the regularity of $\FI$-modules over any commutative ring $k$ \cite[Theorem A]{CE}. The author then adapted the techniques of Church and Ellenberg to work for general $\FI_G$-modules \cite[Theorem D]{R}.\\

\begin{theorem}[\cite{CE},\cite{R}]\label{finreg}
Let $V$ be an $\FI_G$-module. Then,
\[
\reg(V) \leq \hd_1(V) + \min\{\hd_1(V),\gd(V)\} - 1.
\]
In particular, if $V$ is degree-wise coherent, then $V$ has finite regularity.\\
\end{theorem}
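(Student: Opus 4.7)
The plan is to follow the inductive argument of Church-Ellenberg \cite{CE}, as adapted to $\FI_G$-modules by the author in \cite{R}. The main tool is the shift functor $\Sigma$ (defined formally later in the paper) together with the associated derivative functor $DV := \coker(V \to \Sigma V)$. The natural comparison map $V \to \Sigma V$ has torsion kernel and fits into a four-term exact sequence
\[
0 \to \ker \to V \to \Sigma V \to DV \to 0.
\]

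First I would establish the basic structural lemmas for $\Sigma$ and $D$: shifts of free modules decompose as direct sums of free modules with controlled generating degrees, the derivative functor $D$ strictly decreases generating degree (sending modules generated in degree $\leq n$ to ones generated in degree $\leq n-1$), and the kernel of $V \to \Sigma V$ is torsion of degree bounded by $\gd(V)$. These give controlled inequalities relating $\hd_i(\Sigma V)$, $\hd_i(DV)$, and $\hd_i(V)$.

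The heart of the argument is a double induction, on the homological degree $i$ and on $\min\{\gd(V), \hd_1(V)\}$. Splitting the four-term sequence into two short exact sequences and taking long exact sequences in homology yields
\[
\cdots \to H_{i+1}(DV) \to H_i(\ker) \to H_i(V) \to H_i(\Sigma V) \to H_i(DV) \to \cdots
\]
which bounds $\hd_i(V)$ in terms of $\hd_i(\Sigma V)$, $\hd_{i+1}(DV)$, and $\hd_i(\ker)$. Since the kernel is torsion of bounded degree, and the derivative strictly reduces the generating degree while preserving $\hd_1$ up to a controlled shift, the inductive hypothesis applies to both $\Sigma V$ and $DV$. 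Assembling the resulting inequalities produces $\hd_i(V) - i \leq \hd_1(V) + \min\{\hd_1(V), \gd(V)\} - 1$, which is the regularity bound.

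The main obstacle will be the simultaneous bookkeeping across both inductions, and especially the verification that the torsion kernel $\ker(V \to \Sigma V)$ does not spoil the recurrence at each step. The presence of the group $G$ does not fundamentally alter the combinatorics but requires that each lemma about generating degrees of shifts and derivatives be checked at the level of $G_n = \mathfrak{S}_n \wr G$-representations. For the \emph{in particular} clause, the remark preceding the theorem shows that degree-wise coherence is equivalent to $\gd(V) < \infty$ and $\hd_1(V) < \infty$, so the displayed bound immediately yields $\reg(V) < \infty$.
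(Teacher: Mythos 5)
The paper itself does not prove Theorem \ref{finreg}: it is quoted from \cite{CE} and \cite{R}, so your sketch can only be measured against those proofs, whose overall strategy (the map $V \to \Sigma V$, the derivative $DV$, and an induction on generating degree and homological degree) you do reproduce in outline. The problem is that your outline treats the deepest ingredient of that strategy as a ``basic structural lemma,'' and moreover misstates it. You claim that the kernel of $V \to \Sigma V$ is torsion of degree bounded by $\gd(V)$. That is false. Take $G=1$ and $V = M(1)/U$, where $U$ is the submodule generated by a single basis vector of $M(1)_N$: then $U_m = M(1)_m$ for all $m \geq N$, so $V$ is a torsion module with $\gd(V) = 1$, $\hd_1(V) = N$, and $\td(V) = N-1$; in particular $\ker(V \to \Sigma V)$ is nonzero in degree $N-1$, which is arbitrarily large compared with $\gd(V)$. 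The correct statement bounds the torsion degree in terms of \emph{both} $\gd(V)$ and $\hd_1(V)$, and this is precisely Theorem D of \cite{CE} (extended to $\FI_G$ in \cite[Theorem 3.19]{R}); its proof is the technical heart of those papers, a delicate analysis of maps between free modules, not a formal consequence of how $\Sigma$ and $D$ act on free modules. Without it your double induction cannot run: the long exact sequences you write bound $\hd_i(V)$ by quantities involving the homology of the torsion kernel, and you have no control over its degrees.

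A second, related soft spot: the assertion that $D$ ``preserves $\hd_1$ up to a controlled shift'' again needs the torsion-degree bound, since in the two short exact sequences obtained from $0 \to \ker \to V \to \Sigma V \to DV \to 0$ the estimates for $\hd_1(DV)$ and $\hd_i(\Sigma V)$ feed back through the kernel term (and through the fact that free resolutions stay homology-acyclic after shifting, which you should at least flag). Finally, the step ``assembling the resulting inequalities produces $\hd_i(V)-i \leq \hd_1(V) + \min\{\hd_1(V),\gd(V)\} - 1$'' is exactly where the specific constant has to be extracted from the recursion; as written this is asserted rather than verified, and it is the bookkeeping on which \cite{CE} and \cite{R} spend most of their effort. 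Your treatment of the ``in particular'' clause is fine: degree-wise coherence is equivalent to $\gd(V) < \infty$ and $\hd_1(V) < \infty$, so finiteness of regularity does follow immediately from the displayed bound once that bound is actually established.
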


One notable takeaway from the work of Church and Ellenberg is that their bound is only dependent on the generating degree and first homological degree of the module. In particular, their work entirely takes place in the category $\FI\Mod^{coh}$. This philosophy was also heavily featured in \cite{R}. One goal of the present work is to develop an understanding of the category $\FI_G\Mod^{coh}$.\\

Following this work, regularity was studied Liang Gan, Li, and the author in \cite{G}, \cite{L}, \cite{L2}, and \cite{LR}. The paper \cite{LR} studied the connection between regularity and a local cohomology theory for $\FI_G$-modules, in the case where $G$ is a finite group. We will later rediscover this connection in the more general context of the current work.\\

To conclude this section, we state the theorem which classifies the homology acyclic modules.\\

\begin{theorem}[Theorem 1.3 \cite{LY}, Theorem A \cite{R}]\label{homacyclic}
Let $V$ be a degree-wise coherent module. Then the following are equivalent:
\begin{enumerate}
\item $V$ is acylic with respect to the homology functors;
\item $H_1(V) = 0$;
\item $H_i(V) = 0$ for some $i \geq 1$;
\item $V$ is $\sharp$-filtered.\\
\end{enumerate}
\end{theorem}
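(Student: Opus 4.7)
The implications $(1) \Rightarrow (2) \Rightarrow (3)$ are immediate from the definitions, so the substantive content lies in $(4) \Rightarrow (1)$ and $(3) \Rightarrow (4)$. I would prove these in order.

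For $(4) \Rightarrow (1)$, the key is to first show that every free module $M(W)$ (with $W$ a $kG_r$-module) is $H_i$-acyclic for $i \geq 1$, and then propagate this along a $\sharp$-filtration. The functor $M(-)$ is exact (since $k[\Hom_{\FI_G}([r],[n])]$ is a free right $kG_r$-module for every $n$) and sends $kG_r$-projectives to $\FI_G$-projectives, as noted in the paper. Applying $M(-)$ to a projective resolution $P_\bullet \to W$ therefore yields a projective resolution $M(P_\bullet) \to M(W)$. A direct computation shows $H_0(M(P_j))_n = 0$ for $n \neq r$ and $H_0(M(P_j))_r = P_j$, so the complex $H_0(M(P_\bullet))$ is isomorphic to $P_\bullet$ concentrated in degree $r$, whose homology is $W$ in homological degree zero. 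Hence $H_i(M(W)) = 0$ for $i \geq 1$. For a general $\sharp$-filtered $V$, induction on the filtration length using the long exact sequence of homology for each $0 \to V^{(j-1)} \to V^{(j)} \to V^{(j)}/V^{(j-1)} \to 0$ completes the argument.

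For $(3) \Rightarrow (4)$, I would argue by induction on $d := \gd(V)$, which is finite by degree-wise coherence. The base case $d = -\infty$ is trivial. For the inductive step, let $V_{<d} \subseteq V$ denote the submodule generated by elements in degrees strictly below $d$, so that the quotient $V/V_{<d}$ is generated purely in degree $d$ and admits a natural surjection from $M((V/V_{<d})_d)$. The plan is to leverage the hypothesis $H_i(V) = 0$, together with the finite regularity provided by Theorem~\ref{finreg}, to show that this surjection is an isomorphism (making the top quotient free) and that $V_{<d}$ inherits an analogous homology vanishing. The inductive hypothesis then yields a $\sharp$-filtration of $V_{<d}$, which extends to one of $V$ via the short exact sequence $0 \to V_{<d} \to V \to V/V_{<d} \to 0$.

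The main obstacle is precisely the dimension-shifting required in this last step: a single vanishing $H_i(V) = 0$ must be translated into $H_1(V/V_{<d}) = 0$ (forcing freeness of the top quotient) and into an analogous vanishing for $V_{<d}$. This is handled by combining the long exact sequence of homology for the above short exact sequence with the support bounds on each $H_j$ coming from Theorem~\ref{finreg}: finite regularity prevents pathological degree patterns and lets the vanishing propagate through the filtration. The elementary fact that $H_0(N) = 0$ forces $N = 0$ (by induction on the lowest nonzero degree of $N$, noting $N_n = N_{<n}$ for all $n$) then converts the vanishing of the zeroth homology of the kernel of a free cover into the vanishing of that kernel itself, sealing the argument.
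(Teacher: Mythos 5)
The paper itself does not prove this theorem; it is imported from Li--Yu and from \cite{R}, so your argument has to stand on its own. Your direction $(4) \Rightarrow (1)$ is correct and is the standard argument: $M(-)$ is exact and sends $kG_r$-projectives to projectives, $H_0(M(P_j))$ is $P_j$ concentrated in degree $r$, so $H_i(M(W))=0$ for $i\geq 1$, and acyclicity passes to direct sums and up a $\sharp$-filtration. Moreover, your outline genuinely does prove $(2) \Rightarrow (4)$: in the long exact sequence for $0 \to V_{<d} \to V \to V/V_{<d} \to 0$, the module $H_1(V/V_{<d})$ is supported in degrees $\geq d+1$ (the quotient vanishes below degree $d$ and is generated in degree $d$), while $H_0(V_{<d})$ is supported in degrees $\leq d-1$; hence $H_1(V)=0$ forces $H_1(V/V_{<d})=0$, the top quotient is free by your $H_0(K)=0 \Rightarrow K=0$ observation, $H_1(V_{<d})=0$ follows because free modules are acyclic, and the induction on generating degree closes.

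The genuine gap is $(3) \Rightarrow (4)$ when the single vanishing index satisfies $i \geq 2$ --- which is the real content of statement (3) beyond the equivalence of (1), (2), (4) --- and the mechanism you propose does not deliver it. For $i \geq 2$ the long exact sequence only yields an injection $H_i(V/V_{<d}) \hookrightarrow H_{i-1}(V_{<d})$, and the support bounds you appeal to create no contradiction: $H_i(V/V_{<d})$ lives in degrees $\geq d+i$, while the only available upper bound on the support of $H_{i-1}(V_{<d})$ is $\reg(V_{<d}) + (i-1)$, and by Theorem~\ref{finreg} the regularity of $V_{<d}$ is controlled by $\hd_1(V_{<d})$, which is bounded only by $\max\{\hd_2(V/V_{<d}), \hd_1(V)\}$ --- quantities that can far exceed $d$. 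So ``finite regularity prevents pathological degree patterns'' is an assertion, not an argument; there is no degree clash forcing the vanishing. Worse, even if you knew $H_i(V/V_{<d})=0$ for that one $i \geq 2$, concluding that $V/V_{<d}$ (a module generated in a single degree) is free is itself an instance of the implication $(3)\Rightarrow(4)$, so the induction as structured begs the question at its top step; only $H_1=0$ gives freeness cheaply. This is exactly why the cited proofs do not run the induction through the filtration by generating degree with regularity bookkeeping, but instead trade a single higher vanishing for vanishing in lower homological degree using the shift and derivative functors (and the associated long exact sequence relating the homology of $V$, $\Sigma V$, and $DV$, together with Nagpal-type shift theorems). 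As written, your proposal establishes the equivalence of (1), (2), (4) but not that (3) implies them.
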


\subsection{The shift and derivative functors}\label{shift}

\begin{definition}
Let $\iota:\FI_G \rightarrow \FI_G$ be the functor which is defined on objects by $\iota([n]) = [n+1]$, while for each morphism $(f,g):[n] \rightarrow [m]$ we set $\iota(f,g) = (f_{+},g_{+}))$ where,
\[
f_{+}(x) := \begin{cases} f(x) &\text{ if $x \leq n$}\\ m+1 &\text{ otherwise}\end{cases}, \hspace{1cm} g_{+}(x) := \begin{cases} g(x) &\text{ if $x \leq n$}\\ 1 &\text{ otherwise.}\end{cases}
\]
The \textbf{shift functor} is defined as the composition
\[
\Sigma V := V \circ \iota.
\]
We write $\Sigma_a$ for the $a$-th iterate of $V$.\\

For each positive integer $a$, there is a natural map of $\FI_G$-modules $\tau_a:V \rightarrow \Sigma_a V$ defined on each point by the transition map $(f^n_a,\mathbf{1})_\as$, where $f^n_a:[n] \rightarrow [n+a]$ is the natural inclusion while $\mathbf{1}$ is the trivial map into $G$. The \textbf{length $a$ derivative functor} is the cokernel of this map
\[
D_aV := \coker(\tau_a)
\]
We write $D^b_a$ for the $b$-th iterate of $D_a$. In the case where $a = 1$, we will write $D := D_1$.\\
\end{definition}

The derivative functors were introduced by Church and Ellenberg in \cite{CE}, and have since seen use in \cite{R} and \cite{LY}. Later, we will consider the direct limit of all derivative functors, which we call the infinite derivative (see Definition \ref{infderv2}). We record some useful properties of the derivative and shift functors below. Proofs of these facts can be found in \cite[Proposition 3.3]{R} and \cite[Proposition 3.5]{CE}.\\

\begin{proposition}\label{dervprop}
Fix an integer $a \geq 1$. The length $a$ derivative functor and the shift functor enjoy the following properties:
\begin{enumerate}
\item If $V$ is an $\FI_G$-module which is degree-wise coherent, then the same is true of $D_aV$ and $\Sigma V$;
\item If $\gd(V) \leq d$, then $\gd(\Sigma V) \leq d$ and $\gd(D_aV) < d$;
\item $D_a$ is right exact, and $\Sigma_a$ is exact;
\item For any $kG_r$-module $W$, both $\Sigma M(W)$ and $D_aM(W)$ are free modules. In fact,
\begin{eqnarray}
\Sigma M(W) \cong M(W) \oplus M(\Res_{G_{r-1}}^{G_r}W), \hspace{1cm} DM(W) \cong M(\Res_{G_{r-1}}^{G_r}W). \label{freeshift}
\end{eqnarray}
In particular, $\Sigma$ and $D_a$ preserve $\sharp$-filtered modules.\\
\end{enumerate}
\end{proposition}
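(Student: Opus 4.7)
The plan is to prove the four items in the order (3), (4), (2), (1), since each feeds the next. Item (3) is essentially immediate: $\Sigma_a = (-) \circ \iota^a$ is precomposition with an endofunctor of $\FI_G$, and since limits and colimits in $\FI_G\Mod$ are computed pointwise, such a precomposition is exact. Right exactness of $D_a$ then follows: for $0 \to V' \to V \to V'' \to 0$, applying the exact $\Sigma_a$ and taking cokernels of the vertical maps $\tau_a$ yields, via the snake lemma, the exact tail $D_aV' \to D_aV \to D_aV'' \to 0$.

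The core work lies in (4). I would establish the $\Sigma$-formula by giving a $G_r$-equivariant decomposition of the hom-set: every $(f,g): [r] \to [n+1]$ either has image inside $[n]$, or hits $n+1$ at a unique index $f(i) = n+1$. The first class is canonically $\Hom_{\FI_G}([r],[n])$ and is exactly the image of $\tau_1$; the second, by deleting the slot $i$ and recording the erased pair $(i, g(i))$, is $G_r$-equivariantly identified with $G_r \times_{G_{r-1}} \Hom_{\FI_G}([r-1],[n])$, where $G_{r-1} \hookrightarrow G_r = \mathfrak{S}_r \wr G$ is the stabilizer of the last coordinate. Linearizing and tensoring with $W$ over $kG_r$ yields the splitting $\Sigma M(W) \cong M(W) \oplus M(\Res_{G_{r-1}}^{G_r} W)$ and identifies $DM(W)$ with the second summand. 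The freeness of $D_a M(W)$ for $a \geq 2$ I would then establish inductively using the factorization $\tau_a = \tau_1 \circ \tau_{a-1}$: since $V = M(W)$ is torsion-free, the snake lemma produces a short exact sequence $0 \to D_{a-1} M(W) \to D_a M(W) \to D(\Sigma_{a-1} M(W)) \to 0$ whose outer terms are already free by induction, and a direct stratification of $\Hom_{\FI_G}([r], [n+a])$ by the subset $f^{-1}(\{n+1,\ldots,n+a\}) \subseteq [r]$ shows that this extension splits into a genuine direct sum of $M(\Res_{G_{r-s}}^{G_r} W)$'s.

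Items (2) and (1) are then formal consequences. For (2): given $\gd(V) \leq d$, pick a surjection $F \twoheadrightarrow V$ with $F$ free in degrees $\leq d$. Exactness of $\Sigma$ gives $\Sigma F \twoheadrightarrow \Sigma V$, and since each $\Sigma M(W)$ with $W$ a $kG_r$-module is free in degrees $\leq r \leq d$ by (4), we get $\gd(\Sigma V) \leq d$. Right exactness of $D_a$ gives $D_a F \twoheadrightarrow D_a V$, and since each $D_a M(W)$ has generating degree strictly less than $r$ by (4), we get $\gd(D_a V) < d$. For (1): apply the same machinery to a presentation $0 \to K \to F \to V \to 0$ of a degree-wise coherent $V$, so $\gd(K)$ is finite. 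Exactness of $\Sigma$ and item (2) give $\gd(\Sigma V), \gd(\Sigma K) < \infty$, so $\Sigma V$ is degree-wise coherent. Right exactness of $D_a$ yields an exact sequence $D_a K \to D_a F \to D_a V \to 0$ with $D_a F$ free; the kernel of $D_a F \twoheadrightarrow D_a V$ is a quotient of $D_a K$, whose generating degree is finite by (2), so $D_a V$ is degree-wise coherent as well.

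The main obstacle is the combinatorial bookkeeping in (4). One must verify that the hom-set bijection is genuinely $G_r$-equivariant with respect to the wreath-product action and that, after tensoring over $kG_r$ with $W$, the punctured stratum collapses to $M(\Res_{G_{r-1}}^{G_r} W)$ with the correct induced module structure. Upgrading the freeness claim from $a = 1$ to $a \geq 2$ is similarly delicate: the inductive exact sequence above only guarantees $\sharp$-filteredness a priori, and establishing that the extensions split into a true direct sum requires stratifying $\Hom_{\FI_G}([r],[n+a])$ by how many of the indices in $[r]$ land in the new part $\{n+1,\ldots,n+a\}$ and matching each stratum with a summand of the form $M(\Res_{G_{r-s}}^{G_r} W)$.
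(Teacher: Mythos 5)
Your proof is correct and follows essentially the same route as the sources the paper defers to for this statement (\cite[Prop.\ 3.5]{CE}, \cite[Prop.\ 3.3]{R}, going back to \cite{CEF}): exactness of $\Sigma_a$ as precomposition, the snake lemma for right exactness of $D_a$, the two-stratum decomposition of $\Hom_{\FI_G}([r],[n+1])$ (image of $\tau_1$ versus morphisms hitting $n+1$) giving $\Sigma M(W)\cong M(W)\oplus M(\Res_{G_{r-1}}^{G_r}W)$, the window-stratification of $\Hom_{\FI_G}([r],[n+a])$ for $D_aM(W)$, and then (2) and (1) as formal consequences via presentations. One cosmetic slip: the subgroup $G_{r-1}=\mathfrak{S}_{r-1}\wr G\subseteq G_r$ is not the full stabilizer of the last coordinate (that stabilizer is $G_{r-1}\times G$), but since you record the erased label $g(i)$ as part of the stratum data the induced-set identification, and hence the tensor computation over $kG_r$, goes through unchanged.
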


\begin{remark}
Note that if $G$ is a finite group, then $\Sigma$ and $D_a$ both preserve finitely generated $\FI_G$-modules. This is no longer the case if $G$ is infinite. It is always the case that these functors preserve being degree-wise coherent.\\
\end{remark}

Part 3 of Proposition \ref{dervprop} implies that the functors $D_a$ have left derived functors. We will follow the notation of \cite{CE} and \cite{R} and write $H_i^{D_a^b}$ for the $i$-th derived functor of $D_a^b$. One of the main insights of \cite{CE} was that the properties of the modules $H_i^{D^b}(V)$ are critical in bounding the regularity of $V$. Later, the author \cite{R} showed that the functors $H_1^{D^b}$ could be used to define a theory of depth for $\FI_G$-modules. Proofs for the following facts can be found in \cite{CE} and \cite{R}.\\

\begin{proposition}\label{dervdervprop}
Fix integers $a,b,i \geq 1$. The functors $H_i^{D_a^b}$ enjoy the following properties:
\begin{enumerate}
\item If $V$ is degree-wise coherent, then $\deg(H_i^{D_a^b}) < \infty$;
\item For any module $V$, there is an exact sequence
\[
0 \rightarrow H_1^{D_a}(V) \rightarrow V \stackrel{\tau_a}\rightarrow \Sigma_aV \rightarrow D_aV \rightarrow 0.
\]
\item If $i > b$, then $H_i^{D_a^b} = 0$.\\
\end{enumerate}
\end{proposition}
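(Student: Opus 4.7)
The three parts are best proved in the order (2), (3), (1), as each feeds into the next.

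For part (2), I would unwind the definition on a projective resolution $P_\bullet \to V$. Since projectives are free, each $P_i$ is a sum of summands of the form $M(W)$; on any such summand the map $\tau_a$ is induced by post-composition with $(f^n_a,\mathbf{1})$ on the set $\Hom_{\FI_G}([r],[n])$, and is in particular injective. This yields a short exact sequence of complexes
\[
0 \to P_\bullet \to \Sigma_a P_\bullet \to D_a P_\bullet \to 0.
\]
Because $\Sigma_a$ is exact by Proposition~\ref{dervprop}(3), the higher homology of $\Sigma_a P_\bullet$ vanishes and $H_0(\Sigma_a P_\bullet) = \Sigma_a V$; the associated long exact sequence then collapses to the stated four-term sequence.

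For part (3) I would induct on $b$. The base case $b=1$ is immediate from the long exact sequence above: for $i \geq 2$, the flanking groups $H_i(\Sigma_a P_\bullet)$ and $H_{i-1}(P_\bullet)$ both vanish. For $b \geq 2$, apply a Grothendieck spectral sequence to the composition $D_a^b = D_a \circ D_a^{b-1}$, using that free modules are $D_a$-acyclic (base case, since $\tau_a$ is injective on free modules) and $D_a^{b-1}$-acyclic (by induction, combined with Proposition~\ref{dervprop}(4), which sends free modules to free modules). The page $E^2_{p,q} = H_p^{D_a}(H_q^{D_a^{b-1}}(V)) \Rightarrow H_{p+q}^{D_a^b}(V)$ then vanishes unless both $p \leq 1$ and $q \leq b-1$, so $H_i^{D_a^b} = 0$ for $i > b$.

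For part (1), combine part (2) with Theorem A from the introduction. When $b = 1$, every element of $H_1^{D_a}(V) = \ker(\tau_a)$ is by definition torsion in $V$, so this submodule is contained in the torsion submodule of $V$; Theorem A forces $\td(V) < \infty$ for degree-wise coherent $V$, yielding $\deg(H_1^{D_a}(V)) < \infty$, while higher $H_i^{D_a}$ vanish by part (3). For $b \geq 2$, rerun the same spectral sequence: each $H_q^{D_a^{b-1}}(V)$ is degree-wise coherent (using iterated Proposition~\ref{dervprop}(1) together with the fact from Theorem~\ref{abel} that subquotients of degree-wise coherent modules are degree-wise coherent), and by induction it is moreover of finite degree when $q \geq 1$. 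Applying the $b=1$ case of (1) then makes every $E^2_{p,q}$ with $p + q \geq 1$ of finite degree, and the abutment in total degree $i \geq 1$ inherits this property as a finite iterated extension.

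The main obstacle is part (1): the spectral-sequence bookkeeping is essentially formal, but the actual finiteness ultimately rests on Theorem A---the nontrivial fact that torsion in a degree-wise coherent $\FI_G$-module is confined to bounded degree. Without Theorem A, one could only conclude that $H_i^{D_a^b}(V)$ is degree-wise coherent rather than finite-degree, and the whole cascade would fail to close.
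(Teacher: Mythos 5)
Your arguments for parts (2) and (3) are correct, and they are essentially the arguments of the sources the paper itself defers to (the paper gives no proof of Proposition \ref{dervdervprop}; it cites \cite{CE} and \cite{R} and remarks that the case of general $a$ is identical to $a=1$): the four-term sequence comes from the long exact sequence attached to $0 \to P_\dt \to \Sigma_a P_\dt \to D_a P_\dt \to 0$, using that $\tau_a$ is (split) injective on free modules and that $\Sigma_a$ is exact, and the vanishing for $i>b$ follows from the Grothendieck spectral sequence for $D_a\circ D_a^{b-1}$, since $D_a$ preserves free modules and free modules are $D_a$-acyclic.

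Part (1), however, has a genuine gap: the argument is circular relative to this paper's logical structure. You reduce everything to the statement that a degree-wise coherent module has finite torsion degree, i.e.\ to the forward direction of Theorem \ref{altchar} (Theorem A). But in the paper that statement is precisely the unnamed Lemma at the start of Section 3.1, and it is \emph{deduced from} Proposition \ref{dervdervprop}: the exact sequence of part (2) gives $\td(V)=\deg(H_1^{D}(V))$, and part (1) supplies the finiteness. The full proof of Theorem \ref{altchar} likewise invokes parts (1) and (3) of this proposition repeatedly, and Theorem \ref{abel} is proved from Theorem \ref{altchar}; moreover Theorem \ref{abelian} only asserts that kernels, images, and cokernels of maps between coherent modules are coherent, not that arbitrary subquotients are. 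So neither Theorem A nor Theorem B is available here. The genuine content of part (1) is the quantitative theorem of \cite{CE} and \cite{R}: an explicit bound on $\deg(H_i^{D^b}(V))$ purely in terms of $\gd(V)$ and $\hd_1(V)$, obtained there by a direct induction on presentations and an analysis of how the derivative interacts with free modules and homology; the torsion bound (Theorem D of \cite{CE}, Theorem 3.19 of \cite{R}) is a \emph{consequence} of those bounds, not an input to them. Your spectral-sequence reduction of the case $b\geq 2$ to $b=1$ is fine, but the case $b=1$, $i=1$ --- finiteness of $\deg\ker(\tau_a)$ for a coherent module --- is exactly the nontrivial point, and your proposal defers it rather than proving it. If you insist on a citation rather than a proof, you would need to cite the external results of \cite{CE} and \cite{R} directly (as the paper does for the whole proposition), not this paper's Theorem A.
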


\begin{remark}
The cited sources prove these facts in the case where $a = 1$. The proofs are identical for arbitrary $a$.\\
\end{remark}

Note that the exact sequence in the second part of Proposition \ref{dervdervprop} is strongly related to torsion. This will be explored in the next section.\\

\begin{definition}
Let $V$ be a degree-wise coherent module. Then we define its \textbf{depth} to be the quantity,
\[
\depth(V) := \inf\{b \mid H_1^{D^{b+1}}(V) \neq 0\} \in \N \cup \{\infty\}.
\]
\text{}\\
\end{definition}

\begin{remark}
In \cite{LR} an alternative notion of depth is provided, which is defined in terms of the vanishing of particular $\Ext$ groups. It is shown in that paper that both notions agree with one another. Due to the emphasis on the derivative functors in this paper, we will use the above definition.\\
\end{remark}

Perhaps the most significant property of the shift functor is the following structural theorem. Note that this theorem was proven by Nagpal \cite[Theorem A]{N} in the case where $G$ is a finite group, $k$ is a Noetherian ring, and $V$ is finitely generated. It was then generalized by Nagpal and Snowden \cite{NS} to the case where $G$ is a polycyclic-by-finite group. Finally, the author \cite{R} proved the theorem to the level of generality presented here.\\

\begin{theorem}\label{nagpal}
Let $V$ be an $\FI_G$-module which is degree-wise coherent. Then for $b \gg 0$, $\Sigma_b V$ is $\sharp$-filtered.\\
\end{theorem}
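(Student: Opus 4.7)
The plan is to prove the theorem by induction on $d := \gd(V)$, using Theorem~\ref{homacyclic} to recast the conclusion as the vanishing of $H_1(\Sigma_b V)$ for $b \gg 0$; the base case $V = 0$ is vacuous.

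I first reduce to the torsion-free case. By Theorem~A, $t := \td(V)$ is finite. A direct inspection of the definition of $\iota$ shows that any torsion element of $\Sigma_b V$ in degree $n$, viewed as an element of $V_{n+b}$, is already torsion in $V$ via the morphism $\iota^b(f,g)$ fixing the last $b$ coordinates. Hence $\Sigma_b V$ is torsion-free whenever $b > t$, so after replacing $V$ by $\Sigma_{t+1} V$ (which remains degree-wise coherent by Proposition~\ref{dervprop}) I may assume $V$ is torsion-free.

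Under this assumption, Proposition~\ref{dervdervprop}(2) collapses to the short exact sequence
\[
0 \to V \to \Sigma V \to DV \to 0,
\]
in which $\gd(DV) < d$ by Proposition~\ref{dervprop}(2) and $DV$ remains degree-wise coherent by Proposition~\ref{dervprop}(1). The inductive hypothesis therefore provides an integer $N$ for which $\Sigma_b DV$ is $\sharp$-filtered for all $b \geq N$. Applying the exact functor $\Sigma_b$ and taking the long exact sequence in $H_\bullet$, combined with the vanishing $H_i(\Sigma_b DV) = 0$ for $i \geq 1$ afforded by Theorem~\ref{homacyclic}, yields isomorphisms $H_i(\Sigma_b V) \cong H_i(\Sigma_{b+1} V)$ for every $i \geq 1$ and $b \geq N$. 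Hence these homology modules stabilize for $b$ large.

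The principal remaining obstacle is to show that the stable value $A := H_1(\Sigma_b V)$ for $b \gg 0$ is in fact zero, for then Theorem~\ref{homacyclic} completes the induction. To attack this I would fix a minimal free presentation $0 \to K \to F_0 \to V \to 0$ with $\gd(F_0) = d$; since $\Sigma_b F_0$ is $\sharp$-filtered for every $b$ via the formulas~\eqref{freeshift}, the long exact sequence identifies $A$ with $\ker\bigl(H_0(\Sigma_b K) \to H_0(\Sigma_b F_0)\bigr)$ for $b \geq N$. The syzygy $K$ is itself degree-wise coherent, with regularity controlled by $\reg(V)$ through Theorem~\ref{finreg}, so a secondary induction on a suitable numerical invariant (for example, the lexicographically ordered pair $(\gd, \reg)$) would yield that $\Sigma_b K$ is also $\sharp$-filtered for $b \gg 0$. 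The explicit free-summand structure~\eqref{freeshift} of both $\Sigma_b K$ and $\Sigma_b F_0$ then forces the induced map on $H_0$ between these $\sharp$-filtered modules to be injective, giving $A = 0$ and concluding the argument.
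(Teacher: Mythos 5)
Your overall skeleton is reasonable and matches the shape of the arguments in the cited literature (note that this paper does not prove Theorem \ref{nagpal} itself; it is quoted from \cite{N}, \cite{NS}, \cite{R}): the reduction to the torsion-free case using finiteness of $\td(V)$, the induction on $\gd(V)$ via the exact sequence $0 \to V \to \Sigma V \to DV \to 0$, and the resulting stabilization $H_1(\Sigma_b V) \cong H_1(\Sigma_{b+1}V)$ for $b \gg 0$ are all correct. But the decisive step --- showing that the stabilized module $A = H_1(\Sigma_b V)$ is actually zero --- is exactly where the theorem's difficulty lives, and your treatment of it has two genuine gaps. First, the ``secondary induction on $(\gd,\reg)$'' used to conclude that $\Sigma_b K$ is $\sharp$-filtered is circular: the syzygy $K$ in a presentation $0 \to K \to F_0 \to V \to 0$ satisfies only $\hd_1(V) \leq \gd(K) \leq \max\{\gd(V),\hd_1(V)\}$, so $\gd(K)$ may strictly exceed $\gd(V)$, and since $\hd_i(K) = \hd_{i+1}(V)$ for $i \geq 1$ one gets $\hd_i(K)-i \leq \reg(V)+1$, so $\reg(K)$ may also increase. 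Neither coordinate of your proposed lexicographic pair decreases, so no well-founded induction is set up; you are in effect invoking the theorem for a module to which your inductive hypothesis does not apply.

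Second, even granting that $\Sigma_b K$ and $\Sigma_b F_0$ are both $\sharp$-filtered, the assertion that the free-summand structure \eqref{freeshift} ``forces'' $H_0(\Sigma_b K) \to H_0(\Sigma_b F_0)$ to be injective is unsubstantiated, and it is not a formal consequence of $\sharp$-filteredness: the statement ``every injection of $\sharp$-filtered modules induces an injection on $H_0$'' is equivalent (via the identification $H_1(\Sigma_b V) \cong \ker(H_0(\Sigma_b K) \to H_0(\Sigma_b F_0))$ that you yourself establish) to the vanishing you are trying to prove, so asserting it amounts to assuming the conclusion. The known proofs at this level of generality close this loop with a genuinely quantitative input: the Church--Ellenberg-style regularity bound (Theorem \ref{finreg}) and its consequences for the derived functors $H_1^{D^b}$ (as in Theorem \ref{dreg}), which bound the degrees in which the obstructions $H_1^{D^b}(V)$, equivalently the torsion of the iterated derivatives, can live, and thereby force the stable obstruction to vanish once $b$ exceeds an explicit bound. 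You cite Theorem \ref{finreg} only to record that $K$ is degree-wise coherent; to repair the argument you would need to bring such a degree bound to bear on $H_1(\Sigma_b V)$ itself (for instance by showing that shifting strictly decreases the relevant homological degrees), or else follow the derivative-functor route of \cite{R}.
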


\begin{definition}\label{nagpaln}
We denote the smallest $b$ for which $\Sigma_b V$ is $\sharp$-filtered by $N(V)$.\\
\end{definition}

It is natural for one to ask whether it is possible bound $N(V)$. Indeed, this was accomplished by the author in \cite[Theorem C]{R}.\\

\begin{theorem}\label{dreg}
Let $V$ be an $\FI_G$-module which is degree-wise coherent. If $V$ is not $\sharp$-filtered, then $H_1^{D^b}(V) = 0$ for $b \gg 0$, and
\[
N(V) = \max_{b}\{\deg(H_1^{D^b}(V))\}
\]
\text{}\\
\end{theorem}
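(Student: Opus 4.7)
The plan is to combine Theorem \ref{nagpal} with the fundamental exact sequence
\[
0 \to H_1^D(V) \to V \xrightarrow{\tau} \Sigma V \to DV \to 0
\]
of Proposition \ref{dervdervprop}(2), exploiting two structural facts. First, the splitting $\Sigma M(W) \cong M(W) \oplus M(\Res W)$ of Proposition \ref{dervprop}(4) shows that $\tau$ is split injective on every free module, so $H_i^D(M(W)) = 0$ for $i \geq 1$. A long-exact-sequence induction along a $\sharp$-filtration then yields that every $\sharp$-filtered module is $D^c$-acyclic for all $c \geq 1$. Second, the exactness of $\Sigma$, together with its commutation with $D$ (both functors being built from precomposition with endofunctors of $\FI_G$), and the fact that $\Sigma$ sends projective $\FI_G$-modules to projective ones (since $kG_r$ is free as a $kG_{r-1}$-module, so $\Res$ preserves projective $kG_r$-modules), produces a natural isomorphism $H_i^{D^c}(\Sigma V) \cong \Sigma H_i^{D^c}(V)$ for all $i,c \geq 1$.

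First I would handle the upper bound $\max_c \deg H_1^{D^c}(V) \leq N(V)$ along with the vanishing $H_1^{D^b}(V) = 0$ for $b \gg 0$. Theorem \ref{nagpal} says $\Sigma_{N(V)} V$ is $\sharp$-filtered, hence $D^c$-acyclic by the first observation, so $H_1^{D^c}(\Sigma_{N(V)} V) = 0$; the commutation from the second observation gives $\Sigma_{N(V)} H_1^{D^c}(V) = 0$, bounding $\deg H_1^{D^c}(V)$ by $N(V)$. Eventual vanishing $H_1^{D^b}(V) = 0$ for $b \gg 0$ follows from the strict inequality $\gd(DV) < \gd(V)$ of Proposition \ref{dervprop}(2): iterating $D$ annihilates $V$ once $b$ exceeds $\gd(V)$, and combined with Proposition \ref{dervdervprop}(1) this forces the derived modules to vanish.

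For the reverse direction, I would show that once $b$ is large enough to exceed $\deg H_1^{D^c}(V)$ for every $c$, then $\Sigma_b V$ is $\sharp$-filtered. By Theorem \ref{homacyclic} it suffices to verify $H_i(\Sigma_b V) = 0$ for $i \geq 1$. The approach is induction on $\gd(V)$: pick a surjection $F \twoheadrightarrow V$ from a $\sharp$-filtered $F$ with $\gd(F) = \gd(V)$, apply $\Sigma_b$ and the long exact sequence of homology, and reduce to controlling the kernel $K$ together with certain connecting maps. Iteration of the fundamental four-term sequence shows that the obstructions entering those connecting maps come from the modules $H_i^{D^c}(V)$ in degrees $\geq b$, which the hypothesis rules out.

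The main obstacle is this last translation — converting a degree bound on the various derived functors $H_i^{D^c}(V)$ into vanishing of the homology of the shifted module. The cleanest packaging I see is a spectral sequence whose $E_2$-page is constructed from the modules $H_i^{D^c}(V)$ and whose abutment is $H_\bullet(\Sigma_b V)$; setting this spectral sequence up, analyzing the degrees at which its differentials can be nontrivial, and tracking how $\Sigma_b$ shifts degrees across pages is the principal technical challenge and is what pins down the exact equality asserted in the theorem.
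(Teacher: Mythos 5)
Two preliminary points about your first half. The acyclicity of $\sharp$-filtered modules with respect to $D^c$ and the commutation $H_i^{D^c}(\Sigma_a V)\cong \Sigma_a H_i^{D^c}(V)$ are both correct (they are standard and appear in \cite{R}, \cite{CE}), and with them your argument gives $\Sigma_{N(V)}H_1^{D^c}(V)=0$, i.e.\ $\deg H_1^{D^c}(V)\le N(V)-1$ -- note this is strictly sharper than the ``$\le N(V)$'' you state, which matters below. Your justification of the eventual vanishing, however, is a non sequitur as written: $D^bV=0$ is a statement about the underived functor, and combining it with the finiteness of $\deg H_1^{D^b}(V)$ from Proposition \ref{dervdervprop}(1) does not force $H_1^{D^b}(V)=0$. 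The fix is to apply $D^b$ to a projective resolution whose first two terms are generated in degrees $\le \max\{\gd(V),\hd_1(V)\}$: since $D^bM(r)=0$ for $b>r$, one gets $D^bP_1=0$ and hence $H_1^{D^b}(V)=0$ for all $b>\max\{\gd(V),\hd_1(V)\}$.

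The genuine gap is the reverse inequality: showing that $\Sigma_bV$ is $\sharp$-filtered once $b$ exceeds every $\deg H_1^{D^c}(V)$. That statement is the actual content of the theorem (it is what bounds $N(V)$ from above), and you leave it to a spectral sequence with $E_2$-page built from the $H_i^{D^c}(V)$ and abutment $H_\bullet(\Sigma_bV)$ which you do not construct; without it nothing has been proved. Note that the paper itself gives no proof either -- it quotes \cite[Theorem C]{R}, where this direction is handled by an induction on generating degree using the four-term sequence of Proposition \ref{dervdervprop} together with Theorem \ref{homacyclic}, not by a single spectral sequence -- so your plan would have to supply that induction or an honest substitute. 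Finally, beware an off-by-one that your sketch glosses over: your own upper bound gives $\max_b\deg H_1^{D^b}(V)\le N(V)-1$, while the direction you aim at would give $N(V)\le \max_b\deg H_1^{D^b}(V)+1$, so the completed strategy proves $N(V)=\max_b\{\deg H_1^{D^b}(V)\}+1$ rather than the displayed equality. A concrete test case: $G$ trivial, $V=k$ concentrated in degree $0$ has $N(V)=1$, $H_1^{D}(V)=V$ of degree $0$, and $H_1^{D^b}(V)=0$ for $b\ge 2$. This is consistent with the later corollary $N(V)=\max_i\{\deg H^i_\mi(V)\}+1$, so you should pin the normalization against \cite[Theorem C]{R} rather than take the equality as stated at face value.
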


One of the many consequences of Theorem \ref{nagpal} is the construction of the following complex, which we will see play a major part in the local cohomology of $\FI_G$-modules.\\

\begin{definition}\label{nonfunctcomplex}
Let $V$ be an $\FI_G$-module which is degree-wise coherent. Setting $b_{-1} := N(V)$, there is an exact sequence
\[
V \stackrel{\tau_{b_{-1}}}\rightarrow F^0 := \Sigma_bV \rightarrow D_{b_{-1}}V \rightarrow 0
\]
By Proposition \ref{dervprop}, the module $D_{b_{-1}}V$ is degree-wise coherent and is generated in strictly smaller degree than $V$. We may therefore repeat this process finitely many times to obtain the complex
\[
\C^\dt V : 0 \rightarrow V \rightarrow F^0 \rightarrow \ldots \rightarrow F^n \rightarrow 0.
\]
\text{}\\
\end{definition}

The complex $\C^\dt V$ was introduced by Nagpal in \cite[Theorem A]{N}. It was subsequently studied by Li in \cite{L2}, and by Li and the author in \cite{LR}. Note that the assignment $V \mapsto \C^\dt V$ is not functoral. Later, we will construct a uniform version of the complex $\C^\dt V$ which is functoral in $V$ (see Definition \ref{infcomplex}).\\

\section{Degree-wise coherence}

\subsection{Connections with torsion}

\begin{definition}
Let $V$ be an $\FI_G$-module. An element $v \in V_n$ is \textbf{torsion} if it is in the kernel of some - and therefore all - transition maps out of $V_n$. We say that a module $V$ is \textbf{torsion} if its every element is torsion.\\

Note that every $\FI_G$-module fits into an exact sequence of the form
\[
0 \rightarrow V_T \rightarrow V \rightarrow V_F \rightarrow 0
\]
where $V_T$ is a torsion module, and $V_F$ is torsion free.\\

The \textbf{torsion degree} of an $\FI_G$-module is the quantity
\[
\td(V) := \deg(V_T).
\]
\text{}\\
\end{definition}

The exact sequence of Proposition \ref{dervdervprop} implies that $\td(V) = \deg(H_1^{D}(V))$. Proposition \ref{dervdervprop} also tells us that $\deg(H_1^{D}(V))$ is finite. We therefore obtain the following corollary.\\

\begin{lemma}
Let $V$ be a degree-wise coherent module. Then $\td(V) < \infty$. In particular, a degree-wise coherent module $V$ is torsion if and only if $\deg(V) < \infty$.\\
\end{lemma}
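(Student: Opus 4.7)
The proof is essentially assembled from the paragraph immediately preceding the lemma, so my plan is to make each step precise. First I would recall the four-term exact sequence from Proposition \ref{dervdervprop}(2) with $a=1$:
\[
0 \to H_1^{D}(V) \to V \xrightarrow{\tau_1} \Sigma V \to DV \to 0.
\]
The map $\tau_1$ is defined pointwise by the transition map $(f^n_1,\mathbf{1})_\as : V_n \to V_{n+1}$ attached to the natural inclusion, so $\ker(\tau_1)$ consists precisely of those elements killed by some (and therefore all) transition maps out of $V_n$. By the definition of torsion, this kernel is exactly the maximal torsion submodule $V_T$. Hence $V_T = H_1^{D}(V)$ and, consequently, $\td(V) = \deg(V_T) = \deg(H_1^{D}(V))$.

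For the first assertion I would then appeal directly to Proposition \ref{dervdervprop}(1), which guarantees $\deg(H_i^{D_a^b}(V)) < \infty$ whenever $V$ is degree-wise coherent. Specializing to $a=b=i=1$ gives $\deg(H_1^{D}(V)) < \infty$, and combined with the previous identification this yields $\td(V) < \infty$, as required.

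For the "in particular" part, I would argue both directions of the equivalence. If $V$ is torsion, then $V = V_T$ and so $\deg(V) = \td(V) < \infty$ by the first part of the lemma. Conversely, if $\deg(V) = N < \infty$, then for any $v \in V_n$ and any $m > N$, every transition map $V_n \to V_m$ lands in $V_m = 0$, so $v$ is killed by a transition map and is therefore torsion; hence $V = V_T$ is torsion. Note this converse direction does not even require degree-wise coherence.

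There is no real obstacle here: the content of the lemma is a direct translation of Proposition \ref{dervdervprop} once one identifies $H_1^D(V)$ with $V_T$, and the only mildly delicate point is confirming that the kernel of the single map $\tau_1$ really does capture the full torsion submodule — which is immediate from the "some equals all" clause in the definition of torsion.
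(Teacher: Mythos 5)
Your proposal is correct and follows essentially the same route as the paper, which deduces the lemma from the four-term exact sequence of Proposition \ref{dervdervprop} via the identification $\td(V) = \deg(H_1^{D}(V))$ together with the finiteness statement in that proposition. Your additional verification of the ``in particular'' clause is a straightforward and accurate elaboration of what the paper leaves implicit.
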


We will see later that a converse of this statement is true as well. That is, if $V$ is generated in finite degree, and $\td(V) < \infty$, then $V$ is degree-wise coherent. To prove this fact, we will need the following proposition. It is, in some sense, a rephrasing of \cite[Theorem D]{CE}. Church and Ellenberg proved this for $\FI$-modules, and it was generalized to $\FI_G$-modules by the author in \cite[Theorem 3.19]{R}.

\begin{proposition} \label{dervtor}
Let $V \subseteq M$ be torsion-free $\FI_G$-modules which are generated in finite degree. Then $\td(M/V) < \infty$.\\
\end{proposition}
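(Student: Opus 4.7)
The plan is to apply the snake lemma to $0\to V\to M\to M/V\to 0$ with the natural map $\tau\colon W\to\Sigma W$ as vertical arrows, producing the diagram
\[
\begin{CD}
0 @>>> V @>>> M @>>> M/V @>>> 0 \\
@. @VV\tau V @VV\tau V @VV\tau V \\
0 @>>> \Sigma V @>>> \Sigma M @>>> \Sigma(M/V) @>>> 0.
\end{CD}
\]
Exactness of $\Sigma$ (Proposition \ref{dervprop}) makes the bottom row exact, and by the very definition of torsion the kernel of each $\tau_W$ coincides with the torsion submodule $H_1^D(W)$ of $W$. Since $V$ and $M$ are torsion-free, $H_1^D(V)=H_1^D(M)=0$, and the snake lemma yields
\[
0\longrightarrow H_1^D(M/V)\longrightarrow DV\longrightarrow DM\longrightarrow D(M/V)\longrightarrow 0.
\]
Because $H_1^D(M/V)$ is precisely the torsion submodule of $M/V$, it embeds into $DV$ as a torsion submodule, yielding the basic reduction
\[
\td(M/V)\;\leq\;\td(DV).
\]

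The remainder of the argument bounds $\td(DV)$ by induction on $d:=\gd(V)$, which drops strictly under $D$ by Proposition \ref{dervprop}. The base case $V=0$ is trivial. For the inductive step, the key observation is that the map $f\colon DV\to DM$ descends to an injection $\bar f\colon DV/\mathrm{tors}(DV)\hookrightarrow DM/\mathrm{tors}(DM)$ on torsion-free quotients: given $v\in DV$ with $f(v)\in\mathrm{tors}(DM)$, some transition $\phi$ kills $f(v)$, so $\phi(v)\in\ker f=\mathrm{tors}(M/V)\subseteq\mathrm{tors}(DV)$, and a further transition annihilates $\phi(v)$, forcing $v$ itself into $\mathrm{tors}(DV)$. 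This exhibits a torsion-free inclusion of $\FI_G$-modules generated in finite degree in which the submodule has generating degree $\leq\gd(DV)<d$, so the inductive hypothesis applies. A three-term analysis of $\mathrm{tors}(DV)$ gives the estimate $\td(DV)\leq\max\{\td(M/V),\td(DM)\}$ (the subquotient $\mathrm{tors}(DV)/\mathrm{tors}(M/V)$ embeds into $\mathrm{tors}(DM)$ via $f$), which, combined with the inductive hypothesis and an independent bound on $\td(DM)$, closes the induction.

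The main obstacle is the interlocking nature of the induction: bounding $\td(M/V)$ passes through $\td(DV)$, which requires both the inductive hypothesis at strictly smaller generating degree and a bound on $\td(DM)$ for the ambient module. The resolution is to state the proposition as part of a simultaneous induction that jointly establishes, for every torsion-free $\FI_G$-module $W$ generated in finite degree $\leq d$, both the finiteness of $\td(DW)$ and the conclusion of the proposition for torsion-free inclusions $W\subseteq N$. The strict decrease $\gd(DW)<\gd(W)$ from Proposition \ref{dervprop} is what ensures the induction terminates.
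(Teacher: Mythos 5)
Your opening reduction is exactly the paper's: applying the snake lemma (equivalently, the long exact sequence of derived functors of $D$) to $0 \to V \to M \to M/V \to 0$ and using torsion-freeness of $V$ and $M$ identifies $H_1^D(M/V)$ with $\ker(DV \to DM)$, and $\td(M/V) = \deg H_1^D(M/V)$. (A small imprecision: $\ker\tau_W = H_1^D(W)$ is in general a proper submodule of the torsion of $W$ --- it consists of the elements killed by a single shift --- but its degree equals $\td(W)$, so the reduction is unaffected.) At this point the paper simply cites \cite[Theorem D]{CE} and \cite[Theorem 3.19]{R} for the finiteness of the degree of this kernel; that citation carries the entire content of the proposition, which the paper itself describes as a rephrasing of Church--Ellenberg's Theorem D.

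Your attempt to replace that citation by an elementary induction has a genuine gap: it is circular, and the proposed simultaneous induction does not break the circle. The two estimates you produce are $\td(M/V) \le \td(DV)$ and $\td(DV) \le \max\{\td(M/V), \td(DM)\}$, which together say nothing; to proceed you need an independent finiteness statement for $\td(DW)$ with $W$ torsion-free ($W = V$ or $W = M$). But that statement is equivalent to the proposition at the \emph{same} inductive level, since it is the proposition applied to $W \subseteq \Sigma W$, an inclusion whose submodule has generating degree exactly $\gd(W)$, not smaller; moreover $\gd(M)$ is not controlled by $d = \gd(V)$, so ``an independent bound on $\td(DM)$'' cannot come from induction on $d$. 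The one place you do invoke the inductive hypothesis --- the inclusion $\bar f\colon DV/\mathrm{tors}(DV) \hookrightarrow DM/\mathrm{tors}(DM)$ --- only yields finiteness of the torsion degree of $\coker(\bar f)$, a module unrelated to the quantity you need: the torsion you are trying to bound ($\ker f$, and more generally $\mathrm{tors}(DV)$) was discarded in forming those torsion-free quotients, and you never explain how $\td(\coker \bar f)$ feeds back into $\td(DV)$ or $\td(M/V)$. Nor can the strict drop $\gd(DV) < \gd(V)$ rescue the argument by itself: $DV$ need not be torsion-free, and small generating degree alone does not bound torsion degree (over a non-Noetherian $k$, a strictly increasing chain of ideals $I_0 \subsetneq I_1 \subsetneq \cdots$ yields a quotient of $M(0)$ with values $k/I_n$, generated in degree $0$ but with torsion in every degree). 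So the torsion-freeness of $V$ must be exploited in an essentially non-formal way; that is precisely the content of \cite[Theorem D]{CE} and \cite[Theorem 3.19]{R}, which your diagram chases do not reconstruct. The fix is to cite those results (or reproduce their proofs) for the finiteness of $\deg\ker(DV \to DM)$, as the paper does.
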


\begin{proof}
We have an exact sequence,
\[
0 \rightarrow V \rightarrow M \rightarrow M/V \rightarrow 0
\]
Applying the functor $D$, we obtain an exact sequence
\[
H_1^D(M) \rightarrow H_1^D(M/V) \rightarrow DV \rightarrow DM.
\]
By assumption $M$ is torsion-free, and therefore $H_1^D(M) = 0$. This implies that $H_1^D(M/V) \cong \ker(DV \rightarrow DM)$. Unpacking definitions, \cite[Theorem D]{CE} and \cite[Theorem 3.19]{R} imply that this kernel is only non-zero in finitely many degrees.\\
\end{proof}

We are now able to prove the main theorem of this section.\\

\begin{theorem}\label{altchar}
Let $V$ be an $\FI_G$-module which is generated in finite degree. Then $V$ is degree-wise coherent if and only if $\td(V) < \infty$.\\
\end{theorem}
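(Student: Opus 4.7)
Only the converse direction requires work (the forward is Lemma 3.2). Assume $V$ is generated in finite degree with $\td(V) < \infty$; the goal is to prove $\hd_1(V) < \infty$. The plan is to reduce to the torsion-free case, then induct on the generating degree, using Proposition 3.3 at the inductive step to propagate an inductive hypothesis through the derivative.

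For the reduction, write $0 \to V_T \to V \to V_F \to 0$ with $V_T$ the maximal torsion submodule and $V_F := V/V_T$. Since $\deg(V_T) \leq \td(V) < \infty$, the module $V_T$ is directly verified to be degree-wise coherent: the canonical surjection $\bigoplus_{n \leq \td(V)} M(V_n) \twoheadrightarrow V_T$ has kernel generated in degree $\leq \td(V)+1$, since every $\FI_G$-morphism $[n]\to[m]$ with $n\leq\td(V)$ and $m\geq\td(V)+1$ factors through $[\td(V)+1]$. The long exact sequence in homology for the above SES then reduces the theorem to showing that $V_F$ is degree-wise coherent.

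I induct on $d := \gd(V_F)$, the base $d = -\infty$ being trivial. In the inductive step, apply the SES
$$0 \to V_F \xrightarrow{\tau} \Sigma V_F \to DV_F \to 0,$$
with $\tau$ injective by torsion-freeness. Proposition 2.12(2) gives $\gd(DV_F) < d$, and Proposition 3.3 applied to the torsion-free inclusion $V_F \subseteq \Sigma V_F$ (both generated in finite degree, since $\Sigma$ preserves this) gives $\td(DV_F)<\infty$. The inductive hypothesis then yields $DV_F$ degree-wise coherent, and Theorem 2.8 produces $\hd_i(DV_F)<\infty$ for every $i\geq 1$. The associated long exact sequence yields $\hd_1(V_F) \leq \max\{\hd_2(DV_F),\; \hd_1(\Sigma V_F)\}$, whose first term is finite by regularity.

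The technical heart is bounding $\hd_1(\Sigma V_F)$. I bootstrap via Theorem 2.16 (Nagpal) applied to $DV_F$: there exists $b_0$ such that $D(\Sigma_b V_F) = \Sigma_b(DV_F)$ is $\sharp$-filtered for $b\geq b_0$, whence by Theorem 2.7 its positive homology vanishes. The SES $0\to\Sigma_b V_F\to\Sigma_{b+1}V_F\to D(\Sigma_b V_F)\to 0$ then forces $H_i(\Sigma_b V_F)\cong H_i(\Sigma_{b+1}V_F)$ for all $i\geq 1$ and $b\geq b_0$. Combining this shift-stabilization with the iterated $\sharp$-filtrations of the derivatives and the length-$b$ SES $0 \to V_F \to \Sigma_b V_F \to D_b V_F \to 0$ (whose cokernel $D_b V_F$ is itself degree-wise coherent by the inductive hypothesis via Propositions 2.12(2) and 3.3), one extracts a finite bound on $\hd_1(\Sigma_b V_F)$, and hence on $\hd_1(V_F)$, closing the induction. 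The main obstacle of the whole argument is precisely this final bootstrap: turning the shift-stabilization of positive homology, together with the $\sharp$-filtrations of the derivatives, into a genuine finite bound on $\hd_1(\Sigma_b V_F)$ --- all without invoking Theorem~\ref{abel}, whose proof will itself depend on the present theorem.
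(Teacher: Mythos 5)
Your setup tracks the paper's own strategy quite closely: reduce to the torsion-free quotient $V_F$, induct on $\gd(V_F)$, use Proposition \ref{dervtor} to get $\td(DV_F)<\infty$, and then exploit the finite regularity of the degree-wise coherent module $DV_F$ via Theorem \ref{finreg}. The reduction to $V_T$ and the long exact sequence bookkeeping are fine. The problem is the step you yourself flag as ``the main obstacle'': it is a genuine gap, not a routine verification. Shift-stabilization is simply too weak. Knowing that $D(\Sigma_b V_F)=\Sigma_b(DV_F)$ is $\sharp$-filtered for $b\geq b_0$ only gives you $H_i(\Sigma_b V_F)\cong H_i(\Sigma_{b+1}V_F)$ for $i\geq 1$ and $b\geq b_0$, i.e.\ that the positive homology becomes constant in $b$; it says nothing about the degree of that stable value, which could a priori be infinite. (Equivalently: what you would need is that $\Sigma_b V_F$ itself becomes $\sharp$-filtered at some finite $b$, and ``torsion-free with $\sharp$-filtered derivative implies $\sharp$-filtered'' is not among the results you may quote --- proving it is essentially the same difficulty you are trying to bypass.) Likewise, the length-$b$ sequence $0\to V_F\to\Sigma_b V_F\to D_bV_F\to 0$ with $D_bV_F$ degree-wise coherent only helps once you already know $\Sigma_b V_F$ is homology-acyclic; otherwise it just bounds $\hd_1(V_F)$ by $\max\{\hd_2(D_bV_F),\hd_1(\Sigma_bV_F)\}$, and you are back where you started.

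The paper closes exactly this hole with a different mechanism. For torsion-free $V$ one has $H_1^D(V)=0$, so the two-row Grothendieck spectral sequence for $D^b\circ D$ degenerates and gives natural isomorphisms $H_i^{D^{b+1}}(V)\cong H_i^{D^b}(DV)$ for all $i,b\geq 1$. Since $DV$ is degree-wise coherent by your induction, Proposition \ref{dervdervprop} gives $\deg H_i^{D^b}(DV)<\infty$, and the isomorphisms transfer this finiteness to $V$ itself. The quantitative form of Nagpal's theorem, Theorem \ref{dreg} (namely $N(V)=\max_b\deg H_1^{D^b}(V)$, with vanishing for $b\gg 0$), then shows that $\Sigma_{N(V)}V$ is $\sharp$-filtered for a \emph{finite} $N(V)$; the long exact homology sequence of $0\to V\to\Sigma_{N(V)}V\to D_{N(V)}V\to 0$, together with Theorems \ref{homacyclic} and \ref{finreg} applied to $D_{N(V)}V$, then gives $\hd_1(V)\leq\hd_2(D_{N(V)}V)<\infty$. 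So the missing idea in your writeup is precisely this transfer of the derived-derivative finiteness from $DV_F$ up to $V_F$ (or some substitute forcing $\Sigma_bV_F$ to be $\sharp$-filtered at a finite $b$); without it, the induction does not close.
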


\begin{proof}
We have already seen the forward direction. Conversely, assume that $\gd(V) < \infty$ and $\td(V) < \infty$. Then we have an exact sequence
\[
0 \rightarrow V_T \rightarrow V \rightarrow V_F \rightarrow 0
\]
where $V_T$ is torsion, and $V_F$ is torsion free. Applying the homology functor, it follows that
\[
\deg(H_1(V)) \leq \max\{\deg(H_1(V_T)), \deg(H_1(V_F))\}
\]
It is easily seen that $\deg(H_1(V_T)) < \infty$, and therefore it suffices to show that $\deg(H_1(V_F))$ is finite. In particular, we may assume without loss of generality that $V$ is torsion free.\\

Assuming that $V$ is torsion free, we have an exact sequence
\[
0 \rightarrow V \rightarrow \Sigma V \rightarrow DV \rightarrow 0
\]
where $\Sigma V$ is also torsion free. Proposition \ref{dervtor} now implies that $\td(DV) < \infty$. We also know, however, that $\gd(DV) < \gd(V) < \infty$ by Proposition \ref{dervprop}. Applying induction on the generating degree, we may assume that $DV$ is degree-wise coherent. Proposition \ref{dervdervprop} implies that $\deg(H_i^{D^b}(DV)) < \infty$ for all $i,b$.\\

Next, we claim that for all $i,b \geq 1$, $H_i^{D^b}(DV) \cong H_i^{D^{b+1}}(V)$. To see this, we compute the derived functors of $D^{b+1}$, when viewed as the composition $D^{b} \circ D$. Proposition \ref{dervdervprop} implies the Grothendieck spectral sequence associated to this composition only has two rows. It therefore degenerates to the long exact sequence 
\[
\ldots \rightarrow H_{i-1}^{D^b}(H_1^D(V)) \rightarrow H_i^{D^b}(DV) \rightarrow H_{i}^{D^{b+1}}(V) \stackrel{\partial}\rightarrow H_{i-1}^{D^b}(H_1^D(V)) \rightarrow \ldots
\]
The fact that $V$ is torsion-free implies $H_1^D(V) = 0$, and therefore $H_i^{D^b}(DV) \cong H_i^{D^{b+1}}(V)$ for all $i$.\\

Recall that we have shown that $DV$ is degree-wise coherent. The above isomorphisms therefore imply that $\deg(H_i^{D^b}(V)) < \infty$ for all $i,b$. Theorem \ref{nagpaln} now implies that $\Sigma_b V$ is $\sharp$-filtered for $b \gg 0$. In particular, we have an exact sequence
\[
0 \rightarrow V \rightarrow \Sigma_{N(V)}V \rightarrow D_{N(V)}V \rightarrow 0
\]
By assumption $V$ is generated in finite degree, and therefore $D_{N(V)}V$ is degree-wise coherent. Applying the homology functor, and using Theorems \ref{homacyclic} and \ref{finreg}, we conclude that $\deg(H_1(V)) < \infty$, as desired.\\
\end{proof}

\begin{remark}
The author's interest in proving the above theorem was heavily influenced by recent work of Li \cite{L3}. In that work, Li argues the forward direction of the theorem, and leaves the converse as a conjecture. The author would like to thank Professor Li for pointing him in the direction of this problem.\\
\end{remark}

\begin{remark}
It is important that one develop an intuition for why one would suspect Theorem \ref{altchar} is true. In the work of Li and the author \cite[Theorem F]{LR}, it is shown that the regularity of a finitely generated $\FI_G$-module, where $G$ is finite and $k$ is Noetherian, can be bound in terms of the torsion degrees of its local cohomology modules (see Definition \ref{lc}). Li has shown that the higher local cohomology modules can be bounded entirely in terms of the generating degree \cite{L2}. Put together, it follows that the regularity of a finitely generated $\FI_G$-module is bounded by a constant depending only on its torsion degree and its generating degree. Theorem \ref{altchar} implies that these bounds on regularity will continue to hold even if we do not assume that the module is finitely generated.\\
\end{remark}

\subsection{The category $\FI_G\Mod^{coh}$}

In this section we consider the category of degree-wise coherent modules, and examine some of its technical properties. The main result of this section will be to show that $\FI_G\Mod^{coh}$ is abelian. We once again note that the category of finitely generated $\FI_G$-modules is only known to be abelian when $k$ is Noetherian, and $G$ is polycyclic-by-finite. This would seem to indicate that the property of being degree-wise coherent is often times better suited for homologically flavored questions about $\FI_G$-modules.\\

One recurring theme throughout the proofs in this section is Theorem \ref{altchar}. This theorem tells us that the property of being degree-wise coherent can be partially checked on the maximal torsion submodule. This will allow us to prove non-obvious facts about submodules of degree-wise coherent submodules. One example of this is the following.\\

\begin{proposition}\label{coh}
Let $V$ be a degree-wise coherent $\FI_G$-module, and let $V' \subseteq V$ be a submodule which is generated in finite degree. Then $V'$ is also degree-wise coherent.\\
\end{proposition}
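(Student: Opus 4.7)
The plan is to invoke Theorem \ref{altchar} and reduce the problem to a statement about torsion degrees. Since $V'$ is by hypothesis generated in finite degree, Theorem \ref{altchar} says that $V'$ is degree-wise coherent if and only if $\td(V') < \infty$. So the entire proof boils down to bounding the torsion degree of $V'$.

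The key observation is that torsion is inherited from supermodules. If $v \in V'_n$ is torsion in $V'$, meaning some transition map $(f,g)_{\as}$ of $V'$ kills $v$, then the same transition map of $V$ kills $v$ (since $V'$ is a submodule, the induced maps of $V'$ are restrictions of those of $V$). Hence the maximal torsion submodule $(V')_T$ embeds into $V_T$, giving
\[
\td(V') = \deg((V')_T) \leq \deg(V_T) = \td(V).
\]
Since $V$ is degree-wise coherent, Theorem \ref{altchar} (applied in the forward direction, which was already recorded as a lemma earlier in the section) gives $\td(V) < \infty$, and therefore $\td(V') < \infty$.

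With both $\gd(V') < \infty$ (by hypothesis) and $\td(V') < \infty$, a second application of Theorem \ref{altchar} — this time in the converse direction — yields that $V'$ is degree-wise coherent, completing the proof.

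There is no real obstacle here: the whole content has been absorbed into Theorem \ref{altchar}, and once that machinery is available the proof is essentially a one-line comparison of torsion submodules. The only thing worth emphasizing in the write-up is the functorial check that torsion in a submodule agrees with torsion inherited from the ambient module, so that the inclusion $(V')_T \hookrightarrow V_T$ is immediate.
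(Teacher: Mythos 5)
Your proof is correct and follows exactly the paper's argument: the torsion submodule of $V'$ embeds in that of $V$, so $\td(V') \leq \td(V) < \infty$, and Theorem \ref{altchar} (in both directions) finishes the job. No gaps.
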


\begin{proof}
Because $V'$ is a submodule of $V$, we must have $\td(V') \leq \td(V)$. Theorem \ref{altchar} now implies the proposition.\\
\end{proof}

Note that the above proposition justifies the terminology of coherence. Recall that a module $M$ over a commutative ring $R$ is said to be coherent if it is finitely presented, and every finitely generated submodule of $M$ is also finitely presented. It is well known that a module over a coherent ring is finitely presented if and only if it is coherent. When $k$ is a field of characteristic 0, Sam and Snowden's language of twisted commutative algebras imply that the category of $\FI$-modules is equivalent to the category of $GL_\infty$-equivariant modules over a polynomial ring in infinitely many variables \cite{SS3}. A polynomial ring in infinitely many variables over a field is coherent, and therefore Proposition \ref{coh} can be heuristically thought of as a consequence of this.\\

\begin{proposition}\label{needtwo}
Let,
\[
0 \rightarrow V' \rightarrow V \rightarrow V'' \rightarrow 0
\]
be an exact sequence of $\FI_G$-modules. Then any two of $V',V,$ or $V''$ are degree-wise coherent only if the third is as well.\\
\end{proposition}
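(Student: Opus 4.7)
The plan is to reduce the proposition to Theorem \ref{altchar} in each of the three cases, showing that the remaining module is both generated in finite degree and has finite torsion degree. The tools I will invoke repeatedly are the long exact sequences of the right exact functors $H_0$ and $D$, together with Propositions \ref{coh} and \ref{dervtor}.

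The simplest case is when $V'$ and $V''$ are both coherent. Applying $H_0$ to the short exact sequence gives $\gd(V)\le\max\{\gd(V'),\gd(V'')\}<\infty$. For torsion, since a torsion element of $V'$ is torsion in $V$ and torsion is preserved by quotients, one has $V'_T = V'\cap V_T$, producing an exact sequence $0\to V'_T \to V_T \to V''_T$; thus $\td(V) \le \max\{\td(V'),\td(V'')\}<\infty$, and Theorem \ref{altchar} concludes the case. When instead $V$ and $V''$ are coherent, the homology sequence $H_1(V'')\to H_0(V')\to H_0(V)$ bounds $\gd(V')$ by $\max\{\hd_1(V''),\gd(V)\}<\infty$; applying Proposition \ref{coh} to the coherent overmodule $V$ then yields the coherence of $V'$.

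The subtle case is when $V'$ and $V$ are coherent and we wish to show that $V''$ is coherent. Here $\gd(V'')\le\gd(V)<\infty$ is automatic, so the work is to bound $\td(V'')$. For this I plan to pass to the torsion-free quotients: since $V'_T = V'\cap V_T$, the induced map $V'_F\to V_F$ is injective, and a short diagram chase with the rows $0\to V_T\to V\to V_F\to 0$ produces an exact sequence
\[
0\to V_T/V'_T \to V'' \to V_F/V'_F \to 0.
\]
The leftmost term has finite degree as a quotient of $V_T$, which is bounded by $\td(V)<\infty$. For the rightmost term, $V'_F$ and $V_F$ are torsion-free and generated in finite degree, so Proposition \ref{dervtor} gives $\td(V_F/V'_F)<\infty$. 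Then $V''_T$ contains $V_T/V'_T$ and the quotient $V''_T/(V_T/V'_T)$ injects into $(V_F/V'_F)_T$; both pieces have finite degree, so $\td(V'')<\infty$, and Theorem \ref{altchar} closes the argument.

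The main obstacle is this last case: the quotient by a coherent submodule need not obviously have bounded torsion, and the essential input is Proposition \ref{dervtor} applied to the torsion-free quotients, which is exactly the technical content underlying the equivalence between degree-wise coherence and finite torsion degree.
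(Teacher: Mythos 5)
Your proof is correct, but it takes a genuinely different route from the paper. The paper argues entirely through the long exact sequence of homology functors applied to $0 \to V' \to V \to V'' \to 0$: this yields the bounds $\hd_1(V) \leq \max\{\hd_1(V'),\hd_1(V'')\}$, $\gd(V)\leq\max\{\gd(V'),\gd(V'')\}$; $\hd_1(V')\leq\max\{\hd_2(V''),\hd_1(V)\}$, $\gd(V')\leq\max\{\hd_1(V''),\gd(V)\}$; and $\hd_1(V'')\leq\max\{\gd(V'),\hd_1(V)\}$, $\gd(V'')\leq\gd(V)$, with Theorem \ref{finreg} supplying $\hd_2(V'')<\infty$ in the submodule case; in particular the quotient case is handled by a completely elementary bound, with no appeal to torsion. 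You instead recast coherence via Theorem \ref{altchar} as ``generated in finite degree and $\td<\infty$'' and argue torsion-theoretically: for the extension case you use $V'_T = V'\cap V_T$ and the exact sequence $0\to V'_T\to V_T\to V''_T$; for the submodule case you bound $\gd(V')$ by the homology sequence and invoke Proposition \ref{coh}; and for the quotient case you build the exact sequence $0\to V_T/V'_T\to V''\to V_F/V'_F\to 0$ and apply Proposition \ref{dervtor} to the torsion-free quotients. All of these steps check out (in particular the identification $V''/(V_T/V'_T)\cong V_F/V'_F$ and the injectivity of $V'_F\to V_F$ are correct), so your argument is valid. What the paper's route buys is brevity and a purely homological statement of where each hypothesis enters, with the quotient case needing nothing beyond the six-term sequence; what your route buys is a transparent explanation of why the quotient case is the delicate one --- its torsion is controlled exactly by Proposition \ref{dervtor}, the Church--Ellenberg input underlying Theorem \ref{altchar} --- at the cost of leaning on the heavier machinery of Theorem \ref{altchar} even in the easy extension case.
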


\begin{proof}
The above exact sequence induces the exact sequence,
\[
H_2(V'') \rightarrow H_1(V') \rightarrow H_1(V) \rightarrow H_1(V'') \rightarrow H_0(V') \rightarrow H_0(V) \rightarrow H_0(V'') \rightarrow 0.
\]
This implies the collection of bounds,

\begin{eqnarray}
\hd_1(V) \leq \max\{\hd_1(V'),\hd_1(V'')\}, \hspace{1cm} \gd(V) \leq \max\{\gd(V'),\gd(V'')\}\\
\hd_1(V') \leq \max\{\hd_2(V''),\hd_1(V)\}, \hspace{1cm} \gd(V') \leq \max\{\hd_1(V''),\gd(V)\}\\
\hd_1(V'') \leq \max\{\gd(V'),\hd_1(V)\}, \hspace{1cm} \gd(V'') \leq \gd(V).
\end{eqnarray}

If $V'$ and $V''$ are degree-wise coherent, then the first pair of bounds immediately implies the same about $V$. If we instead assume that $V''$ and $V'$ are degree-wise coherent, then Theorem \ref{finreg} implies that $\hd_2(V'') < \infty$. The second pair of bounds now imply that $V'$ is degree-wise coherent. Finally, if $V'$ and $V$ are degree-wise coherent then the third pair of bounds imply that $V''$ must be as well.\\
\end{proof}

This is all we need to prove the main theorem of this section.\\

\begin{theorem}\label{abelian}
The category $\FI_G\Mod^{coh}$ is abelian.\\
\end{theorem}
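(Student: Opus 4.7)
The plan is to treat $\FI_G\Mod^{coh}$ as a full subcategory of the ambient abelian category $\FI_G\Mod$, so that it suffices to verify closure under finite biproducts and under taking kernels, cokernels, and images; all other abelian axioms (zero object, every monomorphism being a kernel of its cokernel, every epimorphism being a cokernel of its kernel) then follow formally, since these constructions agree with those in $\FI_G\Mod$. The zero module is trivially degree-wise coherent, and for any $V,W \in \FI_G\Mod^{coh}$ the split short exact sequence $0 \to V \to V \oplus W \to W \to 0$ together with Proposition \ref{needtwo} shows that $V \oplus W$ is degree-wise coherent.

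The heart of the argument is the treatment of an arbitrary morphism $\phi : V \to W$ with $V,W$ degree-wise coherent. First I would show that $\im(\phi)$ lies in $\FI_G\Mod^{coh}$. Since $V$ admits a generating set concentrated in degrees at most $\gd(V) < \infty$, its image under $\phi$ generates $\im(\phi) \subseteq W$ in the same range of degrees. Thus $\im(\phi)$ is a submodule of the degree-wise coherent module $W$ which is itself generated in finite degree, so Proposition \ref{coh} yields that $\im(\phi)$ is degree-wise coherent. With $\im(\phi)$ now in hand, the two short exact sequences
\[
0 \to \ker(\phi) \to V \to \im(\phi) \to 0, \qquad 0 \to \im(\phi) \to W \to \coker(\phi) \to 0
\]
each have two of their three terms known to be degree-wise coherent; Proposition \ref{needtwo} therefore forces the remaining term, namely $\ker(\phi)$ and $\coker(\phi)$ respectively, to be degree-wise coherent as well.

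The only non-formal input is Proposition \ref{coh}, which itself reduces to Theorem \ref{altchar} via the elementary bound $\td(V') \leq \td(V)$ for any submodule $V' \subseteq V$. In other words, the genuine content needed for $\FI_G\Mod^{coh}$ to be abelian was already extracted in Theorem \ref{altchar}: once torsion degree controls coherence for modules generated in finite degree, closure of $\FI_G\Mod^{coh}$ under the abelian operations becomes a purely diagrammatic consequence of Propositions \ref{coh} and \ref{needtwo}. Accordingly, I do not expect a serious obstacle at this stage; the work is essentially assembly of results already in place.
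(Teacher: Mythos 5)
Your proposal is correct and follows essentially the same route as the paper: the key step in both is that $\im(\phi)$ is generated in finite degree (being a quotient of $V$) and has finite torsion degree (being a submodule of the target), so Theorem \ref{altchar} --- which you invoke via Proposition \ref{coh} --- makes it degree-wise coherent, after which Proposition \ref{needtwo} handles $\ker(\phi)$ and $\coker(\phi)$. The only addition is your explicit check of closure under finite direct sums, which the paper leaves implicit.
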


\begin{proof}
The only thing that needs to be checked is that $\FI_G\Mod^{coh}$ permits images, kernels and cokernels. That is, if $\phi:V \rightarrow V'$ is a morphism of degree-wise coherent modules, then we must show that $\ker(\phi),\im(\phi)$ and $\coker(\phi)$ are all degree-wise coherent. We have a pair of exact sequences
\begin{eqnarray*}
0 \rightarrow \ker(\phi) \rightarrow V \rightarrow \im(\phi) \rightarrow 0\\
0 \rightarrow \im(\phi) \rightarrow V' \rightarrow \coker(\phi) \rightarrow 0\\
\end{eqnarray*}
The module $\im(\phi)$ is generated in finite degree because it is a quotient of $V$, and $\td(\im(\phi)) < \infty$ because it is a submodule of $V'$. Theorem \ref{altchar} implies that $\im(\phi)$ is degree-wise coherent, whence $\ker(\phi)$ and $\coker(\phi)$ are as well by Proposition \ref{needtwo}.\\
\end{proof}

\begin{remark}
Li has also independently proven this theorem in his work \cite[Proposition 3.4]{L3}. His methods do not use Theorem \ref{altchar}.\\
\end{remark}

\section{Applications}

In this half of the paper, we consider applications of the machinery developed in previous sections. To start, we will define the infinite shift and derivative functors. Using these functors, we will describe a local cohomology theory for degree-wise coherent $\FI_G$-modules. Finally, we finish by proving a kind of local duality theorem for $\FI_G$-modules.\\

\subsection{The infinite shift and derivative functors}\label{infderv}

\begin{definition}\label{infderv2}
Let $V$ be an $\FI_G$-module. For each positive integer $a$, the transition map $(f^{n+a},\mathbf{1})_\as$, induced by the pair of the standard inclusion $f^{n+a}:[n+a] \rightarrow [n+a+1]$ and the trivial map into $G$, gives a map $\Sigma_a V \rightarrow \Sigma_{a+1}V$. The \textbf{infinite shift} of $V$ is the direct limit
\[
\Sigma_\infty V := \lim_{\rightarrow} \Sigma_a V
\]
The maps $(f^{n+a},\mathbf{1})_\as$ also induce maps $D_aV \rightarrow D_{a+1}V$. The \textbf{infinite derivative} of the module $V$ is the direct limit
\[
D_\infty V := \lim_{\rightarrow} D_aV
\]
\end{definition}

One should immediately note that if $V$ is finitely generated, then neither $\Sigma_\infty V$, nor $D_\infty V$ are necessarily finitely generated. These functors do preserve degree-wise coherence, as we shall now prove.\\

\begin{proposition}\label{infprop}
The infinite shift and derivative functors enjoy the following properties:
\begin{enumerate}
\item $\Sigma_\infty$ is exact, and $D_\infty$ is right exact;
\item for all $\FI_G$-modules $V$, there is an exact sequence
\[
V \rightarrow \Sigma_\infty V \rightarrow D_\infty V \rightarrow 0.
\]
$V$ is torsion-free if and only if the map $V \rightarrow \Sigma_\infty V$ is injective;
\item for any $kG_n$-module $W$, $\Sigma_\infty M(W) \cong M(W) \oplus Q$ where $Q$ is some free-module generated in degree $< r$, while $D_\infty M(W) \cong Q$. In particular, both the infinite shift and derivative functors preserve $\sharp$-filtered objects;
\item if $\gd(V) \leq d$ is finite, then $\gd(\Sigma_\infty V) \leq d$ and $\gd(D_\infty V) < d$;
\item if $V$ is degree-wise coherent, then $\Sigma_\infty V$ is $\sharp$-filtered, and $D_\infty V$ is degree-wise coherent.\\
\end{enumerate}
\end{proposition}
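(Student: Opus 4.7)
The plan is to handle the five parts in the given order, exploiting that each of the infinite-stage functors is a filtered colimit of the finite-stage functors already analyzed in Propositions \ref{dervprop} and \ref{dervdervprop}. For (1), since filtered colimits of $k$-modules are exact and exactness of $\FI_G$-module maps is checked pointwise, combining this with Proposition \ref{dervprop}(3) immediately gives that $\Sigma_\infty$ is exact and $D_\infty$ is right exact. For (2), I would pass the four-term exact sequence
\[
0 \to H_1^{D_a}(V) \to V \to \Sigma_a V \to D_a V \to 0
\]
of Proposition \ref{dervdervprop}(2) through the filtered colimit over $a$; the transition maps on $\Sigma_a V$ and $D_a V$ come from the structure of the shift, while $H_1^{D_a}(V) \to H_1^{D_{a+1}}(V)$ is the inclusion $\ker \tau_a \subseteq \ker \tau_{a+1}$. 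Exactness of the colimit produces the stated sequence and identifies $\ker(V \to \Sigma_\infty V)$ with $\lim_{\rightarrow} H_1^{D_a}(V)$. Since an element $v \in V_n$ satisfies $\tau_a(v) = 0$ for some $a$ if and only if it is torsion, this kernel is exactly the torsion submodule of $V$, giving the injectivity criterion.

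The main work is in (3). Iterating Proposition \ref{dervprop}(4) yields a decomposition $\Sigma_a M(W) \cong M(W) \oplus F_a$, where $F_a$ is a finite direct sum of free modules generated in degrees strictly less than $r$; combinatorially, the summands correspond to how many elements of $[r]$ get mapped into the newly added block $\{n+1,\dots,n+a\}$ under a morphism $[r] \to [n+a]$. I would then verify that the transition map $\Sigma_a M(W) \to \Sigma_{a+1} M(W)$ preserves the $M(W)$-summand identically (it corresponds to morphisms that avoid the final letter) and embeds $F_a$ into $F_{a+1}$. Passing to the colimit then gives $\Sigma_\infty M(W) \cong M(W) \oplus Q$ with $Q = \lim_{\rightarrow} F_a$ a (possibly infinite) direct sum of free modules in degrees $< r$, hence itself free. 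The exact sequence from (2) applied to $M(W)$ identifies $D_\infty M(W) \cong Q$. Closure under $\sharp$-filtrations follows because $\Sigma_\infty$ and $D_\infty$ commute with arbitrary direct sums (they are pointwise filtered colimits), so they send free modules to free modules.

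Parts (4) and (5) are then short. For (4), $H_0$ is defined as a pointwise cokernel and therefore commutes with filtered colimits, so $\gd(\Sigma_\infty V) = \sup_a \gd(\Sigma_a V) \leq d$ and $\gd(D_\infty V) = \sup_a \gd(D_a V) < d$ by Proposition \ref{dervprop}(2). For (5), assume $V$ is degree-wise coherent and take $b = N(V)$ from Theorem \ref{nagpal}, so that $\Sigma_b V$ is $\sharp$-filtered; a cofinality reindexing gives $\Sigma_\infty V \cong \Sigma_\infty(\Sigma_b V)$, reducing us to the $\sharp$-filtered case. Applying the exact functor $\Sigma_\infty$ to a $\sharp$-filtration of $\Sigma_b V$ and invoking (3) on each free quotient produces a $\sharp$-filtration of $\Sigma_\infty V$. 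Since $\sharp$-filtered modules are automatically degree-wise coherent, the exact sequence $V \to \Sigma_\infty V \to D_\infty V \to 0$ then presents $D_\infty V$ as a cokernel of a morphism between degree-wise coherent modules, and Theorem \ref{abelian} provides its coherence. The main obstacle is the bookkeeping in (3): one must show the decomposition is visibly compatible with the transition maps at each finite stage and survives the colimit, a point that is clear in principle but requires some care to state without ambiguity.
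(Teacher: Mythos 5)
Your proposal is correct and follows essentially the same route as the paper's proof: realize $\Sigma_\infty$ and $D_\infty$ as filtered colimits of the finite shift and derivative functors, use exactness of filtered colimits together with Propositions \ref{dervprop} and \ref{dervdervprop} for parts (1)--(2), iterate the decomposition (\ref{freeshift}) for part (3), and invoke Theorem \ref{nagpal} for part (5). The differences are cosmetic --- you push the four-term sequence through the colimit where the paper argues the torsion criterion elementwise, and you use Theorem \ref{abelian} to get coherence of $D_\infty V$ --- and you are if anything more explicit about the compatibility bookkeeping in (3) than the paper is.
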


\begin{proof}
The first statement follows from Proposition \ref{dervprop}, as well as the exactness of filtered colimits.\\

Write $\omega$ for the poset category of the natural numbers. We define the functors $F_i: \omega \rightarrow \FI_G\Mod$, $i = 1,2,3$ as follows:
\[
F_1(a) = V, \hspace{1cm} F_2(a) = \Sigma_aV, \hspace{1cm} F_3(a) = D_aV.
\]
Note that $F_1$ maps all morphisms of $\omega$ to the identity, while $F_2$ and $F_3$ map the morphisms of $\omega$ to the previously discussed maps $\Sigma_aV \rightarrow \Sigma_{a+1} V$ and $D_aV \rightarrow D_{a+1}V$. Then all relevant definitions imply there is an exact sequence
\[
F_1 \rightarrow F_2 \rightarrow F_3 \rightarrow 0
\]
Applying the exact direct limit functor to this exact sequence implies the first half of the second claim. If $V$ is torsion free, then the map $F_1 \rightarrow F_2$ is exact by definition of torsion, and this will be preserved after taking direct limits. Conversely, assume that $V$ has torsion. In particular, there is an element $v \in V_n$ for some $n$, such that $v$ is in the kernel of some transition map $V_n \rightarrow V_m$. In this case, every transition map to $V_r$, with $r \geq m$, will also contain $v$ in its kernel. In particular, $v$ will be an element in the kernel of the maps $V \rightarrow \Sigma_a V$ for all $a > 0$. This implies that the element $v$ is in the kernel of the map $V \rightarrow \Sigma_\infty V$.\\

The fact that $\Sigma_\infty M(W)$ takes the prescribed form follows immediately from Proposition \ref{dervprop} and (\ref{freeshift}). The statement about the infinite derivative follows from the second part of this proposition.\\

The fourth statement follows from the first statement and the third.\\

The fifth statement follows from the fourth, as well as Theorem \ref{nagpal}.\\
\end{proof}

While the infinite shift and derivative functors may be harder to compute than their finite counter-parts, they allow us to more uniformly state certain theorems. For instance, we will see that infinite shifts can be used to fix the issue of functoriality of the complex $\C^\dt V$. We will also see that the infinite derivative functor can be used to prove a kind of local duality for $\FI_G$-modules.\\

The above proposition implies that the functors $\Sigma_\infty$ and $D_\infty$ can be considered as endofunctors of the abelian category $\FI_G\Mod^{coh}$. This proposition also tells us that $D_\infty$ admits left derived functors in this category.\\

\begin{definition}
For each $b \geq 1$, we will write $H_i^{D_\infty^b}:\FI_G\Mod^{coh} \rightarrow \FI_G\Mod^{coh}$ to denote the $i$-th derived functor of $D_\infty^b$.\\
\end{definition}

\begin{proposition}\label{infdervdervprop}
The functors $H_i^{D_\infty^b}$ enjoy the following properties:
\begin{enumerate}
\item for all degree-wise coherent modules $V$, there is an exact sequence
\[
0 \rightarrow H_1^{D_\infty}(V) \rightarrow V \rightarrow \Sigma_\infty V \rightarrow D_\infty V \rightarrow 0;
\]
In particular, if $V$ is torsion free, then $H_1^{D_\infty}(V) = 0$;
\item If $V$ is $\sharp$-filtered, then $H_i^{D_\infty^b}(V) = 0$ for all $i,b \geq 1$;
\item for all degree-wise coherent modules $V$, and all $b,i\geq 1$, $\deg(H_i^{D_\infty^b}(V)) < \infty$.\\
\end{enumerate}
\end{proposition}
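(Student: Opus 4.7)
The plan is to establish the three parts in order, leveraging the finite analogues already available from Proposition \ref{dervdervprop}.

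For part (1), the idea is to realize the four-term exact sequence as a filtered colimit of its finite analogues. By Proposition \ref{dervdervprop}(2), for every $a \geq 1$ there is an exact sequence
\[
0 \to H_1^{D_a}(V) \to V \to \Sigma_a V \to D_a V \to 0,
\]
and applying $\varinjlim_a$ (which is exact on $\FI_G\Mod$) produces the desired sequence once one identifies $\varinjlim_a H_1^{D_a}(V)$ with $H_1^{D_\infty}(V)$. Fixing a projective resolution $P_\bullet \to V$, one has $D_\infty P_\bullet = \varinjlim_a D_a P_\bullet$, and since homology commutes with filtered colimits in $\FI_G\Mod$, $H_1^{D_\infty}(V) = \varinjlim_a H_1^{D_a}(V)$. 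Each $H_1^{D_a}(V)$ is the kernel of $\tau_a \colon V \to \Sigma_a V$, so an element lies in the increasing union of these kernels precisely when some transition map annihilates it, i.e.\ exactly when it is torsion; hence the colimit equals $V_T$. The torsion-free case follows immediately.

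For part (2), I induct on the length of a $\sharp$-filtration (handling $b = 1$) and then on $b$. The base of the inner induction is $V = M(W)$: this is torsion-free, so $H_1^{D_\infty}(M(W)) = 0$ by part (1), and applying $D_\infty$ to a projective resolution $M(P_\bullet) \to M(W)$ arising from a projective resolution $P_\bullet \to W$ of $kG_r$-modules yields an exact complex, which one sees by comparing with its $\Sigma_\infty$-image and using the splitting $\Sigma_\infty M(W) \cong M(W) \oplus D_\infty M(W)$ from Proposition \ref{infprop}(3) together with exactness of $\Sigma_\infty$ and of $M(-)$. The long exact sequence then propagates $D_\infty$-acyclicity along the $\sharp$-filtration. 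For $b \geq 2$ I invoke the Grothendieck spectral sequence
\[
E^2_{p,q} = H_p^{D_\infty}\bigl(H_q^{D_\infty^{b-1}}(V)\bigr) \Rightarrow H_{p+q}^{D_\infty^{b}}(V),
\]
which applies because $D_\infty^{b-1}$ preserves $\sharp$-filteredness by iterating Proposition \ref{infprop}(3) and hence sends projectives to $D_\infty$-acyclics. If $V$ is $\sharp$-filtered, the $q \geq 1$ row vanishes by induction on $b$, and the $p \geq 1$ column vanishes because $D_\infty^{b-1} V$ is $\sharp$-filtered; thus $H_i^{D_\infty^b}(V) = 0$ for all $i \geq 1$.

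For part (3), the key observation is that the finite shift $\Sigma_c$ commutes with every $H_i^{D_\infty^b}$. Indeed $\Sigma_c$ is exact (Proposition \ref{dervprop}(3)), preserves $\sharp$-filtered modules (Proposition \ref{dervprop}(4)), and satisfies $\Sigma_c D_a = D_a \Sigma_c$ on the nose, so passing to the colimit yields $\Sigma_c D_\infty = D_\infty \Sigma_c$. Picking a $\sharp$-filtered resolution $F_\bullet \to V$, part (2) makes each $F_j$ a $D_\infty^b$-acyclic object, and $\Sigma_c F_\bullet \to \Sigma_c V$ is again $\sharp$-filtered, so computing both sides from these resolutions produces a natural isomorphism $\Sigma_c H_i^{D_\infty^b}(V) \cong H_i^{D_\infty^b}(\Sigma_c V)$. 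Taking $c = N(V)$ from Definition \ref{nagpaln}, Theorem \ref{nagpal} makes $\Sigma_c V$ $\sharp$-filtered, whence part (2) forces $H_i^{D_\infty^b}(\Sigma_c V) = 0$ for $i \geq 1$. Therefore $\Sigma_c H_i^{D_\infty^b}(V) = 0$, which means $H_i^{D_\infty^b}(V)_n = 0$ for every $n \geq N(V)$, so $\deg(H_i^{D_\infty^b}(V)) < N(V) < \infty$.

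The main obstacle is the identification in part (1): showing $H_1^{D_\infty}(V) = \varinjlim_a H_1^{D_a}(V)$ requires care in interchanging the projective-resolution computation of derived functors with the filtered colimit defining $D_\infty$, and this interchange is what forces one to work with resolutions by modules that are simultaneously adapted to every $D_a$. Once part (1) is available, the other two parts are essentially formal: part (2) is a two-step induction via long exact sequences and a standard Grothendieck spectral sequence, and part (3) reduces to the single observation that shifts commute with $H_i^{D_\infty^b}$, combined with Nagpal's structure theorem.
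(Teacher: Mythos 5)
Your proposal is correct in substance, but it follows a genuinely different route from the paper. The paper derives everything from a single six-term sequence (\ref{exactder}): it takes a presentation $0 \to K \to F \to V \to 0$, applies $D_\infty^b$, $\Sigma_\infty D_\infty^b$, $D_\infty^{b+1}$ and the snake lemma (using that $D_\infty^b(F)$ is $\sharp$-filtered, hence torsion free), obtaining part (1) as the case $b=0$, part (2) by induction on $b$ via torsion-freeness of $\sharp$-filtered modules and their infinite derivatives, and part (3) by induction on $b$ using Theorem \ref{altchar} together with the observation that $\Sigma_\infty$ kills modules of finite degree. You instead get (1) by writing the four-term sequence as $\varinjlim_a$ of the sequences in Proposition \ref{dervdervprop}(2) and identifying $H_i^{D_\infty} \cong \varinjlim_a H_i^{D_a}$ on a fixed projective resolution; this is sound (the colimit of the kernels $\ker\tau_a$ is exactly $V_T$), though the compatibility of the identifications $H_1^{D_a}(V)\cong\ker\tau_a$ with the structure maps in $a$ is the one naturality check you should spell out, as you yourself note. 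Your (2), via the resolution $M(P_\bullet)\to M(W)$ and the Grothendieck spectral sequence for $D_\infty\circ D_\infty^{b-1}$, is fine (and in fact your short-exact-sequence-of-complexes argument shows $D_\infty$-acyclicity of \emph{any} torsion-free module, which streamlines the base case). Your (3) is the most interesting divergence: instead of the paper's induction through (\ref{exactder}) and Theorem \ref{altchar}, you commute a finite shift past the derived functors and invoke Theorem \ref{nagpal}, which even yields the sharper statement $\deg(H_i^{D_\infty^b}(V)) < N(V)$. One inaccuracy there: $\Sigma_c D_a = D_a\Sigma_c$ does \emph{not} hold ``on the nose''---at level $n$ the maps $\Sigma_c\tau_a^V$ and $\tau_a^{\Sigma_c V}$ into $\Sigma_{a+c}V$ are induced by different injections $[n+c]\hookrightarrow[n+a+c]$, differing by the block permutation swapping the last $c$ and the inserted $a$ elements. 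These permutations assemble into a natural automorphism of $\Sigma_{a+c}V$ carrying one map to the other, so $\Sigma_c D_a \cong D_a \Sigma_c$ naturally (and hence $\Sigma_c D_\infty \cong D_\infty\Sigma_c$ after passing to the colimit), but this commutation is not stated anywhere in the paper and needs that short argument or a citation before your part (3) is complete.
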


\begin{proof}
Let
\[
0 \rightarrow K \rightarrow F \rightarrow V \rightarrow 0
\]
be a presentation for $V$. Then we have a commutative diagram with exact rows
\[
\begin{CD}
@. D_\infty^b(K) @>>> \Sigma_\infty D_\infty^b(K) @>>> D_{\infty}^{b+1}(K) @>>> 0\\
@. @VVV                      @VVV                                 @VVV\\
0 @>>> D_\infty^b(F) @>>> \Sigma_\infty D_\infty^b(F) @>>> D_\infty^{b+1}(F) @>>> 0\\
\end{CD}
\]
Note that the second row is exact on the left, as $D_\infty^b(F)$ is $\sharp$-filtered, and therefore it is torsion free.
 Applying the snake lemma, we obtain a long exact sequence
\begin{eqnarray}
H_1^{D_\infty^b}(V) \rightarrow \Sigma_\infty H_1^{D_\infty^b}(V) \rightarrow H_1^{D_\infty^{b+1}}(V) \rightarrow D_\infty^b(V) \rightarrow \Sigma_{\infty}D_\infty^b(V) \rightarrow D_{\infty}^{b+1}(V) \rightarrow 0 \label{exactder}
\end{eqnarray}

Now assume that $b = 0$. In this case the above becomes the claimed exact sequence of the first part of the proposition.\\

We can prove the second statement by induction on $b$. Note that Theorem \ref{homacyclic} implies that any presentation of a $\sharp$-filtered module will necessarily have a $\sharp$-filtered first syzygy. It follows that it suffices to prove the second claim in the proposition for $i = 1$. Because $\sharp$-filtered objects are torsion free, the first part of this proposition implies the claim for $b = 1$. Otherwise, the exact sequence (\ref{exactder}) degenerates to,
\[
0 \rightarrow H_1^{D_\infty^{b+1}}(V) \rightarrow D_\infty^b(V) \rightarrow \Sigma_{\infty}D_\infty^b(V)
\]
Using the fact that the infinite derivative of a $\sharp$-filtered object is still $\sharp$-filtered, as well as the fact that $\sharp$-filtered objects are torsion free, we obtain our desired vanishing.\\

Straight forward homological dimension shifting arguments imply that it suffices to prove the third claim for $i = 1$. We proceed by induction on $b$. If $b = 1$, then the first statement along with Theorem \ref{altchar} imply that $H_1^{D_\infty}(V)$ has finite degree. Assume that the statement is true for some integer $b \geq 1$, and consider the sequence (\ref{exactder}). By induction we know that $H_1^{D_\infty^b}(V)$ has finite degree, and therefore $\Sigma_\infty H_1^{D_\infty^b}(V) = 0$. The above sequence will simplify to
\[
0 \rightarrow H_1^{D_\infty^{b+1}}(V) \rightarrow D_\infty^bV \rightarrow \Sigma_{\infty}D_\infty^bV \rightarrow D_{\infty}^{b+1}V \rightarrow 0.
\]
Proposition \ref{infprop} implies that $D_\infty^b(V)$ is degree-wise coherent, and therefore it has finite torsion degree by Theorem \ref{altchar}. We conclude that $H_1^{D_\infty^{b+1}}(V)$ has finite degree, as desired.\\
\end{proof}

To finish this section, we define an improved version of the complex $\C^\dt V$. This new complex will share almost all of $\C^\dt V$'s most important properties, while having the advantage of being functoral in $V$.\\

\begin{definition}\label{infcomplex}
Let $V$ be a degree-wise coherent $\FI_G$-module. Then Theorem \ref{nagpal} and Proposition \ref{infprop} imply that $\Sigma_\infty V$ is $\sharp$-filtered, and that there is an exact sequence
\[
V \rightarrow \Sigma_\infty V = F^0 \rightarrow D_\infty V \rightarrow 0
\]
where $D_\infty V$ is also degree-wise coherent with strictly smaller generating degree. Repeating this process, we obtain a complex
\[
\C_\infty^\dt V: 0 \rightarrow V \rightarrow F^0 \rightarrow F^1 \rightarrow \ldots \rightarrow F^n \rightarrow 0.
\]
Note that by construction,
\[
H^i(\C_\infty^\dt V) \cong \ker(D_{\infty}^{i+1} V \rightarrow \Sigma_\infty D_{\infty}^{i+1} V) \cong H_1^{D_\infty}(D_{\infty}^{i+1} V)
\]
where $D_\infty^0$ is the identity functor by convention. In particular, the cohomology modules of $\C_\infty^{\dt}$ all have finite degree.\\
\end{definition}

\subsection{Local Cohomology} \label{locsection}

In this section, we record results about local the local cohomology of the modules in $\FI\Mod^{coh}$. These facts were proven about finitely generated modules in \cite{LR}, and the proofs from that paper will work in this context as well, thanks to Theorems \ref{abelian} and \ref{nagpal}. The fact that these two results imply that the work of \cite{LR} will hold for degree-wise coherent modules was also noted by Li in \cite{L3}.\\

\begin{definition}\label{lc}
Recall that every $\FI_G$-module $V$ fits into an exact sequence
\[
0 \rightarrow V_T \rightarrow V \rightarrow V_F \rightarrow 0
\]
where $V_T$ is torsion, and $V_F$ is torsion free. The \textbf{0-th local cohomology functor} is defined by
\[
H_\mi^0(V) := V_T
\]
The category $\FI_G\Mod$ is Grothendieck, and therefore we can define the right derived functors of $H_\mi^0$. The $i$-th derived functor of $H^0_\mi$ is denoted by $H^i_\mi$, and is known as the \textbf{$i$-th local cohomology functor}.\\
\end{definition}

One of the main results of the paper \cite[Theorem E]{LR}, is that, when working over a Noetherian ring, $H^i_\mi(V)$ is finitely generated whenever $V$ is. In this work we will show that $H^i_\mi(V)$ is degree-wise coherent whenever $V$ is. To do so, we first record the following alternative definition of local cohomology.\\

\begin{definition}
For each integer $r \geq 0$, and each integer $n \geq 1$, we define the module $M(r)/\mi^n M(r)$ to be the quotient of $M(r)$ by the submodule generated by $M(r)_{r+n}$. Then we define the functor $\fiHom(k\FI_G/\mi^n,\dt):\FI_G\Mod \rightarrow \FI_G\Mod$ by
\[
\fiHom(k\FI_G/\mi^n,V)_r := \Hom_{\FI_G\Mod}(M(r)/\mi^n M(r),V)
\]
Note that a map $M(r)/\mi^nM(r) \rightarrow V$ is determined by a choice of element $V_r$, which is in the kernel of all transition maps into $V_{r+n}$. Given such a map $\phi:M(r)/\mi^nM(r) \rightarrow V$, and a morphism $(f,g):[r] \rightarrow [m]$ in $\FI_G$, we define $(f,g)_\as\phi$ to be the map $M(m)/\mi^nM(m) \rightarrow V$ which sends the identity in degree $m$ to $(f,g)_\as(\phi(id_r))$. This defines an $\FI_G$-module stucture on $\fiHom(k\FI_G/\mi^n,V)$. We use $\fiExt^i(k\FI_G/\mi^n,\dt)$ to denote the $i$-th derived functor of $\fiHom(k\FI_G/\mi^n,\dt)$.\\
\end{definition}

One important observation is that for each $r \geq 0$ and $n \geq 1$, there is a map
\[
M(r)/\mi^{n+1}M(r) \rightarrow M(r) / \mi^n M(r).
\]
This induces maps $\Hom_{\FI_G\Mod}(M(r)/\mi^n M(r),V) \rightarrow \Hom_{\FI_G\Mod}(M(r)/\mi^{n+1} M(r),V)$, which one may check are compatible with the induced maps of $\fiHom(k\FI_G/\mi^n,V)$. In particular, for any $V$ we obtain a morphism of $\FI_G$-modules
\[
\fiHom(k\FI_G/\mi^n,V) \rightarrow \fiHom(k\FI_G/\mi^{n+1},V).
\]
This also gives us maps
\[
\fiExt^i(k\FI_G/\mi^n,V) \rightarrow \fiExt^i(k\FI_G/\mi^{n+1},V)
\]
for each $i \geq 0$. This justifies the following proposition.\\

\begin{proposition}
There is an isomorphism of functors,
\[
H^0_\mi(\dt) \cong \lim_{\rightarrow} \fiHom(k\FI_G/\mi^n,V),
\]
inducing isomorphisms of derived functors
\[
H^i_\mi(\dt) \cong \lim_{\rightarrow} \fiExt(k\FI_G/\mi^n,V)
\]
\text{}\\
\end{proposition}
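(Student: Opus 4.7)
The plan is to verify the zeroth-level isomorphism by directly unpacking the definitions, and then to bootstrap to the derived-functor statement via a standard injective-resolution argument combined with exactness of filtered colimits. For the zeroth claim, I would begin by observing that since $M(r)$ is the free $\FI_G$-module on a generator in degree $r$, a homomorphism $M(r)\to V$ is determined by the image of $\mathrm{id}_r \in M(r)_r$, and this homomorphism factors through $M(r)/\mi^n M(r)$ exactly when the chosen element $v \in V_r$ is killed by every morphism $[r]\to[r+n]$. Taking the colimit in $n$, the union of these subspaces of $V_r$ is precisely the set of $v \in V_r$ killed by all transition maps into $V_{r+n}$ for some $n$, which by definition of torsion is $(V_T)_r = H^0_\mi(V)_r$. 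Under this identification the $\FI_G$-action $\phi \mapsto (f,g)_\as\phi$ prescribed in the definition above, which sends $\mathrm{id}_m$ to $(f,g)_\as(\phi(\mathrm{id}_r))$, is manifestly the restriction to $V_T$ of the ambient action on $V$; and the connecting maps $\fiHom(k\FI_G/\mi^n,V)\to \fiHom(k\FI_G/\mi^{n+1},V)$ induced by the surjections $M(r)/\mi^{n+1}M(r)\to M(r)/\mi^n M(r)$ correspond to the obvious inclusions of these subspaces. Naturality in $V$ is immediate from the functoriality of $\Hom$ and of $\lim_{\to}$.

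For the statement about derived functors, I would choose an injective resolution $V \to I^\dt$ in the Grothendieck category $\FI_G\Mod$ and note that filtered colimits are exact in $\FI_G\Mod$, since they are computed pointwise in $k\Mod$. Applying the zeroth-level isomorphism termwise to $I^\dt$ and interchanging $H^i$ with the filtered colimit yields
\[
H^i_\mi(V) = H^i\bigl(H^0_\mi(I^\dt)\bigr) \cong H^i\bigl(\lim_{\to} \fiHom(k\FI_G/\mi^n, I^\dt)\bigr) \cong \lim_{\to} H^i\bigl(\fiHom(k\FI_G/\mi^n, I^\dt)\bigr) = \lim_{\to} \fiExt^i(k\FI_G/\mi^n, V),
\]
which is the desired isomorphism. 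Naturality in $V$ and in $i$ (i.e., compatibility with the connecting maps of long exact sequences) follows because both sides arise as $\delta$-functors from $H^0_\mi$ computed on the same complex.

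The only real subtlety is in the first step: one must carefully check that the $\FI_G$-module structure defined on $\fiHom(k\FI_G/\mi^n, V)$ is the one obtained by restricting the $V$-action to torsion elements. A minor bookkeeping check embedded in this is that if $v\in V_r$ is killed by all morphisms $[r]\to[r+n]$, then for any $(f,g)\colon [r]\to[m]$ the element $(f,g)_\as v \in V_m$ is killed by all morphisms $[m]\to[m+j]$ with $m+j \geq r+n$, so it indeed lies in $\fiHom(k\FI_G/\mi^j, V)_m$ for $j$ large enough, and hence defines a well-defined class in the colimit. Once this matching of actions is in hand, the derived-functor statement is formal, since everything reduces to the exactness of filtered colimits in a Grothendieck category.
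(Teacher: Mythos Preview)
Your proof is correct and is precisely the argument the paper intends: the paper does not supply a formal proof of this proposition, instead stating it as ``justified'' by the preceding discussion of the $\FI_G$-module structure on $\fiHom(k\FI_G/\mi^n,V)$ and the transition maps, and leaving both the identification at level zero and the passage to derived functors to the reader. Your write-up is exactly how one fills in those details.

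One small comment on your bookkeeping check at the end: you phrase it as though the induced action $(f,g)_\as$ on $\fiHom(k\FI_G/\mi^n,V)$ might only be well-defined after passing to the colimit in $n$. In fact it is well-defined already at each finite stage. If $v\in V_r$ is killed by all morphisms $[r]\to[r+n]$, then since $G_{r+n}$ acts transitively on $\Hom_{\FI_G}([r],[r+n])$, $v$ is killed by every morphism $[r]\to[m']$ with $m'\geq r+n$; hence for any $(f,g):[r]\to[m]$ (so $m\geq r$) and any $(h,g'):[m]\to[m+n]$, the composite is a morphism $[r]\to[m+n]$ with $m+n\geq r+n$, and so it kills $v$. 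Thus $(f,g)_\as v$ lies in $\fiHom(k\FI_G/\mi^n,V)_m$ for the \emph{same} $n$, which is what makes $\fiHom(k\FI_G/\mi^n,V)$ an honest $\FI_G$-submodule of $V$ before taking any colimit. This is implicit in the paper's definition and does not affect the validity of your argument, but it streamlines the verification.
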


Using this alternative description, one then goes on to prove the following acyclicity results.\\

\begin{proposition}\label{acyclic}
Let $V$ be degree-wise coherent. If $V$ is either a torsion module, or a $\sharp$-filtered module, then
\[
H^i_\mi(V) = 0
\]
for all $i \geq 1$.\\
\end{proposition}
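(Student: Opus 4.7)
The plan is to adapt the proof given for finitely generated modules in \cite{LR}; as the section introduction notes, the arguments there use only two structural properties of the ambient category, namely abelianness and the structural theorem for shifts, which we have now established for $\FI_G\Mod^{coh}$ in Theorems \ref{abelian} and \ref{nagpal}. The first step is to invoke the preceding proposition to translate both claims into showing $\lim_n \fiExt^i(k\FI_G/\mi^n, V) = 0$ for $i \geq 1$, where the colimit is taken along the natural maps $\fiExt^i(k\FI_G/\mi^n, V) \to \fiExt^i(k\FI_G/\mi^{n+1}, V)$.

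For the $\sharp$-filtered case, I would perform a dévissage along the $\sharp$-filtration. The short exact sequences $0 \to V^{(i-1)} \to V^{(i)} \to M(W_i) \to 0$ combined with the long exact sequence of local cohomology reduce the statement to $V = M(W)$ for a single $kG_r$-module $W$. I would then induct on the generating degree $r$. The exact sequence $0 \to M(W) \to \Sigma_\infty M(W) \to D_\infty M(W) \to 0$ from Proposition \ref{infprop} splits via the explicit splitting $\Sigma_\infty M(W) \cong M(W) \oplus D_\infty M(W)$, and $D_\infty M(W)$ is $\sharp$-filtered of strictly smaller generating degree, so by induction $H^i_\mi(D_\infty M(W)) = 0$. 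Combining this splitting with the filtered colimit description $\Sigma_\infty M(W) = \lim_a \Sigma_a M(W)$, wherein each finite shift decomposes by part (4) of Proposition \ref{dervprop} into a finite sum of free modules of generating degree at most $r$, I would extract enough relations among these local cohomology groups to force $H^i_\mi(M(W)) = 0$ for $i \geq 1$.

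For the torsion case, the key input is that a degree-wise coherent torsion module has finite degree, by the lemma in Section 3.1. I would induct on $d = \deg(V)$. The top stratum $V_d$, considered as a submodule, is annihilated by every outgoing transition map and is thus supported only in degree $d$; the quotient $V/V_d$ is torsion of degree at most $d-1$. The long exact sequence of local cohomology together with the inductive hypothesis then reduces matters to showing $H^i_\mi = 0$ for $i \geq 1$ on torsion modules supported in a single degree. For these the computation is direct: the tautological resolution $0 \to \mi^n M(r) \to M(r) \to M(r)/\mi^n M(r) \to 0$ and its iterated syzygies are all generated in degree at least $r+n$ (since $\mi^n M(r)$ is degree-wise coherent and generated in degree $r+n$), so for $n$ large enough relative to the single support degree of $V$, applying $\Hom(-, V)$ yields a complex concentrated in homological degree zero, immediately giving the vanishing of higher Ext.

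The main obstacle will be the $\sharp$-filtered case. Unlike the torsion case, where finite support lets one ``run out'' of any projective resolution by a degree counting argument, free modules $M(W)$ admit no such cutoff, and each individual Ext group $\fiExt^i(k\FI_G/\mi^n, M(W))$ need not vanish for any fixed $n$; the vanishing is purely a colimit phenomenon. The delicate point is to organize the induction on generating degree so that the contributions from the non-top summands of $\Sigma_\infty M(W)$ and $D_\infty M(W)$ cancel appropriately, and to use Proposition \ref{infdervdervprop} to ensure that iterated $D_\infty$-constructions produce $\sharp$-filtered syzygies suitable for dimension shifting.
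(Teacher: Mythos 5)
Your torsion-case argument is essentially correct, and is in fact simpler than you present it: since a degree-wise coherent torsion module has finite degree $d$, for any $r$ and any $n$ with $r+n>d$ one can choose a projective resolution of $M(r)/\mi^n M(r)$ whose terms beyond the zeroth are direct sums of principal projectives $M(m)$ with $m\geq r+n$ (the first syzygy $\mi^n M(r)$ vanishes below degree $r+n$, and this persists for every later syzygy), so $\Ext^i_{\FI_G\Mod}(M(r)/\mi^n M(r),V)=0$ for all $i\geq 1$ already before passing to the colimit; no reduction to modules supported in a single degree is needed. Note also that the paper does not actually reprove this proposition: it records it as following from \cite{LR}, asserting that the arguments there go through verbatim once Theorems \ref{abelian} and \ref{nagpal} are in place.

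The $\sharp$-filtered half, which you yourself flag as the main obstacle, is a genuine gap, and the mechanism you propose cannot close it. The splitting $\Sigma_\infty M(W)\cong M(W)\oplus Q$ with $Q\cong D_\infty M(W)$ only says that $H^i_\mi(M(W))$ is a direct summand of $H^i_\mi(\Sigma_\infty M(W))$; combining this with the inductive vanishing of $H^i_\mi(Q)$, or chasing the long exact sequence of $0\to M(W)\to \Sigma_\infty M(W)\to D_\infty M(W)\to 0$, merely returns $H^i_\mi(\Sigma_\infty M(W))\cong H^i_\mi(M(W))$: the unknown group reproduces itself and is never forced to vanish. The same circularity afflicts any attempt with the finite shifts $\Sigma_a M(W)\cong M(W)\oplus(\text{lower degree})$, and your induction on generating degree also has no base case, since nothing is said about $M(W)$ with $W$ a $kG_0$-module. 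What is missing is an independent vanishing input, and here your guiding heuristic that ``the vanishing is purely a colimit phenomenon'' leads you astray: the essential input in \cite{LR} is a term-wise orthogonality statement, to the effect that $\fiExt^i(k\FI_G/\mi^n,F)=0$ for all $i,n\geq 1$ when $F$ is $\sharp$-filtered (equivalently, the vanishing of higher $\Ext$ groups of finite-degree torsion modules against $\sharp$-filtered modules, which is also what underlies the $\Ext$-theoretic description of depth recalled in Section 2). That vanishing must be established before taking the colimit over $n$ --- for instance by d\'evissage to $M(W)$ and a direct analysis of extensions of torsion modules by $M(W)$, or by importing the corresponding lemma of \cite{LR} --- rather than extracted from cancellations inside the colimit.
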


Next, we recall the complex $\C^\dt_\infty V$. By construction this complex is comprised of $\sharp$-filtered modules in its positive degrees, and its cohomologies are all degree-wise coherent torsion modules. The above proposition can therefore be used to prove the following.\\

\begin{theorem}\label{cohomologycoh}
Let $V$ be a degree-wise coherent module. Then there are isomorphisms for all $i \geq 0$,
\[
H^i_\mi(V) \cong H^{i-1}(\C^\dt_\infty V)
\]
In particular, if $V$ is degree-wise coherent, then the same is true of its local cohomology modules.\\
\end{theorem}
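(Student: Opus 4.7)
The plan is to compute $H^i_\mi(V)$ from the complex $\C^\dt_\infty V$ by running a dimension-shift argument on the local cohomology of its building blocks. Recall $\C^\dt_\infty V$ has the form $0 \to V \to F^0 \to F^1 \to \cdots \to F^n \to 0$, where each $F^j = \Sigma_\infty D_\infty^j V$ is $\sharp$-filtered (by Proposition \ref{infprop}(5)), and so is degree-wise coherent; consequently $H^i_\mi(F^j) = 0$ for all $i \geq 1$ by Proposition \ref{acyclic}, and also $H^0_\mi(F^j) = 0$ since $\sharp$-filtered modules are torsion-free. The case $i = 0$ of the theorem is then already settled: by Proposition \ref{infdervdervprop}(1) the kernel of the map $V \to F^0 = \Sigma_\infty V$ is $H_1^{D_\infty}(V)$, and by Proposition \ref{infprop}(2) this is exactly the maximal torsion submodule $H^0_\mi(V)$, giving $H^{-1}(\C^\dt_\infty V) \cong H^0_\mi(V)$.

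For $i \geq 1$, the central step is to prove the dimension shift
\[
H^i_\mi(W) \cong H^{i-1}_\mi(D_\infty W)
\]
for every degree-wise coherent $W$. To do so, I would split the four-term exact sequence of Proposition \ref{infdervdervprop}(1) into the two short exact sequences
\[
0 \to H^0_\mi(W) \to W \to W/H^0_\mi(W) \to 0 \quad\text{and}\quad 0 \to W/H^0_\mi(W) \to \Sigma_\infty W \to D_\infty W \to 0,
\]
and apply the long exact sequence in local cohomology to each. Theorem \ref{altchar} gives $\deg(H^0_\mi(W)) = \td(W) < \infty$, so $H^0_\mi(W)$ is a degree-wise coherent torsion module and by Proposition \ref{acyclic} contributes nothing in positive cohomological degree; the first long exact sequence therefore identifies $H^i_\mi(W) \cong H^i_\mi(W/H^0_\mi(W))$ for $i \geq 1$. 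The second long exact sequence, using $H^\bullet_\mi$-acyclicity of $\Sigma_\infty W$ together with the torsion-freeness of $W/H^0_\mi(W)$, then degenerates into $H^i_\mi(W/H^0_\mi(W)) \cong H^{i-1}_\mi(D_\infty W)$ for $i \geq 1$. Composing these two identifications yields the claimed dimension shift.

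Iterating the shift $i$ times gives $H^i_\mi(V) \cong H^0_\mi(D_\infty^i V)$ for $i \geq 1$, and by Proposition \ref{infprop}(2) together with Proposition \ref{infdervdervprop}(1) the right-hand side equals $H_1^{D_\infty}(D_\infty^i V)$, which Definition \ref{infcomplex} identifies with $H^{i-1}(\C^\dt_\infty V)$. The final coherence assertion follows because Definition \ref{infcomplex} already records that every cohomology module of $\C^\dt_\infty V$ has finite degree, and a torsion $\FI_G$-module of finite degree is trivially generated in finite degree with finite torsion degree, hence degree-wise coherent by Theorem \ref{altchar}.

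The only place where real care is needed is in verifying that $\Sigma_\infty W$ is $H^\bullet_\mi$-acyclic when $W$ is merely degree-wise coherent (rather than finitely generated); this uses that Proposition \ref{infprop}(5) promotes $\Sigma_\infty W$ to a $\sharp$-filtered module, and that $\sharp$-filtered modules are themselves degree-wise coherent so that Proposition \ref{acyclic} genuinely applies. Once this point is in place, the entire argument is a clean transcription of the corresponding proof for finitely generated modules given in \cite{LR}.
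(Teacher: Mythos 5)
Your proof is correct and follows essentially the argument the paper intends: the paper defers to the proofs in \cite{LR}, which come down to exactly the acyclicity of torsion and $\sharp$-filtered modules (Proposition \ref{acyclic}) applied to the complex $\C^\dt_\infty V$. Your explicit dimension-shifting via the two short exact sequences extracted from $0 \to H_1^{D_\infty}(W) \to W \to \Sigma_\infty W \to D_\infty W \to 0$ is just a careful transcription of that argument, and the index bookkeeping matching $H^0_\mi(D_\infty^i V)$ with $H^{i-1}(\C^\dt_\infty V)$ checks out.
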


This theorem has a long list of consequences, some of which we list now.\\

\begin{corollary}
Let $V$ be a degree-wise coherent module. Then $V$ is acyclic with respect to local cohomology if and only if there is an exact sequence
\[
0 \rightarrow V_T \rightarrow V \rightarrow V_F \rightarrow 0
\]
where $V_T$ is a torsion module, and $V_F$ is $\sharp$-filtered.\\
\end{corollary}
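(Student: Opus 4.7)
The plan is to prove both implications by leveraging the complex $\C^\dt_\infty V$ together with the isomorphism $H^i_\mi(V) \cong H^{i-1}(\C^\dt_\infty V)$ from Theorem \ref{cohomologycoh}, and the characterization of $\sharp$-filtered modules as the homology-acyclic degree-wise coherent objects from Theorem \ref{homacyclic}.

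For the backward direction, I would start with the given short exact sequence $0 \rightarrow V_T \rightarrow V \rightarrow V_F \rightarrow 0$, apply Proposition \ref{acyclic} to both $V_T$ (torsion) and $V_F$ ($\sharp$-filtered) to see each has vanishing higher local cohomology, and then read off $H^i_\mi(V) = 0$ for all $i \geq 1$ directly from the associated long exact sequence. This half is essentially free.

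For the forward direction, I would consider the canonical sequence $0 \rightarrow V_T \rightarrow V \rightarrow V_F \rightarrow 0$ with $V_T = H^0_\mi(V)$. Theorem \ref{altchar} shows that $V_T$ has bounded degree and so is generated in finite degree; Proposition \ref{coh} then gives that it is degree-wise coherent, and Proposition \ref{needtwo} transfers this to $V_F$. Using the long exact sequence in local cohomology together with the vanishing $H^i_\mi(V_T) = 0$ for $i \geq 1$ (Proposition \ref{acyclic}), the acyclicity of $V$ propagates to $V_F$; since $V_F$ is by construction torsion-free, we also have $H^0_\mi(V_F) = 0$. Applying Theorem \ref{cohomologycoh} to $V_F$ now shows that the entire augmented complex
\[
0 \rightarrow V_F \rightarrow F^0 \rightarrow F^1 \rightarrow \ldots \rightarrow F^n \rightarrow 0
\]
is exact, with each $F^i$ being $\sharp$-filtered by the construction of $\C^\dt_\infty$ (Proposition \ref{infprop}).

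The hard part is then extracting from such a resolution the conclusion that $V_F$ itself is $\sharp$-filtered, and I would handle this by descending induction along the resolution. Set $K^i := \ker(F^i \rightarrow F^{i+1})$, so that the resolution splits into short exact sequences $0 \rightarrow K^i \rightarrow F^i \rightarrow K^{i+1} \rightarrow 0$ with $K^n = F^n$ and $V_F \cong K^0$. Assuming inductively that $K^{i+1}$ is $\sharp$-filtered, the long exact sequence in homology applied to this short exact sequence, combined with the vanishing $H_j(F^i) = H_j(K^{i+1}) = 0$ for $j \geq 1$ coming from Theorem \ref{homacyclic}, forces $H_j(K^i) = 0$ for $j \geq 1$. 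Proposition \ref{needtwo} keeps $K^i$ degree-wise coherent at each step, so the reverse implication of Theorem \ref{homacyclic} then yields that $K^i$ is $\sharp$-filtered. Iterating down to $i = 0$ gives that $V_F \cong K^0$ is $\sharp$-filtered. The bookkeeping here — carrying both degree-wise coherence and homological acyclicity down through the resolution at each stage — is the one step I would want to write out with care.
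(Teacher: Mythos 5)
Your argument is correct and follows exactly the route the paper intends: the corollary is stated as a consequence of Theorem \ref{cohomologycoh} together with Proposition \ref{acyclic}, and your backward direction (long exact sequence plus acyclicity of torsion and $\sharp$-filtered modules) and forward direction (exactness of the augmented complex $\C^\dt_\infty V_F$, then peeling off syzygies with Theorem \ref{homacyclic} and Proposition \ref{needtwo}) are precisely the natural filling-in of the details the paper leaves implicit. No gaps; the descending induction is the right way to make the extraction of $\sharp$-filteredness from the exact $\sharp$-filtered resolution rigorous.
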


\begin{corollary} \label{dimension}
Let $V$ be a degree-wise coherent module. Then $H^i_\mi(V) = 0$ for $i \gg 0$, while 
\[
\depth(V) = \inf\{i \mid H^i_\mi(V) \neq 0\}
\]
\text{}\\
\end{corollary}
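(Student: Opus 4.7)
The first claim is immediate from Theorem \ref{cohomologycoh}: the complex $\C^\dt_\infty V$ of Definition \ref{infcomplex} is bounded, because $D_\infty$ strictly decreases generating degree on degree-wise coherent modules (Proposition \ref{infprop}, part (4)), so its recursive construction terminates after at most $\gd(V)+1$ steps. Hence $H^i_\mi(V) \cong H^{i-1}(\C^\dt_\infty V)$ vanishes once $i > \gd(V)+1$.

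For the depth identity, my plan is to first establish the intrinsic identification
\[
H^i_\mi(V) \;\cong\; H_1^{D_\infty^{i+1}}(V) \qquad (i \geq 0).
\]
Combining Theorem \ref{cohomologycoh} with the formula $H^i(\C^\dt_\infty V) \cong H_1^{D_\infty}(D_\infty^{i+1} V)$ recorded in Definition \ref{infcomplex} gives $H^i_\mi(V) \cong H_1^{D_\infty}(D_\infty^i V)$ for $i \geq 1$. Because $V$ is degree-wise coherent, Proposition \ref{infdervdervprop}, part (3), forces $H_1^{D_\infty^b}(V)$ to have finite degree, and consequently $\Sigma_\infty H_1^{D_\infty^b}(V) = 0$; this makes the sequence (\ref{exactder}) collapse to a short exact sequence exhibiting $H_1^{D_\infty}(D_\infty^b V) \cong H_1^{D_\infty^{b+1}}(V)$. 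The case $i = 0$ is the tautology $H^0_\mi(V) = V_T = H_1^{D_\infty}(V)$.

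Granted the display, the right-hand side of the depth formula is $\inf\{b \mid H_1^{D_\infty^{b+1}}(V) \neq 0\}$, and what remains is to match this with the finite-derivative depth $\inf\{b \mid H_1^{D^{b+1}}(V) \neq 0\}$. The two-row Grothendieck spectral sequence argument from the proof of Theorem \ref{altchar}, applied to both $D^{b+1}=D\circ D^b$ and $D_\infty^{b+1}=D_\infty\circ D_\infty^b$, identifies $H_1^{D^{b+1}}(V) \cong (D^b V)_T$ and $H_1^{D_\infty^{b+1}}(V) \cong (D_\infty^b V)_T$ in the degree-wise coherent setting. So the remaining equality reduces to the claim that $(D^b V)_T$ and $(D_\infty^b V)_T$ vanish simultaneously, which I propose to establish by induction on $b$: the canonical colimit map $D^b V \to D_\infty^b V$ pushes torsion forward, and Theorem \ref{nagpal} together with Proposition \ref{infprop}, part (3), shows that the cokernel and kernel of this map are built from $\sharp$-filtered pieces and hence are torsion-free, forcing the two torsion submodules to agree.

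The main obstacle is precisely this last comparison between the finite and infinite derivatives. The difficulty is that a degree-wise coherent $V$ generally has $D_\infty^b V$ not finitely generated, so $D^b V$ and $D_\infty^b V$ differ by substantial $\sharp$-filtered correction terms; one has to verify carefully that these large corrections contribute only to the torsion-free part and therefore do not affect the depth computation. Concretely, the induction step asks one to combine Theorem \ref{nagpal} (stabilization of shifts) with the collapsed sequence (\ref{exactder}) (locating torsion) to rule out stray torsion appearing or disappearing in the passage to the infinite limit.
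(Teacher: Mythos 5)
Your first claim and your identification of local cohomology with the derived functors of $D_\infty$ are fine: the boundedness of $\C_\infty^\dt V$ follows from Proposition \ref{infprop}, and the isomorphism $H^i_\mi(V)\cong H_1^{D_\infty^{i+1}}(V)$ is exactly Theorem \ref{cohomologycoh} combined with the $i=1$ case of Proposition \ref{induction}; the collapse of (\ref{exactder}) you describe is the paper's own argument. The genuine gap is the last step, the passage from $D_\infty$ back to the finite derivative $D=D_1$, which is what actually appears in the definition of $\depth(V)$. Your claimed isomorphism $H_1^{D^{b+1}}(V)\cong (D^bV)_T$ is false in general. The two-column Grothendieck spectral sequence for $D\circ D^b$ gives a short exact sequence $0\to D\bigl(H_1^{D^b}(V)\bigr)\to H_1^{D^{b+1}}(V)\to H_1^{D}(D^bV)\to 0$, and the left-hand term does not die: $D W$ is a quotient of $\Sigma W$, and $\Sigma$ does not annihilate finite-degree modules (for $W$ concentrated in a single degree $d\geq 1$ one has $DW\cong\Sigma W\neq 0$). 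The corresponding term in the infinite setting vanishes only because $\Sigma_\infty$, hence $D_\infty$, kills modules of finite degree --- that is precisely the feature Proposition \ref{induction} exploits and that $D$ lacks. Moreover $H_1^D(W)=\ker(W\to\Sigma W)$ is in general a proper submodule of $W_T$ (an element may be killed only by longer transition maps), so even the right-hand term is not $(D^bV)_T$, although the two do vanish simultaneously.

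Because of this, your reduction of the depth formula to the claim that $(D^bV)_T$ and $(D_\infty^bV)_T$ vanish simultaneously is not justified; and that claim is itself only asserted, not proved: neither Theorem \ref{nagpal} nor Proposition \ref{infprop} says anything about the kernel and cokernel of the canonical comparison map $D^bV\to D_\infty^bV$, and ``built from $\sharp$-filtered pieces, hence torsion free'' is exactly the point requiring proof --- it is, in effect, equivalent to the equality you are trying to establish, as you acknowledge when you call it the main obstacle. A repaired argument along your lines would have to run an induction below the depth, using $H_1^{D^{b'}}(V)=0$ for $b'\leq b$ to kill the extra term $D\bigl(H_1^{D^b}(V)\bigr)$, and would still need an honest proof of the finite/infinite torsion comparison. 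Note that the paper takes a different route entirely: it imports the proofs of \cite{LR}, where depth is also characterized by the vanishing of suitable $\Ext$-groups (see the remark following the definition of depth), and the equality with the least nonvanishing local cohomology degree is deduced from Theorem \ref{cohomologycoh} together with the acyclicity statement Proposition \ref{acyclic}; Theorems \ref{abelian} and \ref{nagpal} are what make those arguments valid for degree-wise coherent modules.
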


\begin{definition}\label{dim}
Let $V$ be a degree-wise coherent module which is not $\sharp$-filtered. Then Corollary \ref{dimension} implies that there is a largest $i$ for which $H^i_\mi(V) \neq 0$. We define the \textbf{dimension} of the module $V$ to be the quantity,
\[
\dim_{\FI_G}(V) := \sup\{i \mid H^i_\mi(V)\}.
\]
If $V$ is $\sharp$-filtered, then we set $\dim_{\FI_G}(V) = \infty$.\\
\end{definition}

\begin{corollary}
Let $V$ be a degree-wise coherent module. Then,
\[
N(V) = \max_i\{\deg(H^i_\mi(V))\} + 1,
\]
whenever $V$ is not $\sharp$-filtered.\\
\end{corollary}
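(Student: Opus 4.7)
The plan is to rewrite the condition that $\Sigma_b V$ be $\sharp$-filtered purely in terms of the local cohomology of $V$, and then read off the smallest admissible $b$. The first step is to show that local cohomology commutes with finite shifts: for every $b,i \geq 0$ there is a natural isomorphism $H^i_\mi(\Sigma_b V) \cong \Sigma_b H^i_\mi(V)$. By Theorem \ref{cohomologycoh} it is enough to produce an isomorphism of complexes $\C_\infty^\dt(\Sigma_b V) \cong \Sigma_b\, \C_\infty^\dt V$ and pass to cohomology. At the level of the constituents $\Sigma_\infty$ and $D_\infty$, the required commutation with $\Sigma_b$ is immediate from the definitions of these functors as filtered colimits of the functors $\Sigma_a$, together with exactness of $\Sigma_b$ and the fact that the structure maps defining the limits are themselves built out of standard transition maps. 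Exactness of $\Sigma_b$ then lets us commute it through cohomology of the complex.

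Next I would record the characterization that a degree-wise coherent module $W$ is $\sharp$-filtered if and only if $H^i_\mi(W) = 0$ for every $i \geq 0$. The forward direction combines Proposition \ref{acyclic} (giving vanishing for $i \geq 1$) with the observation that $\sharp$-filtered modules are torsion free, so $H^0_\mi(W) = W_T = 0$. For the converse, the earlier Corollary guarantees an exact sequence $0 \to W_T \to W \to W_F \to 0$ with $W_F$ being $\sharp$-filtered whenever the higher local cohomology of $W$ vanishes; the additional hypothesis $H^0_\mi(W) = W_T = 0$ then forces $W = W_F$.

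Combining the two ingredients, $\Sigma_b V$ is $\sharp$-filtered if and only if $\Sigma_b H^i_\mi(V) \cong H^i_\mi(\Sigma_b V) = 0$ for all $i \geq 0$, which is equivalent to $\deg(H^i_\mi(V)) < b$ for every $i$. The smallest $b$ satisfying this inequality is precisely $\max_i\{\deg(H^i_\mi(V))\} + 1$, yielding the asserted formula for $N(V)$. The hypothesis that $V$ is not $\sharp$-filtered guarantees that at least one module $H^i_\mi(V)$ is nonzero, so this maximum is a finite nonnegative integer rather than $-\infty$. The main technical point to verify is the identification $\C_\infty^\dt(\Sigma_b V) \cong \Sigma_b\, \C_\infty^\dt V$: once this diagrammatic compatibility is set up correctly, every remaining step is a formal consequence of results already established in the paper.
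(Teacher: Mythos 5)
Your argument is correct, and it is worth noting that the paper itself never writes a proof of this corollary: it is stated as one of the consequences of Theorem \ref{cohomologycoh}, with the justification that the arguments of \cite{LR} carry over verbatim once one knows that $\FI_G\Mod^{coh}$ is abelian and that Theorem \ref{nagpal} holds there. Your route is a self-contained derivation from results actually stated in this paper: the characterization ``$W$ degree-wise coherent is $\sharp$-filtered if and only if $H^i_\mi(W)=0$ for all $i\geq 0$'' (Proposition \ref{acyclic}, torsion-freeness of $\sharp$-filtered modules, and the corollary on local cohomology acyclics) together with the commutation $H^i_\mi(\Sigma_b V)\cong \Sigma_b H^i_\mi(V)$, after which the identity $N(V)=\max_i\{\deg(H^i_\mi(V))\}+1$ is pure bookkeeping, with finiteness of the maximum coming from Theorem \ref{cohomologycoh} and Corollary \ref{dimension}, and nonvanishing from the hypothesis that $V$ is not $\sharp$-filtered. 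This is a perfectly good proof and arguably more transparent than a citation.

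The one place where your write-up is too quick is precisely the point you flag: the commutation of $\Sigma_\infty$ (and hence $D_\infty$ and $\C^\dt_\infty$) with $\Sigma_b$ is not literally ``immediate from the definitions.'' The directed systems computing $\Sigma_\infty(\Sigma_b V)$ and $\Sigma_b(\Sigma_\infty V)$ have the same terms ($V_{n+a+b}$ in degree $n$) but different structure maps: in the first system the standard inclusion is extended over the $b$ appended points, so the injection $[n+a+b]\rightarrow [n+a+b+1]$ omits $n+a+1$ rather than the top element. One must therefore construct the isomorphism of systems by conjugating with the block-swap permutations interchanging the $a$ appended points and the $b$ appended points, and check that these commute with the structure maps, with the $\FI_G$-action, and with the canonical maps $\tau\colon W\rightarrow \Sigma_\infty W$ (the last compatibility is what identifies $D_\infty\Sigma_b$ with $\Sigma_b D_\infty$ and makes the comparison of complexes $\C^\dt_\infty(\Sigma_b V)\cong \Sigma_b\,\C^\dt_\infty V$ go through). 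This verification does work, so there is no gap in substance, but it should be written out. Alternatively, you can avoid the complex altogether: check directly that $(\Sigma_b V)_T=\Sigma_b(V_T)$, i.e.\ $H^0_\mi\Sigma_b\cong \Sigma_b H^0_\mi$, and then propagate to higher $i$ by a $\delta$-functor argument or by induction using the sequences $0\rightarrow V_T\rightarrow V\rightarrow V_F\rightarrow 0$ and $0\rightarrow V_F\rightarrow \Sigma_\infty V_F\rightarrow D_\infty V_F\rightarrow 0$, since $\Sigma_b$ is exact and preserves torsion modules and $\sharp$-filtered modules.
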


\begin{corollary}\label{regconj}
Let $V$ be a degree-wise coherent module. Then,
\[
\reg(V) \leq \max_i\{\deg(H^i_\mi(V)) + i\}.
\]
\text{}\\
\end{corollary}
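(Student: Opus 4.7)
The plan is to run the functorial complex $\C^\dt_\infty V$ of Definition \ref{infcomplex} through a chase of long exact sequences in homology, reducing the problem to a regularity bound for torsion modules. This is essentially the strategy of \cite[Theorem F]{LR}, now available for arbitrary degree-wise coherent $V$ without hypotheses on $G$ or $k$ thanks to Theorems \ref{abelian} and \ref{nagpal}.

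First I would decompose
\[
\C^\dt_\infty V : 0 \to V \to F^0 \to F^1 \to \cdots \to F^n \to 0
\]
into short exact sequences. Setting $Z^i := \ker(F^i \to F^{i+1})$, $B^i := \im(F^i \to F^{i+1})$, and $V_F := \im(V \to F^0)$, Theorem \ref{cohomologycoh} identifies the cohomology of the complex with the shifted local cohomology. Concretely, $\C^\dt_\infty V$ is assembled from $0 \to H^0_\mi(V) \to V \to V_F \to 0$, $0 \to V_F \to Z^0 \to H^1_\mi(V) \to 0$, $0 \to Z^i \to F^i \to B^i \to 0$, and $0 \to B^{i-1} \to Z^i \to H^{i+1}_\mi(V) \to 0$ for $i \geq 1$, terminating with $0 \to B^{n-1} \to F^n \to H^{n+1}_\mi(V) \to 0$.

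Next I would run $H_\bullet$ on each piece. Because every $F^i$ is $\sharp$-filtered, Theorem \ref{homacyclic} gives $H_j(F^i) = 0$ for all $j \geq 1$, so each sequence containing an $F^i$ collapses to the dimension-shift isomorphisms $\hd_j(Z^i) = \hd_{j+1}(B^i)$ together with $\hd_j(B^{n-1}) \leq \hd_{j+1}(H^{n+1}_\mi(V))$. Chaining the remaining long exact sequences inward from the right end of the complex yields, for every $j \geq 1$,
\[
\hd_j(V) \leq \max_{\ell \geq 0}\bigl\{\hd_{j+\ell}(H^\ell_\mi(V))\bigr\}.
\]
Each $H^\ell_\mi(V)$ is torsion and degree-wise coherent by Theorem \ref{cohomologycoh}, so invoking a bound of the form $\hd_i(T) \leq \deg(T) + i$ for torsion degree-wise coherent $T$ transforms this into $\hd_j(V) - j \leq \max_\ell\{\deg(H^\ell_\mi(V)) + \ell\}$, which is the claimed inequality.

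The main obstacle is the torsion regularity bound $\hd_i(T) \leq \deg(T) + i$. A direct application of Theorem \ref{finreg} yields only $\hd_i(T) \leq 2\deg(T) + i$, which is too weak to recover the corollary as stated. I would establish the sharper bound by induction on $\deg(T)$: first filter $T$ by the submodules generated in successively larger degrees to reduce to the case where $T$ is concentrated in a single degree $d$, and then construct an explicit Koszul-type $\sharp$-filtered resolution of such a module whose term in homological position $i$ is generated in degree $d + i$. It is this last step, controlling the syzygies of a single-degree torsion module uniformly in $G$ and $k$, that carries all the technical weight of the proof.
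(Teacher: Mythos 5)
Your argument is essentially the paper's: the corollary is proved there by invoking the proof of \cite[Theorem F]{LR}, which is exactly your chase through the short exact sequences obtained by splicing the $\sharp$-filtered complex of Theorem \ref{cohomologycoh}, using homology-acyclicity of $\sharp$-filtered modules (Theorem \ref{homacyclic}) to dimension-shift, and finishing with the bound $\hd_i(T) \leq \deg(T) + i$ for torsion modules. The only divergence is that you propose to re-derive that torsion bound via a filtration and a Koszul-type $\sharp$-filtered resolution; this is not needed, since it is already available in the literature the paper leans on (Gan--Li \cite{GL} for $\FI$, with the $\FI_G$ versions in \cite{L2} and \cite{LR}), although your sketch does follow the standard proof of that result.
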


The reader might have noticed that Corollary \ref{regconj} looks very similar to a classic result from the local cohomology theory of the polynomial ring. Indeed, it is the belief of the author that the following is true.\\

\begin{conjecture}
Let $V$ be a degree-wise coherent module. Then,
\begin{eqnarray}
\reg(V) = \max_i\{\deg(H^i_\mi(V)) + i\} \label{regconjj}
\end{eqnarray}
\end{conjecture}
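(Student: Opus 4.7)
The plan is to establish the missing inequality $\reg(V) \geq \max_i\{\deg(H^i_\mi(V)) + i\}$; the reverse is already Corollary \ref{regconj}. My approach is modelled on the classical Eisenbud--Goto proof in commutative algebra, using the functorial complex $\C^\dt_\infty V$ of Definition \ref{infcomplex} in place of a minimal injective resolution.

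Concretely, I would apply the left-derived homology functor $H_\bullet$ termwise to the truncated complex $G^\dt := (F^0 \to F^1 \to \ldots \to F^n)$, whose cohomologies realise $H^{j+1}_\mi(V)$ by Theorem \ref{cohomologycoh}. Since each $F^i$ is $\sharp$-filtered and hence homology-acyclic by Theorem \ref{homacyclic}, one of the two hyperhomology spectral sequences of $H_\bullet(G^\dt)$ collapses at $E_1$ to the ordinary homology of the complex $H_0(G^\dt)$, while the other has $E_2^{p,q} = H_p(H^{q+1}_\mi(V))$. Splicing in the map $V \to G^\dt$, whose mapping cone also accounts for $H^0_\mi(V)$, the aim is to obtain a convergent spectral sequence with $E_2^{p,q} = H_p(H^q_\mi(V))$ abutting to an appropriate shift of $H_\bullet(V)$. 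A non-zero $E_\infty^{p,q}$ contribution in the top degree of $H^q_\mi(V)$ would then force $\hd_j(V)$ to realise the conjectured lower bound in the correct degree.

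A key auxiliary input, needed for reading the correct degrees off of this spectral sequence, is the claim that for any degree-wise coherent torsion module $T$, one has $\reg(T) = \deg(T)$, with $\hd_i(T)$ concentrated in degree $\deg(T) + i$. I expect this can be settled by explicit computation of projective resolutions of the generating objects $M(r)/\mi^n M(r)$, which together span the torsion subcategory. The more serious difficulty, however, is to rule out cancellation of these top-degree contributions by higher differentials in the spectral sequence. In the polynomial-ring setting such non-cancellation is guaranteed by minimality of the free resolution combined with graded Nakayama's lemma, ensuring that extremal generators in each homological degree survive to $E_\infty$. No comparable theory of minimal presentations currently exists in $\FI_G\Mod^{coh}$, and so the decisive step will be the development, or at least a workable substitute, of a notion of minimality appropriate to the $\FI_G$ setting. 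This is, in my view, the principal reason the conjecture has resisted a proof so far, and is where the main new idea will need to be injected.
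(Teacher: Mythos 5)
The statement you are trying to prove is not proven in the paper at all: it appears there only as a conjecture, with the remark that it is known solely for torsion modules by Gan and Li \cite{GL}, so there is no proof of record to compare your argument against. Your proposal, as you yourself acknowledge in its final sentences, is likewise not a proof but a program. The inequality $\reg(V) \leq \max_i\{\deg(H^i_\mi(V)) + i\}$ is already Corollary \ref{regconj}; everything hinges on the reverse bound, and your hyperhomology construction reduces that bound precisely to the assertion that the extremal classes appearing on the page $E_2^{p,q} = H_p(H^q_\mi(V))$ in top internal degree are not killed by higher differentials (nor by the edge comparison with $H^0_\mi(V)$ when you splice in $V \rightarrow F^0$). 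That non-cancellation statement is essentially equivalent to the conjecture itself: the paper provides no theory of minimal presentations or resolutions in $\FI_G\Mod^{coh}$, no graded Nakayama-type lemma, and no other mechanism that would force survival to $E_\infty$. Without that ingredient the spectral sequence yields only upper bounds of the kind already established (Theorem \ref{finreg}, Corollary \ref{regconj}), so the decisive step of your plan is exactly the open problem, and the proposal does not close it.

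Two smaller corrections to the supporting claims. First, your auxiliary statement that a degree-wise coherent torsion module $T$ has $\hd_i(T)$ \emph{concentrated} in degree $\deg(T)+i$ is too strong: a direct sum of torsion modules of different degrees already violates it. What is true, and is the Gan--Li result the paper cites, is $\reg(T) = \deg(T)$, i.e.\ $\hd_i(T) - i \leq \deg(T)$ for all $i \geq 1$ with equality attained for some $i$; the upper-bound half is what you need to read degrees off the $E_2$ page, so this input is available, but it does not by itself produce the surviving witness class. Second, when you collapse the first hyperhomology spectral sequence using acyclicity of the $\sharp$-filtered terms (Theorem \ref{homacyclic}), the abutment is the homology of $H_0(F^\dt)$, which differs from $H_\bullet(V)$ because the augmentation $V \rightarrow F^\dt$ is not a quasi-isomorphism; its cone carries all the local cohomology, including $H^0_\mi(V)$, so the comparison must be made at the level of the cone with the attendant degree shift. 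This bookkeeping is routine, but it is worth stating carefully, since the conjectured equality is sensitive to exactly such shifts (note the paper's convention of omitting $\hd_0$ from $\reg$, without which the statement is false for $\sharp$-filtered modules).
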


\begin{remark}
Note that the above conjecture would be false if our definition of $\reg(V)$ included the $0$-th homological degree. Indeed, Proposition \ref{acyclic} implies that $\sharp$-filtered modules are torsion free acyclics with respect to local cohomology, and therefore the right hand side of (\ref{regconjj}) is always $-\infty$ for $\sharp$-filtered modules.\\
\end{remark}

Note that as of the writing of this paper, not much is known about this conjecture. It was shown to be true for torsion modules by Liang Gan, and Li in their paper \cite{GL}.\\

To finish this section, we more closely examine the relationship between the infinite derivative and local cohomology.\\

\begin{proposition} \label{induction}
Let $H_i^{D_\infty^b}:\FI_G\Mod^{coh} \rightarrow \FI_G\Mod^{coh}$ denote the $i$-th derived functor of $D_\infty^b$. Then for all $i,b\geq 1$ there are natural isomorphisms of functors
\[
H^{D_\infty^{b}}_{i} \cong H^{D_\infty^{b-1}}_{i-1} \cong \ldots \cong H^{D_\infty^{b-i+1}}_{1} \cong H^{D_\infty}_{1} \circ D_\infty^{b-i}.
\]
\text{}\\
\end{proposition}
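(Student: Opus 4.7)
The plan is to prove, via the Grothendieck spectral sequence for the composition $D_\infty^b = D_\infty \circ D_\infty^{b-1}$, the two base isomorphisms $H_i^{D_\infty^b}(V) \cong H_{i-1}^{D_\infty^{b-1}}(V)$ for $i \geq 2$ together with $H_1^{D_\infty^b}(V) \cong H_1^{D_\infty}(D_\infty^{b-1} V)$. Iterating the former and capping with the latter then produces the full chain of natural isomorphisms.

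The essential preliminary is that $D_\infty$ has cohomological dimension at most one, i.e.\ $H_i^{D_\infty}(V) = 0$ for all $i \geq 2$. Writing $D_\infty = \varinjlim_a D_a$, each $D_a$ has cohomological dimension one by Proposition \ref{dervdervprop}(3) (applied with $b = 1$). Since filtered colimits of abelian groups are exact and therefore commute with the formation of homology, applying $D_\infty$ to a projective resolution of $V$ yields $H_i^{D_\infty}(V) \cong \varinjlim_a H_i^{D_a}(V) = 0$ for $i \geq 2$.

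With this in hand, I would form the Grothendieck spectral sequence
\[
E_2^{p, q} = H_p^{D_\infty}\bigl(H_q^{D_\infty^{b-1}}(V)\bigr) \Rightarrow H_{p+q}^{D_\infty^b}(V).
\]
Its hypotheses hold because $\sharp$-filtered modules are $D_\infty^c$-acyclic for every $c \geq 1$ by Proposition \ref{infdervdervprop}(2) and are preserved under $D_\infty^{b-1}$ by Proposition \ref{infprop}(3); thus a $\sharp$-filtered resolution is carried by $D_\infty^{b-1}$ into $D_\infty$-acyclics, which is precisely the Grothendieck condition. By the preliminary, only the columns $p = 0$ and $p = 1$ contain nonzero entries. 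For $q \geq 1$, Proposition \ref{infdervdervprop}(3) says that $H_q^{D_\infty^{b-1}}(V)$ has finite degree and is therefore torsion; since $D_\infty$ annihilates torsion modules (as follows from Proposition \ref{infprop}(2) together with $\Sigma_\infty$ vanishing on bounded modules) while $H_1^{D_\infty}$ restricted to torsion modules is the identity, we find $E_2^{0, q} = 0$ and $E_2^{1, q} = H_q^{D_\infty^{b-1}}(V)$ for $q \geq 1$, together with $E_2^{0, 0} = D_\infty^b V$ and $E_2^{1, 0} = H_1^{D_\infty}(D_\infty^{b-1} V)$.

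Because column $p = 0$ vanishes above row $q = 0$ and only two columns are nonzero, every higher differential $d_r$ is forced to be zero, so the sequence degenerates at $E_2$. Reading off the induced filtration on $H_n^{D_\infty^b}(V)$ gives the base relations $H_1^{D_\infty^b}(V) \cong H_1^{D_\infty}(D_\infty^{b-1} V)$ and $H_i^{D_\infty^b}(V) \cong H_{i-1}^{D_\infty^{b-1}}(V)$ for $i \geq 2$, which is exactly what the statement requires. I expect the main technical obstacle to be the careful setup of the Grothendieck spectral sequence inside $\FI_G\Mod^{coh}$: one must verify that $\sharp$-filtered (or free/projective) resolving objects are plentiful enough in this subcategory and that the filtered-colimit computation of the derived functors of $D_\infty$ used in the preliminary step is compatible with the derivation process, both of which ultimately rely on the preservation properties of $\Sigma_\infty$ and $D_\infty$ collected in Propositions \ref{infprop} and \ref{infdervdervprop}.
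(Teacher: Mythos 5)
Your proposal is correct and follows essentially the same route as the paper: the Grothendieck spectral sequence for $D_\infty \circ D_\infty^{b-1}$, which collapses to two columns, with the finite-degree (hence torsion) nature of the modules $H_q^{D_\infty^{b-1}}(V)$ killing the $D_\infty$-column and making $H_1^{D_\infty}$ act as the identity, followed by iteration. Your explicit verification that $H_i^{D_\infty} = 0$ for $i \geq 2$ via the filtered colimit $D_\infty = \varinjlim_a D_a$ and Proposition \ref{dervdervprop}(3) is a welcome justification of a point the paper's proof cites only tersely.
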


\begin{proof}
Consider the Grothendieck spectral sequence associated to the composition $D_\infty \circ D^{b-1}_\infty$. Note that Proposition \ref{infdervdervprop} implies that $H^{D_\infty}_i(V) = 0$ for all $i > 1$, and all degree-wise coherent modules $V$, and therefore this spectral sequence only has two columns. The spectral sequence will therefore degenerate to the collection of short exact sequences
\[
0 \rightarrow D_\infty H^{D_\infty^{b-1}}_i(V) \rightarrow H^{D_\infty^{b}}_i(V) \rightarrow H^{D_\infty}_1(H^{D_\infty^{b-1}}_{i-1}(V)) \rightarrow 0.
\]
Proposition \ref{infdervdervprop} tells us that $H^{D_\infty^{b-1}}_i(V)$ has finite degree, and therefore the left most term in these exact sequences is always zero. This same proposition also implies that $H^{D_\infty}_1(H^{D_\infty^{b-1}}_{i-1}(V)) \cong H^{D_\infty^{b-1}}_{i-1}(V)$ whenever $i > 1$. Naturality of the isomorphisms $H^{D_\infty^{b}}_i(V) \cong H^{D_\infty^{b-1}}_{i-1}(V)$ follows from the naturality of the Grothendieck spectral sequence. The result now follows by induction.\\
\end{proof}

Theorem \ref{cohomologycoh} tells us that the local cohomology modules of a degree-wise coherent module $V$ can be computed as the torsion submodules of the infinite derivatives of $V$. Proposition \ref{induction} directly relates these torsion modules to the derived functors of these infinite derivatives. Putting everything together, we have proven the following theorem. One may think of this as a kind of ``local duality,'' as it relates the local cohomology functors to the derived functors of some right exact functor.\\

\begin{theorem}
Let $V$ be a degree-wise coherent module of dimension $d$. Then there are isomorphisms for all $i \geq 1$,
\[
H_i^{D_\infty^{d+1}}(V) \cong H^{d+1-i}_\mi(V).
\]
\text{}\\
\end{theorem}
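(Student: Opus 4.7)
The plan is to chain Proposition \ref{induction} with the local-cohomology interpretation of the functorial complex $\C_\infty^\dt V$ from Theorem \ref{cohomologycoh} and Definition \ref{infcomplex}. Both sides of the claimed isomorphism should collapse to the torsion submodule of an iterated infinite derivative of $V$, so the proof reduces to recognizing that these two collapses produce the same module.

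First I would apply Proposition \ref{induction} at $b = d+1$: for each $i$ with $1 \leq i \leq d+1$ it supplies a natural isomorphism
\[
H_i^{D_\infty^{d+1}}(V) \;\cong\; H_1^{D_\infty}\bigl(D_\infty^{d+1-i}V\bigr).
\]
Proposition \ref{infdervdervprop}(1) realizes $H_1^{D_\infty}(W)$ as the kernel of the canonical map $W \to \Sigma_\infty W$, and Proposition \ref{infprop}(2) identifies this kernel with the torsion submodule $W_T = H_\mi^0(W)$. So the left-hand side rewrites as $H_\mi^0\bigl(D_\infty^{d+1-i}V\bigr)$.

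For the right-hand side I would run the same reduction in reverse. Theorem \ref{cohomologycoh} gives $H_\mi^j(V) \cong H^{j-1}(\C_\infty^\dt V)$ for $j \geq 1$, and the cohomology formula built into Definition \ref{infcomplex} identifies $H^{j-1}(\C_\infty^\dt V) \cong H_1^{D_\infty}(D_\infty^j V) = H_\mi^0(D_\infty^j V)$; the case $j = 0$ is the definition of $H_\mi^0$, under the convention $D_\infty^0 = \mathrm{id}$. Specializing to $j = d+1-i$ and combining with the previous paragraph yields the claimed isomorphism for $1 \leq i \leq d+1$.

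The remaining range $i \geq d+2$ should be handled by extending the chain of isomorphisms in Proposition \ref{induction} one step further, which collapses $H_i^{D_\infty^{d+1}}(V)$ into some $H_j^{D_\infty}(W)$ with $j \geq 2$; such higher terms vanish because the canonical exact sequence in Proposition \ref{infdervdervprop}(1) has no continuation past $H_1^{D_\infty}$. The matching right-hand side is zero by Definition \ref{dim} together with the dimension hypothesis $\dim_{\FI_G}(V) = d$. The main obstacle, if there is one, is just careful indexing at the two endpoints $i = 1$ and $i = d+1$: at $i = d+1$ one must land in $H_\mi^0(V) = V_T$ via the $j=0$ identification, and at $i = 1$ one must identify $H_1^{D_\infty}(D_\infty^d V)$ with the top local cohomology $H_\mi^d(V)$, which requires the chain of Grothendieck spectral sequences in Proposition \ref{induction} to run all the way down without collision. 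Modulo this bookkeeping, the theorem is an essentially immediate concatenation of Proposition \ref{induction}, Proposition \ref{infdervdervprop}, and Theorem \ref{cohomologycoh}.
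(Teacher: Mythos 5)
Your argument is correct and follows essentially the same route as the paper: it concatenates Proposition \ref{induction} (at $b=d+1$) with the identification $H^{j-1}(\C_\infty^\dt V)\cong H_1^{D_\infty}(D_\infty^{j}V)$ from Definition \ref{infcomplex} and Theorem \ref{cohomologycoh}, your phrasing via $H^0_\mi(D_\infty^{d+1-i}V)$ being just a relabeling of the same identifications. The extra remarks on the range $i\geq d+2$ and the endpoint $j=0$ are consistent with the conventions already in place and do not change the argument.
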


\begin{proof}
Theorem \ref{cohomologycoh} and Proposition \ref{induction} imply
\[
H_i^{D_\infty^{d+1}}(V) \cong H_1^{D_\infty}(D_\infty^{d+1-i}V) \cong H^{d-i}(\C_\infty^\dt V) \cong H^{d+1-i}_\mi(V).
\]
\text{}\\
\end{proof}

\end{document}